\numberwithin{equation}{section}
\newtheorem{thm}{Theorem}
\newtheorem{lem}{Lemma}
\newtheorem{prop}{Proposition}
\newtheorem{rem}{Remark}
\newtheorem{cor}{Corollary} 
\newtheorem{defi}{Definition} 
\newcommand{\veps}{\varepsilon}
\newcommand{\Le}{\mathcal{L}}
\newcommand{\R}{\mathbb{R}}
\newcommand{\N}{\mathbb{N}}
\newcommand{\Z}{\mathbb{Z}}
\newcommand{\dd}{\textup{d}}
\DeclareMathOperator*{\esssup}{ess\,sup}
\DeclareMathOperator*{\essinf}{ess\,inf}
\title{Numerical methods and regularity properties for viscosity solutions of nonlocal in space and time diffusion equations
}
\author{F\'elix del Teso\thanks{Departamento de Matem\'aticas, Universidad Aut\'onoma de Madrid, Spain, felix.delteso@uam.es}, \; \L ukasz P\l ociniczak\thanks{Faculty of Pure and Applied Mathematics, Wroc{\l}aw University of Science and Technology, Poland, lukasz.plociniczak@pwr.edu.pl}}
\date{}
\begin{document}
\maketitle

\begin{abstract}
We consider a  general family of  nonlocal in space and time diffusion equations with space-time dependent diffusivity and prove convergence of finite difference schemes in the context of viscosity solutions under very mild conditions. The proofs, based on regularity properties and compactness arguments on the numerical solution, allow to inherit a number of interesting results for the limit equation. More precisely, assuming H\"older regularity only on the initial condition, we prove convergence of the scheme, space-time H\"older regularity of the solution depending on the fractional orders of the operators, as well as specific blow up rates of the first time derivative. Finally, using the obtained regularity results, we are able to prove orders of convergence of the scheme in some cases. These results are consistent with previous studies. The schemes'  performance is further numerically verified using both constructed exact solutions and realistic examples. Our experiments show that multithreaded implementation yields an efficient method to solve nonlocal equations numerically. 
\end{abstract}

\textbf{Keywords: } anomalous diffusion, viscosity solution, Caputo derivative, nonlocal diffusion, L1 method, numerical scheme\\

\textbf{AMS Classification:} 35D40, 35R11, 65M06

\section{Introduction}
The goal of this paper is to propose convergent schemes in the context of viscosity solutions for the following class of space-time nonlocal equations with space-time dependent diffusivity,
\begin{equation}\label{eq:main}
\partial_t^\alpha u(x,t)= D(x,t)\Le^{\mu,\sigma} [u] (x,t), \quad x\in \R^d, \quad t\in(0,T),
\end{equation}
with  suitable continuous initial condition $u(x,0)=u_0(x)$. Now, we introduce every term in equation (\ref{eq:main}). 

\emph{Time derivative}. The operator $\partial_t^\alpha$ in \eqref{eq:main} is the Caputo derivative of order $\alpha\in(0,1)$ defined for a smooth function $\phi:\R_+\to\R$ as 
\begin{equation}\label{eq:capdef}
\partial_t^\alpha \phi(t)=\frac{1}{\Gamma(1-\alpha)}\frac{\phi(t)-\phi(0)}{t^{\alpha}}+\frac{\alpha}{\Gamma(1-\alpha)}\int_0^t \frac{\phi(t)-\phi(s)}{(t-s)^{1+\alpha}} \dd s.
\end{equation}
\begin{rem}
A simple integration by parts in \eqref{eq:capdef} recovers the usual definition of Caputo derivative given by
\[
\partial_t^\alpha \phi(t)= \frac{1}{\Gamma(1-\alpha)}\int_0^t \frac{\phi'(s)}{(t-s)^\alpha} \dd s.
\]
In this work we prefer to deal with \eqref{eq:capdef} since it naturally requires less regularity on $\phi$ in order to be well defined and it preserves the monotonicity properties that will be needed in the context of viscosity solutions.
\end{rem}
\emph{Diffusion operator.} The operator $\Le^{\mu,\sigma}$ belongs to the class of L\'evy type diffusion operators defined for smooth $\psi:\R^d\to \R$ as
\[
\Le^{\mu,\sigma}= L^\sigma + \Le^\mu,
\]
where the local and nonlocal parts are given by
\begin{equation}
L^\sigma[\psi](x):=\textup{tr}(\sigma \sigma^T D^2\phi(x))= \sum_{i=1}^P \partial_{\sigma_i}^2\psi(x), \quad \textup{where} \quad \partial_{\sigma_i}:=\sigma_i\cdot \nabla,
\end{equation}
\begin{equation}\label{eq:levydef2}
\Le^\mu [\psi](x):= \int_{|z|>0} \left( \psi(x+z)+\psi(x-z)-2\psi(x)\right)\dd \mu(z),
\end{equation}
where $\sigma=(\sigma_1,\ldots,\sigma_P)\in \R^{d\times P}$, $P\in \N$ and $\sigma_i\in \R^d$, and $\mu$ is a nonnegative symmetric Radon measure in $\R^d\setminus\{0\}$ satisfying 
\[
\int_{|z|>0}\min\{|z|^2,1\} \dd \mu(z)<+\infty.
\]
This class of diffusion operators coincides with the class of operators of symmetric Lévy processes. Some examples are the classical Laplacian $\Delta$, the fractional Laplacian $-(-\Delta)^s$ with order $s\in (0,1)$, relativistic Schr\"odinger-type operators $m^s Id - (m^2 Id - \Delta)^{s}$ with $m>0$, strongly degenerate operators like $\partial^2_{x_1}$ or $-(-\partial^2_{x_1})^s$, and even discrete operators \cite{dTEnJa17}.

\emph{Diffusivity coefficients.} The diffusivity is  a nonnegative function  $D: \R^d\times \R_+ \to \mathbb{R}_+$. More specific regularity assumptions will be given below in the statement of the results. 

We can give a physical motivation to consider the governing equation \eqref{eq:main}. For example, we can consider a porous medium wetted with some fluid. Moisture imbibition is driven by the flux arising from Darcy's law \cite{bear2013dynamics} while the nonlocal operators $\partial^\alpha_t$ and $\Le^{\mu,\sigma} [\psi](x)$ correspond to the anomalous diffusion effects \cite{metzler2000random}. As was shown in \cite{plociniczak2015analytical}, the emergence of the Caputo fractional derivative is related to the inclusion of the waiting-time phenomenon. According to that, the fluid's evolution may be slowed down as a result of chemical processes or the geometry of the porous medium \cite{El20}. On the other hand, some heterogeneous media exhibit the long-jump phenomena according to which the fluid can be transported for relatively long distances as a result of the complex geometry \cite{plociniczak2019derivation}.  When taking into account both waiting-time and long-jump phenomena and applying mass conservation for Darcy's law with the linear diffusivity, we can obtain the main equation governing the flow
\begin{equation}
\label{eq:Moisture}
\partial^\alpha_t v(x,t) = \nabla \cdot \left(D(x,t)\nabla^{2s-1}v(x,t)\right),
\end{equation}
where $v=v(x,t)$ is the moisture, $D=D(x,t)$ is the diffusivity of the medium, and $\nabla^{2s-1}$ is the fractional gradient operator related to the fractional Laplacian via $\nabla\cdot \nabla^{2s-1} = -(-\Delta)^s$. In the case $d=1$, there exists an attractive connection between \eqref{eq:Moisture} and the main equation \eqref{eq:main}. In realistic situations, the essentially one-dimensional flow frequently occurs in cases where the porous medium has one dimension substantially larger than the others (see \cite{kuntz2001experimental}). To obtain, at least formally, the governing equation \eqref{eq:main}, we can define the cumulative mass function
\[
	u(x,t) = \int_{-\infty}^x v(r,t) \dd r,
\]
where we assume that $v$ has finite mass. Now, integrating \eqref{eq:Moisture} yields
\[
	\partial^\alpha_t u(x,t) = D(x,t) \nabla^{2s-1} v(x,t) = -D(x,t) (-\Delta)^s u(x,t), 
\]
which is \eqref{eq:main} in the case when the nonlocal operator is the fractional Laplacian. In \cite{BiKaMo10,StTeVa16,StdTVa18}, a local in time fractional porous medium equation with nonlocal pressure was investigated with a similar approach based on an integrated problem. Later in \cite{del2023convergent} it was shown that the original solution can be numerically approximated by reconstructing it from the numerical scheme for the integrated variable. As in this paper, the convergence obtained in the integrated variable is locally uniform, while in the original one it has to be understood in the sense of the Rubinstein-Kantorovich distance.

Apart from hydrological applications, nonlocal operators are used to model various memory effects (temporal nonlocality) or long-range forces (spatial nonlocality). One of the most profound examples is the anomalous diffusion of a randomly walking particle for which the mean-squared displacement deviates from the linear in time relation \cite{metzler2000random}. The slower than classical diffusion is called the \textit{subdiffusion} and can frequently be modeled by inclusion of the Caputo derivative in the governing PDE.  Some specific examples include percolation of moisture into building materials (see \cite{chen2013variable} and references therein) and solute transport in soil \cite{pachepsky2000simulating}. A successful use of fractional derivatives in modeling of cell rheology has been conducted in \cite{djordjevic2003fractional}. Actually, one of the first examples of modeling with these nonlocal in time operators has been conducted in viscoelasticity \cite{bagley1983theoretical}. On the other hand, the faster regime is called the \textit{superdiffusion} which is frequently modeled by nonlocal in space operators such as the fractional Laplacian. Examples include protozoa migration \cite{alves2016transient}, single particle tracking in biophysics \cite{Sun17, wong2004anomalous}, plasma physics \cite{Del05}, astrophysics \cite{lawrence1993anomalous}, and financial mathematics \cite{jacquier2020anomalous}. Also, in a very natural way space nonlocal operators are central mathematical objects used in peridynamics, the theory of deformations with discontinuities \cite{diehl2019review}. 

\subsection{Comments on the related results}

In the literature, there are several related studies concerning various diffusion equations with the Caputo derivative. First of all, in \cite{topp2017existence}, the authors considered a general nonlinear diffusion problem and their existence and uniqueness results in the viscosity setting can be applied to our problem \eqref{eq:main} provided the diffusivity is only space dependent, that is, when $D=D(x)$. On the other hand, in \cite{namba2018existence}, the authors developed sufficient conditions for a unique viscosity solution to exist in the case of general nonlinear, but local, diffusion. They used Perron's method and a variant of the comparison principle. An approach was taken in \cite{giga2020discrete} to prove the existence of a viscosity solution through a discrete scheme. A similar technique is also utilized in the present paper. On the other hand, several authors considered weak solutions to PDEs that are related to ours. For example, results about existence and H\"older regularity were obtained in \cite{allen2016parabolic}. However, the uniqueness theorem required time-independence of the coefficient. This assumption was later relaxed in \cite{allen2017uniqueness} in the local in space setting. Another approach to weak solutions with a very general nonlocal in time operator and local diffusion was considered in \cite{wittbold2021bounded}. A generalization to nonlinear equations, such as the time-fractional porous medium equation, was carried out in \cite{akagi2019fractional} (see also \cite{plociniczak2018existence} for a different treatment of self-similar solutions). Apart from these well-posedness results, the long-time behavior of various solutions has been studied, for example, in \cite{dipierro2019decay}, where the spatial operator is allowed to be a general nonlinear local or nonlocal one (see also \cite{vergara2015optimal} for local diffusion but general nonlocal-in-time derivative).

As will be described in the following, we construct our numerical scheme based on the so-called L1 discretization of the Caputo derivative. It was introduced in \cite{oldham1974fractional} to discretize the Riemann-Liouville fractional derivative by means of a piecewise linear approximation. For sufficiently smooth functions, say at least $C^2([0,T])$, the method attains an order $2-\alpha$ (see, for example, \cite{li2019theory}, Theorem 4.1). The best error constant in this case was found in \cite{plociniczak2023linear}. When the time regularity of the differentiated function is lower, the order deteriorates. Typically, solutions to the time-fractional diffusion equation are H\"older continuous in time (see \cite{allen2016parabolic}). For example, it has been proved in \cite{sakamoto2011initial} that, for a local diffusion with constant diffusivity and with the Caputo derivative, equation \eqref{eq:main} is $\alpha$-H\"older continuous. In the present paper, we prove this result for the full generality of our main PDE: nonlocal in space and time along with space and time dependent diffusivity. In that case, the L1 method is globally only of order $\alpha$, what has been shown in \cite{stynes2017error,kopteva2019error} assuming the existence of the second derivative. In the following, we are able to relax this restrictive assumption and prove that for $\alpha\in [1/2,1)$ the global order is always $1-\alpha$ while for $\alpha\in(0,1/2)$ it deteriorates to $\alpha$. However, in the latter case the pointwise order still remains equal to $1-\alpha$. Therefore, our global estimate is the same as in the literature for $\alpha\in(0,1/2)$ but obtained under weaker assumptions. 

The L1 method is very versatile when it comes to discretizing the Caputo derivative. It possesses good monotonicity properties and a variety of Gr\"onwall inequalities (see for example Lemma 2.2 in \cite{liao2018sharp}). However, the latter is usually proved via very technical considerations. In the Appendix \ref{app:Gronwall} we prove a unified Caputo and Riemann-Liouville discrete fractional Gr\"onwall inequality using only elementary techniques and a classical result on integral equations. The literature on applying the L1 method to various subdiffusion problems is vast. From the related results, we mention only those concerning diffusion local in space since, according to our knowledge, the present paper is the first one considering both space and time nonlocal diffusion in the L1 numerical setting. In \cite{kopteva2019error}, the author conducts a thorough analysis of a time-fractional diffusion in which the diffusivity can depend on space. Particularly, it is shown that by choosing a graded mesh one can re-obtain the optimal order of convergence in time, that is $2-\alpha$ for both finite element and finite difference methods in space with nonsmooth initial data. An interesting analysis of the constant diffusivity semilinear case was conduced in \cite{al2019numerical} with operator theory techniques. There, it has been shown that the solution is H\"older continuous and the numerical method has an optimal error estimate. The case of numerical methods for local in space time-fractional diffusion with time-dependent diffusivity is still under vigorous research even in the semi-discrete case of only space discretization. For example, assuming that the initial data can be nonsmooth, the authors of \cite{jin2019subdiffusion,jin2020subdiffusion} prove optimal error bounds for this problem with Galerkin space discretization. The technique here is the perturbation of the purely space-dependent diffusivity equation. In \cite{mustapha2018fem}, a different approach is taken based on an energy method to obtain optimal error estimates. These works, however, assume some very stringent regularity properties on time diffusivity which may seem unnatural in typical situations. This problem has been solved in \cite{plociniczak2022error} using a refinement of the energy method and was extended to the semilinear equation. There, it is required that the diffusivity is only absolutely continuous with respect to time. This approach, although with regular initial data, was also carried out in the quasi-linear time-fractional diffusion equation in \cite{plociniczak2023linear}. A thorough overview of the L1 method applied to the time-fractional diffusion equation with nonsmooth initial data was published in \cite{jin2019numerical}. In the case of self-similar solutions to the time-fractional porous medium equations, there exists an efficient numerical method based on integral equations \cite{plociniczak2019numerical,plociniczak2014approximation}. 

As for numerical methods for local-in-time equations involving general L\'evy type diffusion operators we mention \cite{Dro10,DrJa14,CiJa14,dTEnJa18,dTEnJa19}. A general overview of L\'evy processes can be found in \cite{App09}.

\subsection{Comments on the main results}

The main contributions of this paper can be summarized as follows. See Section \ref{sec:MainResults} for a technical r\'esum\'e. 
\begin{itemize}
\item (\textit{Numerical scheme}) We develop a \textit{general framework} to demonstrate the convergence of a numerical scheme to solve \eqref{eq:main} in the viscosity sense. We allow for an \textit{arbitrary} discretization of the spatial operator, while the time marching is implemented with the L1 method. The convergence of the scheme is proved by showing uniform space-time regularity estimates and using Arzel\`a-Ascoli theorem to ensure the existence of the limit. We further demonstrate that the continuous solution can be understood in the viscosity setting. The only assumption we make is the requirement of H\"older continuity of the \textit{initial data} and not the solution itself. The resulting regularity of the viscosity limit is governed by the orders of differential operators and the regularity of the initial condition. 

\item (\textit{Existence of viscosity solutions}) The numerical scheme converges to a continuous limit and we prove that it is itself the \textit{viscosity solution} of \eqref{eq:main}. In some cases, and under some further assumptions on the discretization of the nonlocal-in-space operator and the regularity of the initial data, we show that the solution can also be understood in the pointwise sense. 
\item (\textit{Regularity of viscosity solutions}) The regularity of the continuous solution is inherited from the discrete level. Specifically, assume that the order of the spatial differential operator is $r\in [0,2]$ and the initial data is $a$-H\"older continuous with $a\in (0,r]$. Then, we prove that the viscosity solution is $\min\{a, a/r\}$-H\"older in space and $\alpha a / r$-H\"older in time. In the following, we state an even more general result for the moduli of continuity. The main point is that our only assumptions concern the \textit{initial data}, which is in contrast with many papers in the literature, where it is usually assumed that the solution to the main PDE possesses a certain smoothness. We prove it \textit{a priori}. We also prove precise blow up rates of the first time derivative at time $t=0$, which are in accordance to previous results in the literature.
\item (\textit{Errors of the scheme}) Assuming only H\"older regularity of the initial data, we are able to find the orders of convergence of the devised numerical scheme. Using the notation of above, and provided that $r\in [0,1)$ and $a\in (r,1]$ we show that the uniform order of convergence in space is $a-r$. Moreover, with a careful and novel analysis of the L1 scheme, we are able to prove that for $\alpha\in [1/2,1)$ the \emph{global} order in time is $1-\alpha$ while for $\alpha\in(0,1/2)$ it becomes $\alpha$. However, in the latter case the \emph{pointwise} order is larger and equal to $1-\alpha$.  For $\alpha\in (0,1/2)$, this result is consistent with the literature for local in space subdiffusion; however, in previous studies, it is generally assumed that the solution to the  PDE is $C^2((0,T])$ in time and $\|\partial^m_t u(t)\| \leq C t^{\alpha-m}$ with $m=0,1,2$. In this paper, we are able to completely \textit{relax this stringent assumption} requiring nothing from the solution itself.  
\end{itemize}

As a technical result stated in the Appendix \ref{app:Gronwall}, we give \textit{an elementary proof} of a discrete fractional Gr\"onwall inequality unified for both the Riemann-Liouville and Caputo derivatives. We prove a very general version of this result that might be utilized in future works within the convergence analysis for nonlinear equations. For the present paper, the main application of the Gr\"onwall inequality lies in the proof of the time regularity of the scheme. According to the strategy of our framework, this regularity is then inherited by the viscosity solution of the main PDE \eqref{eq:main}.  

\subsection{Notation and definitions}
Here, we briefly establish the notation for various function spaces and other objects that will be used in this paper.
\begin{itemize}
\item $Q_T=\R^d\times[0,T]$ is the closure of the domain related to the main equation \eqref{eq:main}.
\item $L^p(\mathbb{R}^d)\equiv$ Lebesgue functions in $\R^d$ with $1\leq p\leq \infty$.
\item $\mathcal{B}(\R^d)\equiv$ Bounded functions on $\R^d$ with pointwise values everywhere in $\R^d$.
\item $BUC(\R^d) \equiv$ Bounded uniformly continuous functions on $\R^d$.
\item $X^r_b(\Omega) \equiv$ Bounded H\"older continuous functions for $r\in(0,2]$ defined on $\Omega \subseteq \mathbb{R}^d$ with the norm
\[
\|\phi\|_{X^r(\Omega)}=\|\phi\|_{L^\infty(\Omega)} + \left\{ \begin{split}
	&\sup_{x,y\in \Omega,\, x\not=y} \frac{|\phi(x)-\phi(y)|}{|x-y|^r}, \hspace{5.35cm} \textup{if}  \quad r\in(0,1],\\
	&\sup_{x,y\in \Omega,\, x\not=y} \frac{|\phi(x)-\phi(y)|}{|x-y|}+ \sup_{x,y\in \Omega,\, x\not=y} \frac{|\nabla\phi(x)-\nabla\phi(y)|}{|x-y|^{r-1}}, \quad \textup{if} \quad r\in(1,2].
\end{split} \right.
\]  
\item $E_{\alpha,\beta}(z)\equiv$ Mittag-Leffler function for $\alpha>0$, $\beta > 0$, and $z\in\mathbb{C}$ is defined as follows 
\begin{equation}\label{eq:Mittag-Leffler}
	E_{\alpha,\beta}(z) = \sum_{k=0}^\infty \frac{z^k}{\Gamma(\alpha k + \beta)}, \quad E_\alpha := E_{\alpha,1}.  
\end{equation}
\end{itemize}

\section{Scheme, assumptions, and main results}
In this section, we introduce discretizations of the operators involved and the scheme, discuss the main assumptions, and formulate the main results of the paper.

\emph{Discretization of the time derivative}. We consider a uniform time grid parametrized by a time step $\tau>0$. More precisely, define $t_n:=\tau n \in \tau \N$. For a function $v: \tau \N\to\R$, we denote $v^n:=v(\tau n)$.
We introduce now the discretization of the Caputo derivative, for which we use the so-called L1 scheme given by
\begin{equation}\label{def:disccap}
\partial_\tau^\alpha v^n:= \frac{\tau^{-\alpha}}{\Gamma(2-\alpha)}\left( v^n - b_{n-1}v^0-\sum_{k=1}^{n-1}(b_{n-k-1}-b_{n-k})v^k\right), \quad \textup{where} \quad b_i:=(i+1)^{1-\alpha}-i^{1-\alpha}.
\end{equation}
\emph{Discretization of the spatial diffusion operator}.  We also consider the space discretization parameter $h>0$ and $z_\beta:=h\beta\in h\Z^d$. For convenience, we defined the class of discretization operators in a general form for functions $\psi:\R^d\to\R$ given by
\begin{equation}\label{eq:SpaceOperatorDiscrete}
L_h[\psi](x)=\sum_{\beta\not=0} \left(\psi(x+z_\beta)+\psi(x-z_\beta)-2\psi(x) \right)\omega_\beta(h).
\end{equation}
\begin{rem}
Note that $L_h$ can be defined for the function $\psi:h\Z^d\to \R$ (denote $\psi_\gamma=\psi(h\gamma)$ for $\gamma\in \Z^d$) as
\[
L_h[\psi]_\gamma= \sum_{\beta\not=0} \left(\psi_{\gamma+\beta}+\psi_{\gamma-\beta}-2\psi_\gamma \right)\omega_\beta(h).
\]
\end{rem}

In order to ensure good properties of the numerical scheme, we need to assume some monotonicity plus boundedness conditions on the weights. First, we need symmetry plus positivity of the weights:
\begin{equation}\label{As:wepos}\tag{$\textup{A1}_\omega$}
w_\beta(h)=w_{-\beta}(h)\geq0, \quad \textup{for all} \quad \beta \in\Z^d, \quad \textup{and} \quad   \sum_{\beta\not=0} \omega_\beta(h)<+\infty. 
\end{equation}
We also need to assume the following uniform boundedness plus moment condition:
\begin{equation}\label{As:webdd}\tag{$\textup{A2}_\omega$}
\textup{There exist $r\in[0,2]$ and $C_{r}>0$ such that} \,  \sup_{h\in(0,1)}\sum_{\beta\not=0} \min\{|\beta h|^r,1\}\omega_\beta(h)<C_{r}.
\end{equation}
As it can be seen in \cite{dTEnJa18,del2023convergent}, this is a natural assumption satisfied for any monotone finite difference discretization present in the literature. In the limit, the above is a condition on the measure $\mu$ in $\Le^\mu$ given in \eqref{eq:levydef2}
\begin{equation}\label{As:Le}\tag{$\textup{A}_\mu$}
\textup{There exist $r\in[0,2]$ and $C_{r}>0$ such that} \,  \int_{|z|>0} \min\{|z|^r,1\}\dd \mu(z)<C_{r}.
\end{equation}

\begin{rem}\label{rem:boundwei}
Note that assumption \eqref{As:wepos} ensures the boundedness of $L_h[\phi]$ for $\phi  \in \mathcal{B}(\R^d)$, i.e.,
\[
\|L_h[\phi]\|_{L^\infty(\R^d)} \leq  4 \| \phi\|_{L^\infty(\R^d)} \sum_{\beta\not= 0}\omega_\beta(h)<+\infty.
\]
Moreover, assumption \eqref{As:webdd} ensures the uniform boundedness in $h$ of $L_h[\phi]$ for $\phi  \in X^r_b(\R^d)$. More precisely,
\[
\begin{split}
	\|L_h[\phi]\|_{L^\infty(\R^d)} &\leq \|\phi\|_{X^r(\R^d)} \sum_{0<|\beta h|<1}|\beta h|^r \omega_\beta(h) + 4 \| \phi\|_{L^\infty(\R^d)} \sum_{|\beta h|\geq 1}\omega_\beta(h)\\
	&\leq C_r (\| \phi\|_{X^r(\R^d)} +4 \| \phi\|_{L^\infty(\R^d)}  ).
\end{split}
\]
\end{rem}

Since we are dealing with generic discretizations of the spatial operator, we need to assume some consistency on the discretization. In particular, we will use the following mild assumption in order to ensure convergence of the numerical scheme in the context of viscosity solutions:
\begin{equation}\label{As:conweak}\tag{$\textup{A1}_{\textup{c}}$}
\textup{For $\phi\in C^2_b(\R^d)$, we have $\|L_h \phi - \Le^{\mu,\sigma}\phi\|_{L^\infty(\R^d)}\leq  \|\phi\|_{C^2_b(\R^d)} o_h(1)$}, \quad \text{as} \quad h\rightarrow 0^+.
\end{equation}
Moreover, under the following more restrictive assumption, we are able to prove the convergence (with orders) of the numerical scheme to classical solutions. Note that this will only work when $r<1$ in assumption \eqref{As:webdd}, that is, when the order of differentiability of the operator is strictly less than one (for example, the fractional Laplacian $-(-\Delta)^s$ for $s<1/2$). More precisely, we assume that
\begin{equation}\label{As:constrong}\tag{$\textup{A2}_{\textup{c}}$}
\textup{If \eqref{As:webdd} holds for $r<1$ and $\phi\in X^a_b(\R^d)$ for  $a\in(r,1]$, then $\|L_h \phi - \Le^{\mu,\sigma}\phi\|_{L^\infty(\R^d)}\leq  \|\phi\|_{X^a_b(\R^d)} O(h^{a-r})$},
\end{equation}
as $h\rightarrow 0^+$. Finally, we denote $D^n(x):= D(x,t_n)$ and $D^n_\beta=D(x_\beta,t_n)$.

\emph{Formulation of the scheme}.  We are now ready to define the numerical scheme associated to our problem. Given an initial condition $U^0=u_0\in C_b(\R^d)$, we define iteratively for $n\geq1$ the function
\begin{equation}\label{eq:NumSch}\tag{S}
\partial^\alpha_\tau U^n(x)= D^n(x) L_h U^n(x), \quad x\in \R^d, \quad t_n:=n\tau \in(0,T].
\end{equation}

\begin{rem}
Note that we can restrict the above scheme to a uniform grid in space to deal with a fully discrete problem of the form 
\[
\partial^\alpha_\tau U^n_\beta= D^n_\beta L_h U^n_\beta, \quad \beta \in \Z^d, \quad t_n:=n\tau \in(0,T].
\]
With this in mind, any result of this paper can be formulated in a continuous or discrete in space form. We will prove them in continuous form for notational convenience.
\end{rem}

\subsection{Main results}\label{sec:MainResults}
We state now our main results. To shorten the presentation, we do not include the most general assumptions here. However, in Sections \ref{sec:PropNumSch} and \ref{sec:CompConv}, the results are stated and proved step by step only using the necessary assumptions for the proofs.

First, we state the result of well-posedness and properties of the numerical scheme.

\begin{thm}\label{thm:MainResultsScheme}
Assume \eqref{As:wepos}, \eqref{As:webdd} for some $r\in(0,2]$ and $h,\tau>0$. Let $u_0\in X^a_b(\R^d)$ for some $a\in(0,r]$, and $D\in C_b([0,T]: X^r_b(\R^d)) \cap C_b(\R^d: Lip([0,T]))$ nonnegative.
\begin{enumerate}[\rm (a)]
	\item \emph{(Existence and uniqueness)} There exists a unique solution $U^n$ of \eqref{eq:NumSch} with $U^0=u_0$.
	\item \emph{($L^\infty$-Stability)} $\|U^n\|_{L^\infty(\R^d)} \leq \|u_0\|_{L^\infty(\R^d)}$ for all $n\geq0$.
	\item \emph{($L^\infty$-Contraction)} Let $V^n$ satisfy \eqref{eq:NumSch} with $V^0=v_0\in X^a_b(\R^d)$. Then, for all $n\geq0$, we have \[\|U^n-V^n\|_{L^\infty(\R^d)}\leq \|u_0-v_0\|_{L^\infty(\R^d)}.\]
	\item \emph{(Space equicontinuity)} There exists  $C=C(\alpha,r, R,D, u_0)>0$ such that
	\[
	\|U^n(\cdot+y)-U^n\|_{L^\infty(\R^d)} \leq C \max\{1, t_{n}^\alpha\} \left\{ \begin{split} \max\{|y|^a, |y|^r\}, \quad  & \textup{if} \quad r\in (0,1],\\ \max\{|y|^\frac{a}{r},|y|\}, \quad  & \textup{if} \quad r\in (1,2]. \quad
	\end{split}\right.
	\]
	\item \emph{(Time equicontinuity)} There exists $C=C(\alpha,r, R,D, u_0)>0$ such that, for all $0\leq t_m< t_n$, we have
	\[
	\|U^n - U^m\|_{L^\infty(\mathbb{R}^d)} \leq C \max\{(t_n-t_m)^{\alpha\frac{a}{r}}, (t_n-t_m)^{\frac{a}{r}}\},
	\]
	and 
	\[
	\|U^n - U^m\|_{L^\infty(\mathbb{R}^d)} \leq  C ((t_n^{\alpha-1}+1)(t_n-t_m) )^\frac{a}{r}.
	\]
\end{enumerate}

\end{thm}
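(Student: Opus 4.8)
The plan is to obtain (a) from a frozen‑coefficient fixed point argument, (b)--(c) from a discrete comparison principle, and (d)--(e) in two steps: first for data in $X^r_b(\R^d)$ by way of the discrete fractional Gr\"onwall inequality of Appendix~\ref{app:Gronwall}, and then for general $u_0\in X^a_b(\R^d)$ by mollification and optimisation. For (a) I would split $L_h=K_h-2W_h\,I$, where $K_h\psi(x):=\sum_{\beta\ne0}(\psi(x+z_\beta)+\psi(x-z_\beta))\omega_\beta(h)$ and $W_h:=\sum_{\beta\ne0}\omega_\beta(h)<\infty$ by \eqref{As:wepos}. Then \eqref{eq:NumSch} at level $n$ becomes the fixed point equation $U^n=M^{-1}D^nK_hU^n+M^{-1}F^{n-1}$, with $M(x):=\tfrac{\tau^{-\alpha}}{\Gamma(2-\alpha)}+2W_hD^n(x)$ and $F^{n-1}$ the explicit term built from $U^0,\dots,U^{n-1}$ via the L1 history weights $b_i$ in \eqref{def:disccap} (using that $b_i$ is positive and decreasing and that $b_{n-1}+\sum_{k=1}^{n-1}(b_{n-k-1}-b_{n-k})=1$). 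Since $\sup_x\,2W_hD^n(x)\,M(x)^{-1}<1$, the right-hand side is a contraction on $\mathcal B(\R^d)$ that preserves $C_b(\R^d)$, so Banach's theorem and induction on $n$ give (a).

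For (b) and (c) I would establish a discrete comparison principle for \eqref{eq:NumSch}: a supersolution $\overline U^n$ and a subsolution $\underline U^n$ with $\underline U^0\le\overline U^0$ satisfy $\underline U^n\le\overline U^n$ for all $n$. The proof is by induction: subtracting the scheme inequalities and discarding the history terms, which are $\le 0$ by positivity of the L1 coefficients and the inductive hypothesis, leaves $\tfrac{\tau^{-\alpha}}{\Gamma(2-\alpha)}W^n\le D^nL_hW^n$ for $W^n:=\underline U^n-\overline U^n$; if $\sup_{\R^d}W^n>0$, evaluating near a maximising point and using $K_hW^n\le 2W_h\sup_{\R^d}W^n$ together with $D^n\le\|D^n\|_{L^\infty(\R^d)}$ forces $\tfrac{\tau^{-\alpha}}{\Gamma(2-\alpha)}\sup_{\R^d}W^n\le0$, a contradiction. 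Since $\partial^\alpha_\tau$ and $L_h$ annihilate constants, $\pm\|u_0\|_{L^\infty(\R^d)}$ are a super- and a sub-solution of \eqref{eq:NumSch}, which gives (b); and $U^n-V^n$ solves \eqref{eq:NumSch} with datum $u_0-v_0$ by linearity, so (b) applied to it gives (c).

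For (d)--(e) the first step is the case $u_0\in X^r_b(\R^d)$, where Remark~\ref{rem:boundwei} and \eqref{As:webdd} provide the $h$-uniform bound $\|L_hU^n\|_{L^\infty(\R^d)}\le C_r(\|U^n\|_{X^r(\R^d)}+4\|U^n\|_{L^\infty(\R^d)})$. Fix a shift $y$; then $W^n:=U^n(\cdot+y)-U^n$ solves \eqref{eq:NumSch} with diffusivity $\tilde D^n(x):=D(x+y,t_n)$ and source $f^n:=(\tilde D^n-D^n)L_hU^n$, and the comparison principle bounds $\|W^n\|_{L^\infty(\R^d)}$ by the solution of the scalar L1 problem $\partial^\alpha_\tau z^n=\|f^n\|_{L^\infty(\R^d)}$, $z^0=\|W^0\|_{L^\infty(\R^d)}$. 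Estimating $\|W^0\|_{L^\infty(\R^d)}$ and $\|f^n\|_{L^\infty(\R^d)}$ through the $X^r$-data of $u_0$ and the $X^r$-H\"older ($r\le1$) or Lipschitz ($r>1$) continuity of $D$, with modulus $|y|^r$, resp. $|y|$, and folding the seminorm $\|U^n\|_{X^r(\R^d)}=\sup_{y\ne0}\|U^n(\cdot+y)-U^n\|_{L^\infty(\R^d)}/|y|^r$ back into the inequality, the discrete fractional Gr\"onwall of Appendix~\ref{app:Gronwall} closes everything into an $h,\tau,n$-uniform bound with Mittag--Leffler growth, i.e. the factor $\max\{1,t_n^\alpha\}$; this is (d) for $a=r$. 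The same control of $\|U^n\|_{X^r(\R^d)}$ gives $\|\partial^\alpha_\tau U^n\|_{L^\infty(\R^d)}\le K$ with $K$ independent of $h,\tau$, and the elementary mapping properties of the discrete fractional integral (again Appendix~\ref{app:Gronwall}) turn this into $\|U^n-U^m\|_{L^\infty(\R^d)}\le CK(t_n-t_m)^\alpha$ and $\|U^n-U^m\|_{L^\infty(\R^d)}\le CK(t_n^{\alpha-1}+1)(t_n-t_m)$, which is (e) for $a=r$.

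The second step, and the main difficulty, is the case $a<r$: then $\|L_hU^n\|_{L^\infty(\R^d)}$ is in general \emph{not} bounded uniformly in $h$, so $f^n$ above cannot be controlled directly. I would mollify, $u_0\mapsto u_0^\rho$, with $\|u_0-u_0^\rho\|_{L^\infty(\R^d)}\lesssim\rho^a$ and $\|u_0^\rho\|_{X^r(\R^d)}\lesssim\rho^{a-r}$, apply step one to the corresponding scheme solution $U^{n,\rho}$, control $\|U^n-U^{n,\rho}\|_{L^\infty(\R^d)}\lesssim\rho^a$ by the contraction (c), and optimise over $\rho\in(0,1]$: balancing $\rho^a$ against $\rho^{a-r}\max\{1,t_n^\alpha\}|y|^r$ (resp. $\rho^{a-r}|y|$ for $r>1$) yields $\max\{|y|^a,|y|^r\}$ (resp. $\max\{|y|^{a/r},|y|\}$) in (d), while balancing $\rho^a$ against $\rho^{a-r}(t_n-t_m)^\alpha$, resp. $\rho^{a-r}(t_n^{\alpha-1}+1)(t_n-t_m)$, yields the exponents $\alpha a/r$ and $a/r$ in (e). The points requiring care are to keep the $\rho$-dependence of every constant explicit so that the final $C$ involves neither $h$ nor $\tau$, to carry out $r\le1$ and $r\in(1,2]$ separately (the relevant moduli of $u_0^\rho$, $\nabla u_0^\rho$ and $D$ differ), and, already in step one, to absorb the self-referential quantity $\|U^n\|_{X^r(\R^d)}$ into the Gr\"onwall estimate rather than bounding it in advance.
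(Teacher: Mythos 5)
Parts (a)--(c) of your proposal are fine and essentially the paper's own route (Banach fixed point for (a); a comparison/contraction argument plus the scalar Gr\"onwall bound \eqref{eq:Gronw2} for (b)--(c)), and your mollification--optimisation scheme for passing from $a=r$ to general $a\in(0,r]$ is exactly the paper's strategy. The first genuine gap is in your ``step one'' for (e): from $\|\partial^\alpha_\tau U^n\|_{L^\infty(\R^d)}\le K$ alone you can only conclude the $\alpha$-H\"older bound $\|U^n-U^m\|_{L^\infty(\R^d)}\lesssim K(t_n-t_m)^\alpha$; the second, weighted-Lipschitz estimate $\|U^n-U^m\|_{L^\infty(\R^d)}\lesssim K(t_n^{\alpha-1}+1)(t_n-t_m)$ is strictly stronger when $t_n-t_m\ll t_n$ and simply does not follow from boundedness of the (discrete) Caputo derivative: the function $y(t)=(t-t_0)_+^{\alpha}$ has Caputo derivative identically $\Gamma(1+\alpha)$ for $t>t_0$, yet at $t_0>0$ it is only $\alpha$-H\"older, not Lipschitz with weight $t^{\alpha-1}$. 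That second estimate is the one that encodes the blow-up rate of the time derivative at $t=0$ and later drives the $\tau^{1-\alpha}$ error analysis, so it cannot be discarded. The paper obtains it by differencing the scheme in time: using $\delta_1\partial^\alpha_\tau U^n={}^{RL}\partial_\tau^\alpha\delta_1 U^n$, the source $\delta_1 D^n\,L_hU^{n+1}$ is $O(\tau)\|L_hU^0\|$ thanks to the Lipschitz-in-time regularity of $D$ (an assumption your argument for (e) never invokes, although it is in the theorem precisely for this purpose), the first step gives $\|\delta_1U^0\|\lesssim \tau^\alpha\|L_hU^0\|$, and the Riemann--Liouville Gr\"onwall \eqref{eq:RLCor} produces the factor $\tau^{1-\alpha}t_n^{\alpha-1}$ from the initial term, i.e.\ $\tau^{-1}\|U^n-U^{n-1}\|_{L^\infty(\R^d)}\lesssim \|L_hU^0\|(1+t_n^{\alpha-1})$, which is then summed, mollified and optimised.

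The second gap is in your step one for (d) when $r\in(1,2]$. You replace the paper's key Lemma \ref{lem:decLh} (the propagation bound $\|L_hU^n\|_{L^\infty(\R^d)}\le C\|L_hU^0\|_{L^\infty(\R^d)}$, proved by applying the scheme to $L_hU^n$ itself and running a Caputo Gr\"onwall with rate $\|L_hD^n\|$) by the bound $\|L_hU^n\|\lesssim \|U^n\|_{X^r(\R^d)}+\|U^n\|_{L^\infty(\R^d)}$ together with a self-referential Gr\"onwall in $\|U^n\|_{X^r(\R^d)}$. For $r\in(0,1]$ this can be made to work (you must still verify a priori that $\|U^n\|_{X^r(\R^d)}<\infty$ for fixed $h,\tau$ before running the Gr\"onwall, and the Mittag--Leffler factor is not literally $\max\{1,t_n^\alpha\}$, though on $[0,T]$ that is harmless). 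For $r\in(1,2]$, however, your identification $\|U^n\|_{X^r(\R^d)}=\sup_{y\neq0}\|U^n(\cdot+y)-U^n\|_{L^\infty(\R^d)}/|y|^r$ is false: for $r>1$ that supremum is finite only for constants, and the paper's $X^r_b$ norm in this range involves $\nabla\phi$ and its $(r-1)$-H\"older seminorm, quantities that the first-difference equation for $\delta_yU^n$ does not control. Hence your Gr\"onwall cannot be closed for $r>1$, and the mollification step built on it collapses there as well. The paper's Lemma \ref{lem:decLh} avoids this entirely: no spatial seminorm of $U^n$ is ever needed, the shift equation for $\delta_yU^n$ has source bounded by $\Lambda_D(|y|)\,\|L_hU^0\|$, and the simple Gr\"onwall \eqref{eq:Gronw2} then yields the stated bound with the clean $t_n^{\alpha}$ factor before mollification.
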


Next, with the use of uniform regularity estimates and the Arzel\`a-Ascoli theorem we will prove that the numerical scheme converges to a limit. The result is the following. 

\begin{cor}\label{thm:MainResultsConvergence}
Let the assumptions of Theorem \ref{thm:MainResultsScheme} hold. There exists a function $u\in C_b(Q_T)$ and a subsequence $(h_j,\tau_j)\to0$ as $j\to \infty$ such that
\[
	U\to u \quad \textup{locally uniformly in} \quad \mathbb{R}^d\times[0,T] \quad \textup{as} \quad j\to \infty. 
\]
\end{cor}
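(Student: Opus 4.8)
The plan is to apply the Arzel\`a--Ascoli theorem to the family of functions $\{U_{h,\tau}\}$ obtained from the numerical scheme, viewing each discrete solution as a continuous function on $Q_T = \R^d\times[0,T]$ after a suitable interpolation in the grid variables. First I would fix a sequence $(h_j,\tau_j)\to 0$ and, for each $j$, extend the grid function $U^n$ (defined on $h_j\Z^d \times \tau_j\N$, or already on $\R^d$ in the continuous-in-space formulation) to a genuine continuous function $\tilde U_j$ on $Q_T$ — e.g.\ by piecewise (multi)linear interpolation in both $x$ and $t$. The key point is that the modulus-of-continuity estimates for this interpolant are controlled, up to a fixed constant, by the equicontinuity estimates for the grid function itself, which are exactly parts (d) and (e) of Theorem \ref{thm:MainResultsScheme}, together with the $L^\infty$-bound from part (b).

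The three hypotheses of Arzel\`a--Ascoli are then verified in turn. Uniform boundedness is immediate from part (b): $\|\tilde U_j\|_{L^\infty(Q_T)}\le \|u_0\|_{L^\infty(\R^d)}$ for all $j$, since linear interpolation does not increase the sup-norm. Equicontinuity in space follows from part (d): for $x,x'\in\R^d$ and any $t\in[0,T]$, $|\tilde U_j(x,t)-\tilde U_j(x',t)|$ is bounded by $C\max\{1,T^\alpha\}$ times $\max\{|x-x'|^a,|x-x'|^r\}$ or $\max\{|x-x'|^{a/r},|x-x'|\}$, a modulus independent of $j$ (one should check that interpolation in the $t$-variable, which mixes two time slices, preserves this since the bound is uniform over the slices). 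Equicontinuity in time follows from the first inequality in part (e), giving a $j$-independent H\"older modulus $\omega(s)=C\max\{s^{\alpha a/r},s^{a/r}\}$; again interpolation in $x$ between grid points preserves it. Combining the two gives joint equicontinuity on $Q_T$.

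Since $Q_T$ is not compact (it is $\R^d\times[0,T]$), I would run Arzel\`a--Ascoli on an exhausting sequence of compact sets $\overline{B_R}\times[0,T]$, extracting a subsequence converging uniformly on $\overline{B_1}\times[0,T]$, then a further subsequence on $\overline{B_2}\times[0,T]$, and so on, and finally take a diagonal subsequence $(h_j,\tau_j)$ along which $\tilde U_j\to u$ uniformly on every $\overline{B_R}\times[0,T]$, i.e.\ locally uniformly on $\R^d\times[0,T]$. The limit $u$ inherits the uniform bound and both moduli of continuity, so $u\in C_b(Q_T)$. A final, mostly bookkeeping, step is to note that $U_{h_j,\tau_j}$ (the grid function, evaluated at grid points) and its interpolant $\tilde U_j$ differ by at most the modulus of continuity evaluated at the mesh size, hence by $o_j(1)$ uniformly on compacts, so the stated locally uniform convergence $U\to u$ holds for the original scheme values as well.

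The main obstacle — really the only non-cosmetic issue — is making precise the interpolation step and checking that the grid equicontinuity estimates transfer to the continuous interpolant with no loss beyond a universal constant; in particular one must be slightly careful that the bound in part (e) is stated for all $0\le t_m<t_n$ and that interpolating in time between two consecutive slices $t_n,t_{n+1}$ does not spoil the spatial modulus, and symmetrically for the time modulus under spatial interpolation. This is routine because all the estimates in Theorem \ref{thm:MainResultsScheme} are uniform in the remaining variable, but it is the step that requires actual (if short) argument rather than citation.
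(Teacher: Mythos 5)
Your proposal is correct and follows essentially the same route as the paper: the paper extends the scheme by piecewise linear interpolation in time (space is already continuous in its formulation), transfers the bound and the space/time equicontinuity estimates to the interpolant (its Lemmas \ref{lem:equisspace} and \ref{lem:estcontintime}, which is exactly the "only non-cosmetic issue" you flag), and then applies Arzel\`a--Ascoli with a compact exhaustion to get a locally uniformly convergent subsequence whose limit inherits the estimates. No gaps worth noting.
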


The obtained continuous solution is itself a viscosity solution of \eqref{eq:main} that inherits the regularity of the discrete scheme. At least in several special cases, the literature gives sufficient conditions for the uniqueness of such solutions. A more detailed discussion is given below in Remark \ref{rem:Uniqueness}.  The main result is the following. 

\begin{thm}\label{thm:MainResultsViscosity}
Let the assumptions of Theorem \ref{thm:MainResultsScheme} hold. Then the function $u$ given in Corollary \ref{thm:MainResultsConvergence} is a viscosity solution of \eqref{eq:main}. Furthermore, 
\begin{enumerate}[\rm (a)]
	\item \emph{(Convergence)} If viscosity solutions of \eqref{eq:main} are unique, the numerical scheme converges, that is,
	\[
		U\to u \quad \textup{locally uniformly in} \quad \R^d\times[0,T] \quad \textup{as} \quad (h,\tau)\to 0^+. 
	\]
	\item \emph{(Boundedness)} $
	\|u(\cdot,t)\|_{L^\infty(\R^d)} \leq \|u_0\|_{L^\infty(\R^d)}$  \textup{for all} $ t\in[0,T]$. 
	\item \emph{(Space equicontinuity)} There exists $C=C(\alpha,r, T, D, u_0)>0$ such that
	\[
	\|u(\cdot+y,t)-u(\cdot,t)\|_{L^\infty(\R^d)} \leq C\max\{1, t^\alpha\} \left\{ \begin{split}
		 \max\{|y|^a, |y|^r\},\quad  & \textup{if} \quad r\in (0,1],\\
		 \max\{|y|^\frac{a}{r},|y|\}, \quad  & \textup{if} \quad r\in (1,2], \quad
	\end{split}\right. \quad \textup{for all} \quad t\in[0,T].
	\]
	\item \emph{(Time equicontinuity)} There exists $C=C(\alpha,r, T, D, u_0)>0$ such that, for all $0\leq t<\tilde{t}$, we have
	\begin{equation}
		\|u(\cdot,\tilde{t}) - u(\cdot,t)\|_{L^\infty(\mathbb{R}^d)} \leq C \max\{(\tilde{t}-t)^{\alpha\frac{a}{r}}, (\tilde{t}-t)^{\frac{a}{r}}\}
	\end{equation}	
	and 
	\begin{equation}
		\|u(\cdot,\tilde{t}) - u(\cdot,t)\|_{L^\infty(\mathbb{R}^d)} \leq  C ((\tilde{t}^{\alpha-1}+1)(\tilde{t}-t) )^\frac{a}{r}.
	\end{equation}
	\item \label{item-e-limit} \emph{(Classical solution)} Additionally, assume \eqref{As:constrong}, \eqref{As:Le} for some $r\in[0,1)$, and let $u_0\in X^a_b(\R^d)$ with $a\in(r,1]$. Then, the limit $u$ satisfies \eqref{eq:main} in the pointwise sense. 
\end{enumerate}
\end{thm}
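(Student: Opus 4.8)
<br>

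The plan is to prove Theorem \ref{thm:MainResultsViscosity}(\ref{item-e-limit}) by upgrading the known viscosity solution to a classical (pointwise) one, exploiting the extra spatial regularity that the initial data carries in the regime $r<1$, $a\in(r,1]$. The key observation is that when $r<1$, assumption \eqref{As:webdd} allows the operator $L_h$ (and its limit $\Le^{\mu,\sigma}$, which here is purely nonlocal since $r<1$ forces $\sigma=0$) to act boundedly on merely $X^a_b$ functions, with $\|\Le^{\mu,\sigma}\phi\|_{L^\infty}\lesssim \|\phi\|_{X^a_b}$ by the same telescoping estimate used in Remark \ref{rem:boundwei} (splitting the sum/integral at $|z|=1$ and using the $a$-Hölder bound on the small-jump part, boundedness on the large-jump part). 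Since parts (b)--(d) of the theorem already give us that $u(\cdot,t)$ is $\min\{a,a/r\}=a$-Hölder in space uniformly in $t$ (as $a\le r<1$ gives $a/r\ge 1$, but $a\le 1$, so the relevant exponent is $a$), we know $u(\cdot,t)\in X^a_b(\R^d)$ with a norm bounded uniformly on $[0,T]$. Hence $\Le^{\mu,\sigma}[u](\cdot,t)$ is well-defined pointwise and bounded.

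First I would record the a priori regularity: from Theorem \ref{thm:MainResultsViscosity}(c) and the fact that $a\le r<1$, the map $t\mapsto u(\cdot,t)$ lies in a bounded subset of $X^a_b(\R^d)$, and from (d) it is Hölder continuous in $t$ with values in $C_b(\R^d)$; moreover $x\mapsto D(x,t)\Le^{\mu,\sigma}[u](x,t)$ is bounded and continuous in $(x,t)$ (continuity in $x$ from dominated convergence plus Hölder continuity of $u$ and $D$; continuity in $t$ similarly). Thus the right-hand side of \eqref{eq:main} is a bounded continuous function on $Q_T$. Next I would pass to the limit in the scheme \eqref{eq:NumSch} directly, \emph{without} test functions. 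Along the convergent subsequence $(h_j,\tau_j)\to 0$ from Corollary \ref{thm:MainResultsConvergence}, I would show (i) $L_{h_j}[U^{n_j}](x)\to \Le^{\mu,\sigma}[u](x,t)$ whenever $t_{n_j}\to t$, using \eqref{As:constrong} to control $\|L_{h_j}\phi-\Le^{\mu,\sigma}\phi\|_{L^\infty}\le \|\phi\|_{X^a_b}O(h_j^{a-r})$ applied to $\phi=u(\cdot,t)$, together with the uniform-in-$h$ bound on $L_{h_j}$ from Remark \ref{rem:boundwei} applied to the error $U^{n_j}-u(\cdot,t)$ which goes to zero locally uniformly; and (ii) $\partial^\alpha_{\tau_j}U^{n_j}(x)\to \partial_t^\alpha u(x,t)$ in the sense of \eqref{eq:capdef}, using that the L1 scheme is a Riemann-sum-type discretization of \eqref{eq:capdef} and that $u$ is Hölder in time with the blow-up rate from the second estimate in (d) making the singular integral in \eqref{eq:capdef} absolutely convergent. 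Combining (i)--(ii) with the scheme identity gives $\partial_t^\alpha u(x,t)=D(x,t)\Le^{\mu,\sigma}[u](x,t)$ pointwise for all $x$ and all $t\in(0,T]$.

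An alternative, possibly cleaner route avoids re-analyzing the L1 convergence: since $u$ is already the viscosity solution, and since we now know the candidate right-hand side $f(x,t):=D(x,t)\Le^{\mu,\sigma}[u](x,t)$ is continuous and bounded, one can argue that for each fixed $x_0$ the function $t\mapsto u(x_0,t)$ solves the fractional ODE $\partial_t^\alpha w = f(x_0,t)$ in the viscosity sense, and then invoke the equivalence between viscosity and pointwise (Volterra-integral) solutions for such scalar Caputo problems with continuous right-hand side — effectively $u(x_0,t)=u_0(x_0)+\frac{1}{\Gamma(\alpha)}\int_0^t (t-s)^{\alpha-1}f(x_0,s)\,ds$ — which is classical once the integrand is continuous. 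Differentiating this identity (or rather, applying $\partial_t^\alpha$ to it in the sense of \eqref{eq:capdef}) recovers the equation pointwise. The main obstacle in either approach is verifying that the nonlocal operator genuinely acts on $u(\cdot,t)$ and that this action is continuous in $t$ up to (but not including, where the blow-up lives) $t=0$: this requires carefully combining the spatial $X^a_b$ bound (uniform in $t$) with a dominated-convergence argument exploiting the moment condition \eqref{As:Le}, and checking that the small-jump singularity $\int_{|z|<1}|z|^a\,d\mu(z)$ is finite, which holds precisely because $a>r$ and \eqref{As:Le} gives $\int_{|z|<1}|z|^r\,d\mu(z)<\infty$ — the strict inequality $a>r$ is what buys us the pointwise meaning, exactly as flagged in the statement.
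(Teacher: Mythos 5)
Your proposal never proves the central assertion of the theorem, namely that the limit $u$ is a viscosity solution of \eqref{eq:main} (and hence also part (a)); you take (b)--(d) as given and jump straight to part \eqref{item-e-limit}. In the paper this viscosity step is the bulk of the work (Theorem \ref{thm:convvisc}): one introduces the piecewise-constant-in-time extension $V$ of the numerical solution, shows it satisfies the scheme for all $t\geq \tau$ with the extended operator $\overline{\partial}^\alpha_\tau$ (Lemma \ref{lem:eqV}) and converges to the same limit $u$ (Lemma \ref{lem:Vconv}), and then runs the comparison argument at approximate maximum points $(x_\veps,t_\veps)$ of $V-\phi$, using the monotonicity of both $\overline{\partial}^\alpha_\tau$ and $L_h$ to replace $V$ by the test function $\phi$, and the consistency assumption \eqref{As:conweak} together with the continuity of $D$ to pass to the limit. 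None of this appears in your proposal, and your ``alternative, cleaner route'' for (e) explicitly presupposes that $u$ is already known to be a viscosity solution, so the gap is not peripheral.

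For part \eqref{item-e-limit} itself, your step (i) contains a flawed sub-argument: you propose to control $L_{h_j}\bigl[U^{n_j}-u(\cdot,t)\bigr]$ by the uniform-in-$h$ bound of Remark \ref{rem:boundwei} plus locally uniform convergence of the error, but that bound is in terms of the H\"older norm of the argument, which does not tend to zero; it yields boundedness, not smallness. The paper's proof (Theorem \ref{thm:ConvergencePointwise}) instead splits $L_h U_h-\Le^{\mu}u=(L_h-\Le^{\mu})U_h+\Le^{\mu}(U_h-u)$, estimates the first term by $O(h^{a-r})$ via \eqref{As:constrong} applied to $U_h$ (whose $X^a_b$ norm is uniformly bounded), and treats the second by dominated convergence, using pointwise convergence $U_h\to u$ and the domination $\min\{|z|^a,1\}\leq\min\{|z|^r,1\}\in L^1(\dd\mu)$; your closing paragraph gestures at exactly this, but your step (i) as written does not deliver it. Your alternative route has its own unproved step: it is not automatic that for frozen $x_0$ the slice $t\mapsto u(x_0,t)$ is a viscosity solution of the scalar problem $\partial^\alpha_t w=f(x_0,t)$, since viscosity test functions for \eqref{eq:main} touch $u$ in space and time jointly; the paper avoids this by handling the time derivative through the L1 consistency estimate of Proposition \ref{prop:L1Truncation} combined with the time regularity of $u$. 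Finally, a small slip: in part \eqref{item-e-limit} one has $a\in(r,1]$, so your parenthetical ``$a\le r<1$'' (and the ensuing exponent discussion) is inconsistent with the regime you are working in, even though the conclusion $u(\cdot,t)\in X^a_b(\R^d)$ uniformly in $t$ is the one the paper also uses.
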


Recall that $X^c_b \subseteq X^a_b$ for all $c \in (0,a]$. From here, we see that the initial condition in Theorem \ref{thm:MainResultsViscosity} \eqref{item-e-limit} belongs to $X^c_b$ with $c \in (0,a]$. Therefore, all previous assertions of this theorem are also satisfied for classical solutions.

Note that $L^\infty$-contraction of Theorem \ref{thm:MainResultsScheme} cannot be in general inherited since the converge of each numerical solution can be true only for different subsequences depending on the initial condition. However, if uniqueness of viscosity solutions holds, the estimate
\[
	\|u(\cdot, t)-v(\cdot, t)\|_{L^\infty(\R^d)}\leq \|u_0-v_0\|_{L^\infty(\R^d)},
\]
is also true under the assumptions of Theorem \ref{thm:MainResultsViscosity}.

Having proved the existence of a viscosity solution, we can additionally obtain estimates on orders of convergence for the numerical scheme. 

\begin{thm}\label{thm:MainResultsOrders}
Let the assumptions of Theorem \ref{thm:MainResultsScheme} hold along with \eqref{As:constrong} and \eqref{As:Le} for some $r\in[0,1)$, and $u_0\in X^a_b(\R^d)$ with $a\in(r,1]$. Let $e^n(x) = u(x,t_n) - U^n(x)$ be the error of the numerical scheme, where $u$ is the function given in Corollary \ref{thm:MainResultsConvergence} with $U^0 = u_0$. Then,
\[
	\|e^n\|_{L^\infty(\mathbb{R}^d)} \leq C\left(t_n^{2\alpha-1} \tau^{1-\alpha}
	+t_n^{\alpha}h^{a-r}\right),
\]
for some constant $C=C(\alpha,r, T, D, u_0)>0$.
\end{thm}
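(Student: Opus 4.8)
The plan is to separate the error into a purely temporal and a purely spatial consistency error and to dominate $e^n$ by the discrete fractional integral of these two pieces, using the discrete comparison principle that already underlies the $L^\infty$-contraction in Theorem~\ref{thm:MainResultsScheme}(c) (alternatively, the discrete fractional Gr\"onwall inequality of Appendix~\ref{app:Gronwall}). Since we are in the setting of Theorem~\ref{thm:MainResultsViscosity}\eqref{item-e-limit}, the limit $u$ solves \eqref{eq:main} pointwise; writing $u^n:=u(\cdot,t_n)$, evaluating the equation at the grid nodes and subtracting the scheme \eqref{eq:NumSch} gives the error equation
\[
\partial^\alpha_\tau e^n(x) = D^n(x)\,L_h e^n(x) + \rho^n_\tau(x) + \rho^n_h(x),\qquad e^0\equiv 0,
\]
where $\rho^n_\tau(x)=\partial^\alpha_\tau u^n(x)-\partial^\alpha_t u(x,t_n)$ is the truncation error of the L1 discretization \eqref{def:disccap} and $\rho^n_h(x)=D^n(x)\big(\Le^{\mu,\sigma}[u](x,t_n)-L_h[u^n](x)\big)$. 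Put $\bar\rho^n:=\|\rho^n_\tau\|_{L^\infty(\R^d)}+\|\rho^n_h\|_{L^\infty(\R^d)}$ and let $g^n$ (constant in $x$, so $L_h g^n\equiv0$) solve $\partial^\alpha_\tau g^n=\bar\rho^n$, $g^0=0$. Then $U^n\pm g^n$ are respectively a supersolution and a subsolution of the scheme with the same initial datum as $u^n$, and the discrete comparison principle yields $\|e^n\|_{L^\infty(\R^d)}\le g^n$. It remains to estimate $\bar\rho^n$ and to solve the scalar problem.

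For the spatial part I would use \eqref{As:constrong} (legitimate here since $r\in[0,1)$ and $u_0\in X^a_b$ with $a\in(r,1]$) together with the uniform-in-time bound $\sup_{t\in[0,T]}\|u(\cdot,t)\|_{X^a_b(\R^d)}<\infty$ furnished by the space-regularity estimates of Theorems~\ref{thm:MainResultsScheme}--\ref{thm:MainResultsViscosity} in the regime $a>r$ (where, the order of the operator being below the H\"older exponent of the datum, the spatial regularity of the solution is not lowered). This gives $\|\rho^n_h\|_{L^\infty(\R^d)}\le\|D\|_{L^\infty}\,\|u(\cdot,t_n)\|_{X^a_b}\,O(h^{a-r})\le C h^{a-r}$ uniformly in $n$ with $t_n\le T$; solving $\partial^\alpha_\tau g_1^n=Ch^{a-r}$, $g_1^0=0$, by discrete fractional integration gives $g_1^n\le C' t_n^\alpha h^{a-r}$, the second term of the claimed bound.

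For the temporal part — the heart of the matter — I would use the derivative-free form \eqref{eq:capdef} of the Caputo operator, of which the L1 rule \eqref{def:disccap} is exactly the quadrature obtained by replacing $u(\cdot,s)$ on each $[t_k,t_{k+1}]$ by its piecewise-linear time interpolant $\ell_k$; no $C^2$-in-time regularity enters. Thus $\rho^n_\tau(x)$ is a sum over $k$ of integrals of $u(x,s)-\ell_k(x,s)$ against the kernels $t_n^{-\alpha}$ and $(t_n-s)^{-1-\alpha}$, and $|u(x,s)-\ell_k(x,s)|$ is controlled by the time-modulus of $u$ from Theorem~\ref{thm:MainResultsViscosity}(d), which degrades like $t^{\alpha-1}$ near $t=0$, reinforced by the first-order blow-up rate from the regularity part (which, using $a>r$, is no worse than $\|\partial_t u(\cdot,t)\|_{L^\infty}\lesssim t^{\alpha-1}$) for $t$ away from $0$. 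Splitting the sum into the first layer $k=0$ (handled by the time-modulus), the bulk $1\le k\le n-2$ (handled by the derivative bound), and the last layer $k=n-1$ (where the vanishing of $u-\ell_{n-1}$ at the nodes absorbs the non-integrable kernel singularity), and summing carefully, I expect to reach $\|\rho^n_\tau\|_{L^\infty(\R^d)}\le C\, t_n^{\alpha-1}\tau^{1-\alpha}$. Solving $\partial^\alpha_\tau g_2^n=C\,t_n^{\alpha-1}\tau^{1-\alpha}$, $g_2^0=0$, by discrete fractional integration (the discrete counterpart of $\partial^\alpha_t[t^{2\alpha-1}]=\tfrac{\Gamma(2\alpha)}{\Gamma(\alpha)}t^{\alpha-1}$) gives $g_2^n\le C'\,t_n^{2\alpha-1}\tau^{1-\alpha}$, the first term; adding $g_1^n+g_2^n$ and taking $\sup$ over the grid also recovers the advertised \emph{global} rates, since $\sup_{t_n\le T}t_n^{2\alpha-1}$ is $T^{2\alpha-1}$ for $\alpha\ge\tfrac12$ but is attained at $t_n\sim\tau$ for $\alpha<\tfrac12$ (producing the extra $\tau^{2\alpha-1}$).

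The main obstacle is the temporal estimate $\|\rho^n_\tau\|\le C\,t_n^{\alpha-1}\tau^{1-\alpha}$: the classical L1 analysis assumes $u\in C^2((0,T])$ with $\|\partial^m_t u\|\lesssim t^{\alpha-m}$, which is not available here, so the piecewise-linear interpolation error must be squeezed out of precisely the regularity the compactness argument delivers — the time-H\"older modulus near $t=0$ and the first-order blow-up rate for $t>0$ — and the resulting weighted sums against the singular Caputo kernel, split between the behaviour at $t=0$ and at $t=t_n$, require delicate bookkeeping; this is the novelty emphasized in the introduction. A secondary, smaller point is to confirm the uniform-in-time $X^a_b$ spatial bound for $u$ used in the spatial estimate.
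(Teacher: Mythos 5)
Your proposal follows essentially the same route as the paper: the same error equation with spatial and temporal truncation terms, reduction to their discrete fractional integrals via a comparison/Gr\"onwall step (the paper uses the $\esssup$/$\essinf$ argument plus the Caputo Gr\"onwall inequality \eqref{eq:GrongCap1}, equivalent to your barrier $g^n$), the bound $\|R^n_x\|\le Ch^{a-r}$ from \eqref{As:constrong} and the uniform $X^a_b$ bound on $u$ giving $t_n^\alpha h^{a-r}$, and the L1 truncation estimate obtained exactly as in Proposition \ref{prop:L1Truncation} — viewing the L1 operator as the Caputo derivative of the piecewise-linear interpolant, using only the Lipschitz-with-$t^{\alpha-1}$-blow-up modulus (the paper's \eqref{equi:est2contlimit} with $a/r\ge 1$, not an actual bound on $\partial_t u$), splitting into the layers $k=0$, the bulk, and $k=n-1$, and then summing against the Beta-function Riemann sum to get $t_n^{2\alpha-1}\tau^{1-\alpha}$. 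The plan is correct and matches the paper's proof in structure and in all key estimates.
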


\begin{rem}
Note that the above error in time $\|e^n\|_{L^\infty(\mathbb{R}^d)}$ gives different behaviors depending on the range of $\alpha$ and for short and long times. More precisely, for $\alpha \in[1/2,1)$, the error in time is $O(\tau^{1-\alpha})$ for all times. However, when $\alpha \in (0,1/2)$, let us fix $t^*>0$ and then the error in time is bounded by
\[
C\left\{
\begin{split}
 \frac{\tau^\alpha}{n^{1-2\alpha}}, \quad &\textup{if}\quad  t<t^*,\\
\frac{\tau^{1-\alpha}}{t_n^{1-2\alpha}}, \quad &\textup{if} \quad t\geq t^*.
\end{split}
\right.
\]
This means that the scheme is of order $O(\tau^\alpha)$ for short times. However, for large times the scheme has accuracy $O(\tau^{1-\alpha})$ for large times, which in this range is better.
\end{rem}

Further below, in Section \ref{sec:Orders}, we present more detailed results concerning orders of convergence. In what follows, we prove all the above statements and later verify the discrete scheme by several numerical experiments. 

\section{Properties of the numerical scheme}\label{sec:PropNumSch}

We first formulate the result of the existence and uniqueness of numerical solutions.
\begin{thm}\label{thm:exisun} Assume \eqref{As:wepos}.
Let $U^0 \in \mathcal{B}(\R^d)$, $D\in\mathcal{B}(Q_T)$ nonnegative and $h,\tau>0$. There exists a unique solution $U^n\in \mathcal{B}(\R^d)$ of \eqref{eq:NumSch} for all $n\geq0$.
\end{thm}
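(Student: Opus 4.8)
The plan is to run the scheme \eqref{eq:NumSch} as a recursion in $n$ and, at each step, recast the equation that determines $U^n$ as a fixed-point problem on the Banach space $(\mathcal{B}(\R^d),\|\cdot\|_{L^\infty(\R^d)})$, to which the Banach contraction principle applies. Set $\kappa:=\tau^{-\alpha}/\Gamma(2-\alpha)>0$ and $A_\omega:=\sum_{\beta\neq0}\omega_\beta(h)$, which is finite by \eqref{As:wepos}. Given $U^0,\dots,U^{n-1}\in\mathcal{B}(\R^d)$, substituting \eqref{def:disccap} into \eqref{eq:NumSch} shows that $U^n$ solves \eqref{eq:NumSch} at step $n$ if and only if
\[
\kappa\,U^n(x)-D^n(x)L_h U^n(x)=F^n(x),\qquad F^n(x):=\kappa\Big(b_{n-1}U^0(x)+\sum_{k=1}^{n-1}(b_{n-k-1}-b_{n-k})U^k(x)\Big),
\]
and, since $F^n$ is a finite linear combination of $U^0,\dots,U^{n-1}$ with bounded coefficients (indeed $b_{n-1}+\sum_{k=1}^{n-1}(b_{n-k-1}-b_{n-k})=b_0=1$), it is bounded whenever $U^0,\dots,U^{n-1}$ are. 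Expanding $L_h$ via \eqref{eq:SpaceOperatorDiscrete} and absorbing the diagonal part into the left-hand side, this equation is equivalent to
\[
\big(\kappa+2A_\omega D^n(x)\big)U^n(x)=F^n(x)+D^n(x)\sum_{\beta\neq0}\omega_\beta(h)\big(U^n(x+z_\beta)+U^n(x-z_\beta)\big).
\]
Because $\kappa>0$, $D^n\geq0$ and $A_\omega\geq0$, the coefficient $\kappa+2A_\omega D^n(x)$ is bounded below by $\kappa$, so I may divide by it and define $\mathcal{T}:\mathcal{B}(\R^d)\to\mathcal{B}(\R^d)$ by
\[
\mathcal{T}[\psi](x):=\frac{1}{\kappa+2A_\omega D^n(x)}\Big(F^n(x)+D^n(x)\sum_{\beta\neq0}\omega_\beta(h)\big(\psi(x+z_\beta)+\psi(x-z_\beta)\big)\Big);
\]
then $U^n$ solves \eqref{eq:NumSch} at step $n$ exactly when $U^n=\mathcal{T}[U^n]$.

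Next I would verify that $\mathcal{T}$ is a contraction. If $A_\omega=0$ then $L_h\equiv0$ and $U^n=F^n/\kappa$ is immediate, so assume $A_\omega>0$. For $\psi\in\mathcal{B}(\R^d)$ one has $\|\mathcal{T}[\psi]\|_{L^\infty(\R^d)}\leq\kappa^{-1}(\|F^n\|_{L^\infty(\R^d)}+2A_\omega\|D\|_{L^\infty(Q_T)}\|\psi\|_{L^\infty(\R^d)})<\infty$, so $\mathcal{T}$ maps $\mathcal{B}(\R^d)$ into itself. For $\psi_1,\psi_2\in\mathcal{B}(\R^d)$ the $F^n$ terms cancel, and since $D^n\geq0$ and $\sum_{\beta\neq0}\omega_\beta(h)=A_\omega$,
\[
|\mathcal{T}[\psi_1](x)-\mathcal{T}[\psi_2](x)|\leq\frac{2A_\omega D^n(x)}{\kappa+2A_\omega D^n(x)}\,\|\psi_1-\psi_2\|_{L^\infty(\R^d)}.
\]
Since $t\mapsto 2A_\omega t/(\kappa+2A_\omega t)$ is increasing on $[0,\infty)$ and $D$ is bounded, the prefactor is at most $\theta:=2A_\omega\|D\|_{L^\infty(Q_T)}/(\kappa+2A_\omega\|D\|_{L^\infty(Q_T)})<1$, uniformly in $x$; hence $\|\mathcal{T}[\psi_1]-\mathcal{T}[\psi_2]\|_{L^\infty(\R^d)}\leq\theta\|\psi_1-\psi_2\|_{L^\infty(\R^d)}$.

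Finally I would conclude by induction on $n$. The case $n=0$ is the hypothesis $U^0\in\mathcal{B}(\R^d)$. If $U^0,\dots,U^{n-1}\in\mathcal{B}(\R^d)$ have already been uniquely determined, then $F^n\in\mathcal{B}(\R^d)$, and since $(\mathcal{B}(\R^d),\|\cdot\|_{L^\infty(\R^d)})$ is complete, the Banach fixed-point theorem furnishes a unique $U^n\in\mathcal{B}(\R^d)$ with $U^n=\mathcal{T}[U^n]$, i.e. a unique $U^n$ solving \eqref{eq:NumSch} at step $n$. The only point requiring a little care is that the contraction constant $\theta$ is strictly below $1$ and independent of $x$: this uses the boundedness of $D$ together with the summability $\sum_{\beta\neq0}\omega_\beta(h)<\infty$ from \eqref{As:wepos}, and, crucially, it holds for every $h,\tau>0$ without any CFL-type restriction precisely because the term $2A_\omega D^n$ has been moved into the positive diagonal coefficient rather than being estimated against $\kappa$.
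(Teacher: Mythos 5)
Your proposal is correct and follows essentially the same route as the paper: recast each time step as a fixed-point problem by absorbing the diagonal part of $L_h$ into the coefficient of $U^n$, observe the resulting map has Lipschitz constant $\frac{2A_\omega\|D\|_{L^\infty}}{\kappa+2A_\omega\|D\|_{L^\infty}}<1$ by monotonicity of $t\mapsto t/(1+t)$, and apply the Banach fixed-point theorem inductively in $n$. The only cosmetic difference is that you work directly on $\mathcal{B}(\R^d)$ with the sup norm, whereas the paper reduces to $\ell^\infty(h\Z^d)$; both are fine.
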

\begin{proof} 

Since we are dealing with a uniform grid, it is enough to show that, given $\{\phi^j\}^{n-1}_{j=0}\subset \ell^\infty(h\Z^d)$, there exists $\phi^n\in \ell^\infty(h\Z^d)$ such that
\[
\frac{\tau^{-\alpha}}{\Gamma(2-\alpha)}\left( \phi^n_\gamma - b_{n-1}\phi^0_\gamma -\sum_{k=1}^{n-1}(b_{n-k-1}-b_{n-k})\phi^k_\gamma \right)= D^n_\gamma \sum_{\beta\not=0} \left(\phi^n_{\gamma+\beta}+\phi^n_{\gamma-\beta}-2\phi^n_\gamma \right)\omega_\beta(h).
\]
Rewriting the above identity, we get,
\[
\phi_\gamma^n = \frac{ D_\gamma^n \Gamma(2-\alpha)\tau^\alpha \sum_{\beta\not=0} \left(\phi^n_{\gamma+\beta}+\phi^n_{\gamma-\beta} \right)\omega_\beta(h)  +  b_{n-1}\phi^0_\gamma +\sum_{k=1}^{n-1}(b_{n-k-1}-b_{n-k})\phi^k_\gamma}{1+  2D_\gamma^n\Gamma(2-\alpha)\tau^\alpha\sum_{\beta\not=0} w_\beta(h)}.
\]
We then need to find a solution $\psi \in \ell^\infty(h\Z^d)$ of the following fixed point problem:
\[
\psi=T[\psi] \quad \textup{with} \quad T[\psi]_\gamma=\frac{   D_\gamma^n \Gamma(2-\alpha)\tau^\alpha \sum_{\beta\not=0} \left(\psi_{\gamma+\beta}+\psi_{\gamma-\beta} \right)\omega_\beta(h)}{1+  2D_\gamma^n\Gamma(2-\alpha)\tau^\alpha\sum_{\beta\not=0} w_\beta(h)} + f_\gamma,
\]
with $f \in \ell^\infty(h\Z^d)$ given by
\[
f_\gamma= \frac{ b_{n-1}\phi^0_\gamma +\sum_{k=1}^{n-1}(b_{n-k-1}-b_{n-k})\phi^k_\gamma}{1+  2D_\gamma^n\Gamma(2-\alpha)\tau^\alpha\sum_{\beta\not=0} w_\beta(h)}.
\]
Clearly,
\[
\begin{split}
	\|T[\psi]\|_{\ell^\infty(h\Z^d)}&\leq \sup_{\gamma\in \Z}\left\{\frac{2D_\gamma^n\Gamma(2-\alpha)\tau^\alpha\sum_{\beta\not=0} w_\beta(h)}{1+  2D_\gamma^n\Gamma(2-\alpha)\tau^\alpha\sum_{\beta\not=0} w_\beta(h)} \right\}\|\psi\|_{\ell^\infty(h\Z^d)} + \|f\|_{\ell^\infty(h\Z^d)}\\
	&\leq \|\psi\|_{\ell^\infty(h\Z^d)} + \|f\|_{\ell^\infty(h\Z^d)}.
\end{split}
\]
Finally note that, since $x\mapsto \frac{x}{1+x}$ is nondecreasing, we get strict contractivity of the map as follows
\[
\begin{split}
	\|T[\psi]-T[\varphi]\|_{\ell^\infty(h\Z^d)}&\leq \sup_{\gamma\in \Z}\left\{ \frac{2D_\gamma^n\Gamma(2-\alpha)\tau^\alpha\sum_{\beta\not=0} w_\beta(h)}{1+  2D_\gamma^n\Gamma(2-\alpha)\tau^\alpha\sum_{\beta\not=0} w_\beta(h)}\right\} \|\psi-\varphi\|_{\ell^\infty(h\Z^d)}\\
	&\leq  \frac{2\|D^n\|_{\ell^\infty(h\Z^d)}\Gamma(2-\alpha)\tau^\alpha\sum_{\beta\not=0} w_\beta(h)}{1+  2\|D^n\|_{\ell^\infty(h\Z^d)}\Gamma(2-\alpha)\tau^\alpha\sum_{\beta\not=0} w_\beta(h)} \|\psi-\varphi\|_{\ell^\infty(h\Z^d)}\\
	&=L  \|\psi-\varphi\|_{\ell^\infty(h\Z^d)},
\end{split}
\]
with $L<1$. Thus, by the Banach fixed point theorem, there exists a unique solution to the problem.
\end{proof}

\subsection{Properties of the numerical scheme: Equicontinuity in space}
First we show stability of $L_h$ for our scheme. More precisely:
\begin{lem}\label{lem:decLh}
Assume \eqref{As:wepos}, \eqref{As:webdd} for some $r\in[0,2]$, and $h,\tau>0$. Let $U^0\in BUC(\R^d)$, $D\in C_b([0,T]: X^r_b(\R^d))$ nonnegative, and $U^n$ satisfy \eqref{eq:NumSch}. Then, there exists $\tau_0$ such that, for all $\tau\leq \tau_0$, we have
\begin{equation}
	\|L_hU^n\|_{L^\infty(\R^d)} \leq C \|L_hU^0\|_{L^\infty(\R^d)}, \quad \textup{for all} \quad t_n\in [0,T],
\end{equation}
where $C=C(\alpha,r, T, \|D\|_{L^\infty(Q_T)},\|D\|_{X^r(Q_T)})>0$.
\end{lem}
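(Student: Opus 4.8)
The plan is to apply $L_h$ to the scheme \eqref{eq:NumSch}, derive from it a perturbed (commuted) scheme for $W^n:=L_hU^n$, and then close the estimate by a discrete maximum principle together with the discrete fractional Gr\"onwall inequality of Appendix~\ref{app:Gronwall}. Since $L_h$ acts in the space variable with coefficients independent of $x$ while $\partial_\tau^\alpha$ acts only in time, the two operators commute, so applying $L_h$ to \eqref{eq:NumSch} gives, for $n\ge 1$,
\[
\partial_\tau^\alpha W^n(x)=L_h[D^nL_hU^n](x)=D^n(x)\,L_hW^n(x)+G^n(x),\qquad G^n:=L_h[D^nW^n]-D^nL_hW^n .
\]
A discrete Leibniz computation, using the symmetry $\omega_\beta(h)=\omega_{-\beta}(h)$, yields
\[
G^n(x)=W^n(x)\,L_hD^n(x)+\sum_{\beta\neq 0}\big[(D^n(x+z_\beta)-D^n(x))(W^n(x+z_\beta)-W^n(x))+(z_\beta\to-z_\beta)\big]\omega_\beta(h).
\]
By Remark~\ref{rem:boundwei}, $\|W^nL_hD^n\|_{L^\infty}\le C_r(\|D^n\|_{X^r(\R^d)}+4\|D^n\|_{L^\infty})\|W^n\|_{L^\infty}$, and $\|D^n\|_{X^r(\R^d)}\le\|D\|_{X^r(Q_T)}$, so this term is immediately under control.

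For the bilinear sum, if $r\in(0,1]$ one simply uses $|D^n(x\pm z_\beta)-D^n(x)|\le\max\{2\|D^n\|_{L^\infty},[D^n]_{C^r}\}\min\{|z_\beta|^r,1\}$ together with $|W^n(x\pm z_\beta)-W^n(x)|\le 2\|W^n\|_{L^\infty}$ and \eqref{As:webdd}, to obtain $\|G^n\|_{L^\infty}\le C_D\|W^n\|_{L^\infty}$ with $C_D=C(\alpha,r,\|D\|_{L^\infty(Q_T)},\|D\|_{X^r(Q_T)},C_r)$. For $r\in(1,2]$ this is too crude, since $\sum_{0<|z_\beta|<1}|z_\beta|\,\omega_\beta(h)$ need not be bounded uniformly in $h$ (for the standard Laplacian stencil it is of size $h^{-1}$); splitting the sum at $|z_\beta|=1$, treating the tail as above, and Taylor-expanding $D^n(x\pm z_\beta)-D^n(x)=\pm\nabla D^n(x)\cdot z_\beta+O(\|D^n\|_{X^r}|z_\beta|^r)$ for $|z_\beta|<1$, the $O(|z_\beta|^r)$ remainders are summable against $\omega_\beta(h)$, but there remains the ``order-one'' term $\nabla D^n(x)\cdot\sum_{0<|z_\beta|<1}z_\beta\,(W^n(x+z_\beta)-W^n(x-z_\beta))\,\omega_\beta(h)$, which is \emph{not} controlled by $\|W^n\|_{L^\infty}$ and can only be absorbed into the dissipative term $D^nL_hW^n$, and only at the point where $|W^n|$ is (almost) maximized.

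So for the maximum principle step I would fix $n$, pick $x^\ast$ with $|W^n(x^\ast)|\ge\|W^n\|_{L^\infty}-\varepsilon$, and assume $W^n(x^\ast)>0$ (the other sign is symmetric). At $x^\ast$ one has $\delta_{z_\beta}W^n(x^\ast):=W^n(x^\ast+z_\beta)+W^n(x^\ast-z_\beta)-2W^n(x^\ast)\le 2\varepsilon$ and $|W^n(x^\ast\pm z_\beta)-W^n(x^\ast)|\le-\delta_{z_\beta}W^n(x^\ast)+C\varepsilon$; since $D^n\ge0$, this gives $D^n(x^\ast)L_hW^n(x^\ast)\le C(h)\varepsilon$. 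For $r\in(1,2]$, plugging the second inequality into the order-one term and using Young's inequality together with $\sum_{0<|z_\beta|<1}|z_\beta|^2\omega_\beta(h)\le C_r$ bounds it by $\tfrac12D^n(x^\ast)(-L_hW^n(x^\ast))+C\tfrac{|\nabla D^n(x^\ast)|^2}{D^n(x^\ast)}\|W^n\|_{L^\infty}+C(h)\varepsilon$; the first part is absorbed into $D^n(x^\ast)L_hW^n(x^\ast)$ and the middle one is harmless because $|\nabla D^n|^2\lesssim\|D^n\|_{X^r}D^n$ for nonnegative $D^n\in X^2_b$ (with the corresponding sublinear bound for $r\in(1,2)$). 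In all cases one ends up with $\operatorname{sgn}(W^n(x^\ast))\,\partial_\tau^\alpha W^n(x^\ast)\le C_D\|W^n\|_{L^\infty}+C(h)\varepsilon$.

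Finally, since the L1 weights $b_{n-1}$ and $b_{n-k-1}-b_{n-k}$ in \eqref{def:disccap} are nonnegative and (by telescoping) sum to $b_0=1$, $\partial_\tau^\alpha v^n$ equals $\tau^{-\alpha}\Gamma(2-\alpha)^{-1}$ times the difference of $v^n$ and a convex combination of $v^0,\dots,v^{n-1}$; hence, writing $P^n:=\|W^n\|_{L^\infty(\R^d)}$, $\partial_\tau^\alpha P^n\le\partial_\tau^\alpha|W^n(x^\ast)|\le\operatorname{sgn}(W^n(x^\ast))\,\partial_\tau^\alpha W^n(x^\ast)$, and letting $\varepsilon\to0$ gives $\partial_\tau^\alpha P^n\le C_DP^n$ for $n\ge1$, with $P^0=\|L_hU^0\|_{L^\infty}<\infty$ by \eqref{As:wepos}. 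For $\tau\le\tau_0(\alpha,C_D)$ — which makes the factor $1-C_D\Gamma(2-\alpha)\tau^\alpha$ appearing in the Gr\"onwall induction positive — Appendix~\ref{app:Gronwall} yields $P^n\le P^0\,E_\alpha(Ct_n^\alpha)\le P^0\,E_\alpha(CT^\alpha)$ for all $t_n\in[0,T]$, which is exactly the assertion. The main obstacle is the commutator estimate when $r\in(1,2]$: the order-one term there is genuinely not dominated by $\|W^n\|_{L^\infty}$, and disposing of it requires the interplay, at the extremum, between the H\"older regularity of $D^n$ near its zeros, Young's inequality, and absorption into the dissipative part of the scheme.
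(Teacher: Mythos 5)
Your overall architecture (a pointwise argument at a near-supremum point of $W^n=L_hU^n$ yielding a discrete Caputo differential inequality for $\|L_hU^n\|_{L^\infty(\R^d)}$, followed by the Gr\"onwall inequality \eqref{eq:GrongCap1} under a smallness condition on $\tau$) coincides with the paper's, and your handling of the cases $r\in(0,1]$ and $r=2$ can be made to work. The genuine gap is in the commutator estimate for $r\in(1,2)$: your absorption step leaves the term $C\,|\nabla D^n(x^\ast)|^2/D^n(x^\ast)\,\|W^n\|_{L^\infty}$, which you dismiss by invoking $|\nabla D|^2\lesssim\|D\|_{X^r}D$ ``with the corresponding sublinear bound for $r\in(1,2)$''. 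That quadratic bound is only valid for $r=2$. For $r\in(1,2)$, nonnegativity of $D$ only gives $|\nabla D(x)|\lesssim[\nabla D]_{C^{r-1}}^{1/r}\,D(x)^{(r-1)/r}$, hence $|\nabla D|^2/D\lesssim D^{(r-2)/r}$, which is unbounded near zeros of $D$ --- and degenerate diffusivities with zeros are exactly what the lemma allows (cf.\ the compactly supported example \eqref{eq:DiffusivityExact}). So, as written, the key inequality $\operatorname{sgn}(W^n(x^\ast))\,\partial_\tau^\alpha W^n(x^\ast)\leq C_D\|W^n\|_{L^\infty}+C(h)\varepsilon$ is not established in this range. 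The step is repairable: apply Young's inequality with exponents $r/(r-1)$ and $r$ to $|\nabla D^n(x^\ast)|\,|z_\beta|\,(-\delta_{z_\beta}W^n(x^\ast))\lesssim D^n(x^\ast)^{(r-1)/r}|z_\beta|\,(-\delta_{z_\beta}W^n(x^\ast))$, absorbing a fraction of $D^n(x^\ast)(-L_hW^n(x^\ast))$ and leaving $|z_\beta|^r(-\delta_{z_\beta}W^n(x^\ast))\lesssim|z_\beta|^r\|W^n\|_{L^\infty}$, summable by \eqref{As:webdd}; but this is not what you wrote, and the degeneracy estimate for $\nabla D$ would itself still have to be proved.

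You should also know that the paper bypasses the Leibniz/commutator decomposition entirely, which makes all of this machinery unnecessary. Instead of applying $L_h$ to the scheme, it combines \eqref{eq:NumSch} at $x$, $x+z_\beta$, $x-z_\beta$, multiplies by $\omega_\beta(h)$, and regroups the right-hand side as
\[
D^n(x+z_\beta)\bigl(L_hU^n(x+z_\beta)-L_hU^n(x)\bigr)+D^n(x-z_\beta)\bigl(L_hU^n(x-z_\beta)-L_hU^n(x)\bigr)+\bigl(D^n(x+z_\beta)+D^n(x-z_\beta)-2D^n(x)\bigr)L_hU^n(x),
\]
so that after summing in $\beta$ the only term requiring a bound uniform in $h$ is $L_hD^n(x)\,L_hU^n(x)$, controlled by Remark \ref{rem:boundwei} because $D\in X^r_b$ with the same $r$ as in \eqref{As:webdd}; the first two groups have a favorable sign at the near-supremum point of $L_hU^n$ since $D\geq0$ and $\omega_\beta(h)\geq0$. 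This works uniformly for all $r\in[0,2]$, with no Taylor expansion, no case distinction, and no quantitative information on the zeros of $D$, and it directly yields \eqref{eq:CaputoInequality}, after which the Gr\"onwall step concludes exactly as in your last paragraph.
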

\begin{proof}
For any $\beta\in \Z^d$, we combine equation \eqref{eq:NumSch} posed in $x$, $x+z_\beta$ and $x-z_\beta$ and multiply by $\omega_\beta(h)$ to get
\[
\begin{split}
	\partial_\tau^\alpha ((U^n(x+z_\beta)&+U^n(x-z_\beta)-2U^n(x))\omega_\beta(h) )\\
	&=(D^n(x+z_\beta)L_h U^n(x+z_\beta)+D^n(x-z_\beta)L_h U^n(x-z_\beta)- 2D^n(x)L_h U^n(x)) \omega_\beta(h).
\end{split}
\]
Reordering the above leads to
\[
\begin{split}
	\partial_\tau^\alpha &((U^n(x+z_\beta)+U^n(x-z_\beta)-2U^n(x))\omega_\beta(h) )\\
	&+ D^n(x+z_\beta) (L_h U^n(x)-L_h U^n(x+z_\beta))\omega_\beta(h)+ D^n(x-z_\beta) (L_h U^n(x)-L_h U^n(x-z_\beta))\omega_\beta(h)\\
	=&(D^n(x+z_\beta)+D^n(x-z_\beta)- 2D^n(x)) \omega_\beta(h) L_h U^n(x).
\end{split}
\]
Summing in $\beta$ and writing explicitly $\partial_\tau^\alpha$, we get
\[
\begin{split}
	&\frac{\tau^{-\alpha}}{\Gamma(2-\alpha)} L_hU^n(x) \\
	&+ \sum_{\beta\not=0} \left(D^n(x+z_\beta) (L_h U^n(x)-L_h U^n(x+z_\beta))\omega_\beta(h)+ D^n(x-z_\beta) (L_h U^n(x)-L_h U^n(x-z_\beta))\omega_\beta (h)\right)\\
	=&L_hD^n(x) L_h U^n(x) + \frac{\tau^{-\alpha}}{\Gamma(2-\alpha)} \left(b_{n-1}L_hU^0(x) + \sum_{k=1}^{n-1} (b_{n-k-1} - b_{n-k}) L_hU^k(x)\right).
\end{split}
\]
Now, by Theorem \ref{thm:exisun} and \eqref{As:wepos}, we can consider a sequence $\{x_\veps\}_{\veps>0}\subset \mathbb{R}^d$ such that
\[
L_hU^n(x_\veps) \to \esssup_{x\in \mathbb{R}^d} \{L_hU^n(x)\}\quad \textup{as} \quad \veps\to0^+.
\]
Note that, in this way,
\[
\lim_{\veps\to0^+}(L_h U^n(x_\veps)-L_h U^n(x_\veps-z_\beta))\geq0, \quad \textup{for all} \quad \beta \in \Z^d.
\]
Using the above information, we can conclude that
\[
\begin{split}
	\frac{\tau^{-\alpha}}{\Gamma(2-\alpha)} \esssup_{x\in \mathbb{R}^d} \{L_hU^n(x)\} \leq&  \frac{\tau^{-\alpha}}{\Gamma(2-\alpha)} \left( b_{n-1}\|L_hU^0\|_{L^\infty(\R^d)} + \sum_{k=1}^{n-1} (b_{n-k-1} - b_{n-k}) \|L_hU^k\|_{L^\infty(\R^d)}\right)\\
	&+\|L_hD^n\|_{L^\infty(\R^d)}\|L_h U^n\|_{L^\infty(\R^d)}.
\end{split}
\]
In a similar way, we can conclude
\[
\begin{split}
	\frac{\tau^{-\alpha}}{\Gamma(2-\alpha)} \essinf_{x\in \mathbb{R}^d} \{L_hU^n(x)\} \geq&  -\frac{\tau^{-\alpha}}{\Gamma(2-\alpha)} \left( b_{n-1}\|L_hU^0\|_{L^\infty(\R^d)}+ \sum_{k=1}^{n-1} (b_{n-k-1} - b_{n-k}) \|L_hU^k\|_{L^\infty(\R^d)}\right)\\
	&-\|L_hD^n\|_{L^\infty(\R^d)}\|L_h U^n\|_{L^\infty(\R^d)},
\end{split}
\]
which implies
\[
\begin{split}
	\frac{\tau^{-\alpha}}{\Gamma(2-\alpha)} \|L_hU^n\|_{L^\infty(\R^d)} \leq&  \frac{\tau^{-\alpha}}{\Gamma(2-\alpha)} \left( b_{n-1}\|L_hU^0\|_{L^\infty(\R^d)} + \sum_{k=1}^{n-1} (b_{n-k-1} - b_{n-k}) \|L_hU^k\|_{L^\infty(\R^d)}\right)\\
	&+\|L_hD^n\|_{L^\infty(\R^d)}\|L_h U^n\|_{L^\infty(\R^d)},
\end{split}
\]
that is,
\begin{equation}\label{eq:CaputoInequality}
	\partial_\tau^\alpha(\|L_hU^n\|_{L^\infty(\R^d)})\leq \|L_hD^n\|_{L^\infty(\R^d)}\|L_h U^n\|_{L^\infty(\R^d)}.
\end{equation}
By assumption \eqref{As:webdd}, the regularity of $D$ and Remark \ref{rem:boundwei}, we get
\[
\partial_\tau^\alpha(\|L_hU^n\|_{L^\infty(\R^d)})\leq C_r (\| D\|_{C_b([0,T]: X^r(\R^d))} +4 \| D\|_{L^\infty(Q_T)}  )\|L_h U^n\|_{L^\infty(\R^d)}.
\]
By Gr\"onwall inequality \eqref{eq:GrongCap1} with $\lambda_0=C_r (\| D\|_{C_b([0,T]: X^r(\R^d))} +4 \| D\|_{L^\infty(Q_T)}  )$, $\lambda_1=0$, $F^n=0$ and $\tau_0 =1/(2\lambda_0 \Gamma(2-\alpha))^{\frac{1}{\alpha}}$, we get
\[
\|L_hU^n\|_{L^\infty(\R^d)} \leq 2 E_\alpha({C t_n^\alpha })\|L_hU^0\|_{L^\infty(\R^d)}
\]
where the constant $C$ is the one specified in \eqref{eq:GrongCap1}. The result follows since the Mittag-Leffler function is nondecreasing.

\end{proof}

We give now our first partial result on equicontinuity in space. We note that the modulus of continuity will not be uniform in the discretization parameters unless we have a uniform bound of $\|L_h U^0\|_{L^\infty(\R^d)} $. We specify this later in a corollary. 

\begin{thm}\label{thm:equicont1}
Assume \eqref{As:wepos}, \eqref{As:webdd} for some $r\in[0,2]$ and $h,\tau>0$. Let $U^0\in BUC(\R^d)$, $D\in C_b([0,T]: X^r_b(\R^d))$ nonnegative, and $U^n$ satisfy \eqref{eq:NumSch}. Then, 
\[
\|U^n(\cdot+y)-U^n\|_{L^\infty(\R^d)}\leq \|U^0(\cdot+y)-U^0\|_{L^\infty(\R^d)}+  C \|L_h U^0\|_{L^\infty(\R^d)} t_n^{\alpha} \sup_{t\in[0,T]} \|D(\cdot+y,t)-D(\cdot,t)\|_{L^\infty(\R^d)} ,
\]
where $C=C(\alpha,r, T, \|D\|_{L^\infty(Q_T)},\|D\|_{X^r(Q_T)})>0$.
\end{thm}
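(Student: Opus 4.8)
The plan is to reproduce the strategy of Lemma~\ref{lem:decLh}, but applied to the spatial increment $W^n(x):=U^n(x+y)-U^n(x)$ instead of to $L_hU^n$. The key observation is that the stencil $\{z_\beta\}$ of $L_h$ is translation invariant, so $L_h[U^n(\cdot+y)](x)=(L_hU^n)(x+y)$, and $L_h$ is linear; moreover $\partial_\tau^\alpha$ acts linearly on the time sequence. Writing the scheme \eqref{eq:NumSch} at the point $x+y$ and at the point $x$, subtracting, and adding and subtracting $D^n(x+y)L_hU^n(x)$, I would obtain the identity
\[
\partial_\tau^\alpha W^n(x)=D^n(x+y)\,L_hW^n(x)+\bigl(D^n(x+y)-D^n(x)\bigr)L_hU^n(x),
\]
which is the exact analogue of the reordered identity in the proof of Lemma~\ref{lem:decLh}: a nonnegative coefficient $D^n(x+y)$ in front of $L_hW^n$, plus an inhomogeneity bounded in $L^\infty$ by $\|D^n(\cdot+y)-D^n\|_{L^\infty(\R^d)}\,\|L_hU^n\|_{L^\infty(\R^d)}$.

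Next I would run the nonlocal maximum-principle argument verbatim from Lemma~\ref{lem:decLh}. Writing $\partial_\tau^\alpha$ out explicitly and choosing a sequence $x_\veps$ along which $W^n(x_\veps)\to\esssup_{x}W^n(x)$, I would use $\omega_\beta(h)\geq0$ together with $W^n(x_\veps\pm z_\beta)\leq\esssup W^n$ to conclude $\limsup_{\veps\to0^+}L_hW^n(x_\veps)\leq 0$, hence, since $D\geq0$, that $\limsup_{\veps\to0^+}D^n(x_\veps+y)L_hW^n(x_\veps)\leq 0$. Since the L1 weights $b_{n-1}$ and $b_{n-k-1}-b_{n-k}$ are nonnegative (the map $i\mapsto i^{1-\alpha}$ is concave for $\alpha\in(0,1)$), the memory terms are bounded above by $\|W^k\|_{L^\infty(\R^d)}$. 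Performing the symmetric argument with $\essinf$, I arrive at the scalar fractional-difference inequality
\[
\partial_\tau^\alpha\bigl(\|W^n\|_{L^\infty(\R^d)}\bigr)\leq \|D^n(\cdot+y)-D^n\|_{L^\infty(\R^d)}\,\|L_hU^n\|_{L^\infty(\R^d)},
\]
which plays the role of \eqref{eq:CaputoInequality}.

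To close the estimate I would bound the right-hand side using Lemma~\ref{lem:decLh}, i.e.\ $\|L_hU^n\|_{L^\infty(\R^d)}\leq C\|L_hU^0\|_{L^\infty(\R^d)}$ for $t_n\in[0,T]$, together with the trivial bound $\|D^n(\cdot+y)-D^n\|_{L^\infty(\R^d)}\leq\sup_{t\in[0,T]}\|D(\cdot+y,t)-D(\cdot,t)\|_{L^\infty(\R^d)}$. Then the forcing term is dominated by the $n$-independent quantity $M:=C\|L_hU^0\|_{L^\infty(\R^d)}\sup_{t\in[0,T]}\|D(\cdot+y,t)-D(\cdot,t)\|_{L^\infty(\R^d)}$, and the discrete fractional Gr\"onwall inequality \eqref{eq:GrongCap1} with $\lambda_0=\lambda_1=0$ and $F^n\equiv M$ yields $\|W^n\|_{L^\infty(\R^d)}\leq\|W^0\|_{L^\infty(\R^d)}+CMt_n^\alpha$. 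Recognizing $\|W^0\|_{L^\infty(\R^d)}=\|U^0(\cdot+y)-U^0\|_{L^\infty(\R^d)}$ gives exactly the stated estimate.

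The main obstacle is the passage to the limit along the near-maximizing sequence in the second step: the supremum of $W^n$ need not be attained and the factor $D^n(\cdot+y)$ is only nonnegative rather than constant, so one must argue with limit superiors and exploit $D\geq0$ and $\omega_\beta(h)\geq0$ simultaneously; this is, however, precisely the device already used in Lemma~\ref{lem:decLh}, so it is routine rather than genuinely new. A secondary point is to check that the smallness restriction $\tau\leq\tau_0$ required to invoke Lemma~\ref{lem:decLh} does no harm: on the fixed interval $[0,T]$ it only affects the value of the constant $C$.
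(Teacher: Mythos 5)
Your proposal is correct and follows essentially the same route as the paper: the same identity $\partial_\tau^\alpha(\delta_y U^n)=D^n(\cdot+y)L_h(\delta_y U^n)+\delta_y D^n\,L_h U^n$, the same $\esssup$/$\essinf$ maximum-principle argument borrowed from Lemma~\ref{lem:decLh}, the bound $\|L_hU^n\|_{L^\infty(\R^d)}\leq C\|L_hU^0\|_{L^\infty(\R^d)}$ from that lemma, and a discrete Caputo Gr\"onwall step (the paper cites \eqref{eq:Gronw2}, which is just the specialization of \eqref{eq:GrongCap1} you invoke). The remark about $\tau\leq\tau_0$ entering only through the constant is consistent with how the paper handles it in the subsequent corollaries.
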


\begin{proof}
For every $y\in \mathbb{R}^d$, define $\delta_y U^n(x)=U^n(x+y)-U^n(x) $. We combine equation \eqref{eq:NumSch} posed in $x$, $x+y$ to get
\[
\partial_\tau^\alpha(\delta_y U^n(x))- D^n(x+y) L_h U^n(x+y)+D^n(x) L_h U^n(x)=0,
\]
that is,
\[
\partial_\tau^\alpha(\delta_y U^n(x))+ D^n(x+y) (L_h U^n(x)-L_h U^n(x+y))-\delta_y D^n(x) L_h U^n(x)=0,
\]
where we have used that $\partial_\tau^\alpha U^n(x+y) -\partial_\tau^\alpha U^n(x)=\partial_\tau^\alpha(\delta_y U^n(x))$. We also note that
\[
\begin{split}
	L_h U^n(x)-L_h U^n(x+y)&=\sum_{\beta\not=0} \left(U^n(x+y_\beta)-U^n(x)\right)\omega_\beta(h)-\sum_{\beta\not=0}\left(U^n(x+y+y_\beta)-U^n(x+y)\right)\omega_\beta(h)\\
	&=\sum_{\beta\not=0}\left( \delta_y U^n(x) -\delta_y U^n(x+y_\beta) \right)\omega_\beta(h).
\end{split}
\]
Therefore,
\[ 
\partial_\tau^\alpha(\delta_y U^n(x))+ D^n(x+y) \sum_{\beta\not=0} \left( \delta_y U^n(x) -\delta_y U^n(x+y_\beta) \right)\omega_\beta(h)-\delta_y D^n(x) L_h U^n(x)=0.
\]
Arguing as in the proof of Lemma \ref{lem:decLh} for the $\esssup$ and $\essinf$ of $\delta_y U^n$, and using Lemma \ref{lem:decLh}, we get
\[
\begin{split}
	\partial_\tau^\alpha(\|\delta_y U^n\|_{L^\infty(\R^d)})&\leq  \|\delta_y D^n\|_{L^\infty(\R^d)} \|L_h U^n\|_{L^\infty(\R^d)}\leq C \|D(\cdot+y,\cdot)-D\|_{L^\infty(Q_T)} \|L_h U^0\|_{L^\infty(\R^d)}.
\end{split}
\]
Finally, using Gr\"onwall inequality \eqref{eq:Gronw2}, we get
\[
\|\delta_y U^n\|_{L^\infty(\R^d)} \leq \|\delta_y U^0\|_{L^\infty(\R^d)}+ \frac{\Gamma(2-\alpha)}{\alpha}C \|D(\cdot+y,\cdot)-D\|_{L^\infty(Q_T)} \|L_h U^0\|_{L^\infty(\R^d)}t_n^{\alpha},
\]
which finishes the proof. 
\end{proof}

We present now the proof of equicontinuity in space when the initial data $U^0\in X^r_b(\R^d)$, the natural space for which $L_h$ is bounded uniformly in $h$.
\begin{cor}
Assume \eqref{As:wepos}, \eqref{As:webdd} for some $r\in[0,2]$ and $h,\tau>0$. Let $U^0\in  X^r_b(\R^d)$, $D\in C_b([0,T]: X^r_b(\R^d))$ nonnegative, and $U^n$ satisfy \eqref{eq:NumSch}. Then, there exists $\tau_0$ such that for all $\tau\leq \tau_0$, we have
\[
\|U^n(\cdot+y)-U^n\|_{L^\infty(\R^d)}\leq \|U^0(\cdot+y)-U^0\|_{L^\infty(\R^d)}+  C t_n^{\alpha} \sup_{t\in[0,T]} \|D(\cdot+y,t)-D(\cdot,t)\|_{L^\infty(\R^d)} ,
\]
where $C=C(\alpha,r, T, \|D\|_{L^\infty(Q_T)},\|D\|_{X^r(Q_T)}, \|U^0\|_{L^\infty(\R^d)},\|U^0\|_{X^r(\R^d)})>0$.
\end{cor}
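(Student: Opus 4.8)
The corollary is an immediate specialization of Theorem~\ref{thm:equicont1}: the only new ingredient is a uniform-in-$h$ bound on $\|L_h U^0\|_{L^\infty(\R^d)}$, which is exactly where the hypothesis $U^0\in X^r_b(\R^d)$ (rather than merely $U^0\in BUC(\R^d)$) comes in. So the plan is first to invoke Remark~\ref{rem:boundwei}, which under \eqref{As:wepos} and \eqref{As:webdd} gives
\[
\|L_h U^0\|_{L^\infty(\R^d)} \leq C_r\bigl(\|U^0\|_{X^r(\R^d)} + 4\|U^0\|_{L^\infty(\R^d)}\bigr),
\]
a bound independent of $h$. Then one plugs this estimate into the conclusion of Theorem~\ref{thm:equicont1} (valid since $X^r_b(\R^d)\subset BUC(\R^d)$ for $r\in(0,2]$, and for $r=0$ one treats $X^0_b$ as $\mathcal B(\R^d)$ with only the $L^\infty$ norm, or simply restricts to $r\in(0,2]$ as the other statements do), absorbing the new constant $C_r(\|U^0\|_{X^r(\R^d)}+4\|U^0\|_{L^\infty(\R^d)})$ into the constant $C$. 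This produces the claimed inequality with $C$ now additionally depending on $\|U^0\|_{L^\infty(\R^d)}$ and $\|U^0\|_{X^r(\R^d)}$.

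There is, however, a subtlety: Theorem~\ref{thm:equicont1} as stated carries no restriction $\tau\leq\tau_0$, whereas the corollary asserts one. The reason is that the proof of Theorem~\ref{thm:equicont1} uses Lemma~\ref{lem:decLh} to bound $\|L_h U^n\|_{L^\infty(\R^d)}$ by $C\|L_h U^0\|_{L^\infty(\R^d)}$, and Lemma~\ref{lem:decLh} itself requires $\tau\leq\tau_0$ with $\tau_0 = (2\lambda_0\Gamma(2-\alpha))^{-1/\alpha}$ for $\lambda_0 = C_r(\|D\|_{C_b([0,T]:X^r(\R^d))}+4\|D\|_{L^\infty(Q_T)})$. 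So I would simply carry that same $\tau_0$ through: for $\tau\leq\tau_0$, Lemma~\ref{lem:decLh} applies, Theorem~\ref{thm:equicont1} applies, and the substitution above is legitimate.

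I do not expect any genuine obstacle here — this is a bookkeeping corollary. The one thing to be careful about is the dependence list of the final constant $C$: it inherits $\alpha,r,T,\|D\|_{L^\infty(Q_T)},\|D\|_{X^r(Q_T)}$ from Theorem~\ref{thm:equicont1} and Lemma~\ref{lem:decLh}, plus $\|U^0\|_{L^\infty(\R^d)}$ and $\|U^0\|_{X^r(\R^d)}$ from the Remark~\ref{rem:boundwei} estimate, which matches the statement. A one-line proof suffices: quote Remark~\ref{rem:boundwei} for the uniform bound on $\|L_hU^0\|_{L^\infty(\R^d)}$, then quote Theorem~\ref{thm:equicont1}.
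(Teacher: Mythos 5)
Your proposal is correct and follows exactly the paper's own argument: the paper's proof is precisely the one-line combination of Theorem \ref{thm:equicont1} with the uniform-in-$h$ bound on $\|L_hU^0\|_{L^\infty(\R^d)}$ from Remark \ref{rem:boundwei}. Your additional remark tracing the restriction $\tau\le\tau_0$ back to Lemma \ref{lem:decLh} (used inside the proof of Theorem \ref{thm:equicont1}) is accurate bookkeeping and consistent with the paper.
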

\begin{proof}
The result follows from Theorem \ref{thm:equicont1} and the uniform bound of $L_h$ given in Remark \ref{rem:boundwei}.
\end{proof}

We can also prove such results for less regular initial data. First, we need the following $L^\infty$ contraction result. 

\begin{lem}\label{lem:contraction}
Assume \eqref{As:wepos} and $h,\tau>0$. Let  $D\in \mathcal{B}(Q_T)$ nonnegative and $U^n,V^n$ satisfy \eqref{eq:NumSch} with initial data $U^0,V^0\in \mathcal{B}(\R^d)$, respectively. Then,
\[
\|U^n-V^n\|_{L^\infty(\R^d)}\leq \|U^0-V^0\|_{L^\infty(\R^d)}.
\]
\end{lem}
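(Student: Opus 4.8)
The plan is to combine the linearity of the scheme with the discrete maximum-principle argument already used in the proof of Lemma \ref{lem:decLh}. Set $W^n := U^n - V^n$. Since both $\partial_\tau^\alpha$ and $L_h$ are linear and the two solutions carry the same diffusivity $D$, subtracting the two copies of \eqref{eq:NumSch} shows that $W^n$ solves \eqref{eq:NumSch} with initial datum $W^0 = U^0 - V^0$; hence it suffices to prove $\|W^n\|_{L^\infty(\R^d)} \le \|W^0\|_{L^\infty(\R^d)}$ for all $n$.

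First I would derive a homogeneous discrete-Caputo inequality for $n\mapsto\|W^n\|_{L^\infty(\R^d)}$. Choose a sequence $\{x_\veps\}$ with $W^n(x_\veps)\to\esssup_{\R^d}W^n$, picked (exactly as in the proof of Lemma \ref{lem:decLh}) in the full-measure set on which $W^n$ stays below its essential supremum at all the countably many shifted points $x_\veps\pm z_\beta$. Then $L_hW^n(x_\veps)\le 2\big(\esssup_{\R^d}W^n-W^n(x_\veps)\big)\sum_{\beta\neq0}\omega_\beta(h)$ by positivity of the weights \eqref{As:wepos}, so $\limsup_\veps L_hW^n(x_\veps)\le 0$. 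Since $D^n\ge0$ is bounded and, for the L1 weights, $b_{n-1}\ge0$ and $b_{n-k-1}-b_{n-k}\ge0$ (concavity of $t\mapsto t^{1-\alpha}$), passing to the $\limsup$ in the rearranged scheme
\[
W^n(x_\veps)=\Gamma(2-\alpha)\tau^\alpha D^n(x_\veps)L_hW^n(x_\veps)+b_{n-1}W^0(x_\veps)+\sum_{k=1}^{n-1}(b_{n-k-1}-b_{n-k})W^k(x_\veps)
\]
gives $\esssup_{\R^d}W^n\le b_{n-1}\|W^0\|_{L^\infty(\R^d)}+\sum_{k=1}^{n-1}(b_{n-k-1}-b_{n-k})\|W^k\|_{L^\infty(\R^d)}$. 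The same argument applied to $-W^n$ (i.e.\ to the essential infimum) yields the identical bound for $-\essinf_{\R^d}W^n$, hence, in the notation of \eqref{def:disccap},
\[
\partial_\tau^\alpha\!\big(\|W^n\|_{L^\infty(\R^d)}\big)\le0,\qquad\text{i.e.}\qquad\|W^n\|_{L^\infty(\R^d)}\le b_{n-1}\|W^0\|_{L^\infty(\R^d)}+\sum_{k=1}^{n-1}(b_{n-k-1}-b_{n-k})\|W^k\|_{L^\infty(\R^d)}.
\]

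Finally I would close the recursion by a one-line induction on $n$, which makes any appeal to the Gr\"onwall inequalities of Appendix \ref{app:Gronwall} unnecessary here (the zeroth-order term and the forcing being absent): assuming $\|W^k\|_{L^\infty(\R^d)}\le\|W^0\|_{L^\infty(\R^d)}$ for all $k<n$, the last display gives $\|W^n\|_{L^\infty(\R^d)}\le\big(b_{n-1}+\sum_{k=1}^{n-1}(b_{n-k-1}-b_{n-k})\big)\|W^0\|_{L^\infty(\R^d)}$, and the bracket telescopes to $b_0=1$; the base case $n=0$ is trivial. Unlike in Lemma \ref{lem:decLh}, no smallness condition $\tau\le\tau_0$ is needed, since the right-hand side of the discrete-Caputo inequality is exactly zero. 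The only delicate point is the maximum-principle step: justifying $\limsup_\veps L_hW^n(x_\veps)\le0$ for a merely bounded (not continuous) $W^n$, which forces the choice of $\{x_\veps\}$ inside an intersection of countably many full-measure sets. This is, however, precisely the argument already carried out in the proofs of Lemma \ref{lem:decLh} and Theorem \ref{thm:equicont1}; everything else — linearity, the telescoping of the L1 weights, and the induction — is routine.
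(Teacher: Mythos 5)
Your proposal is correct and follows essentially the same route as the paper: define $W^n=U^n-V^n$, use linearity so that $W^n$ solves \eqref{eq:NumSch}, and run the $\esssup$/$\essinf$ maximum-principle argument of Lemma \ref{lem:decLh} to obtain $\partial_\tau^\alpha\big(\|W^n\|_{L^\infty(\R^d)}\big)\leq 0$. The only (harmless) deviation is in the last step, where the paper simply invokes the Caputo Gr\"onwall inequality \eqref{eq:Gronw2} with $G=0$, whereas you close the recursion directly by induction using the telescoping identity $b_{n-1}+\sum_{k=1}^{n-1}(b_{n-k-1}-b_{n-k})=b_0=1$; both give the same conclusion, and your observation that no smallness condition on $\tau$ is needed is consistent with the paper's application of \eqref{eq:Gronw2}.
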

\begin{proof}
Let $W^n=U^n-V^n$. By linearity we have
\[
\partial^\alpha_\tau W^n(x)= D^n(x) L_h W^n(x).
\]
Arguing as before, we can get
\[
\partial_{\tau}^\alpha(\|W^n\|_{L^\infty(\R^d)})\leq 0,
\]
which, by Gr\"onwall inequality \eqref{eq:Gronw2}, implies the result.
\end{proof}

We are now ready to prove our equicontinuity result for general initial data $U^0\in X^a_b(\R^d)$ for any $a\leq r$. To prove such result, we introduce the notation of modulus of continuity, i.e.,

\[
	\Lambda_{{U^0}}(\eta)=\sup_{|z|\leq \eta}\|U^0(\cdot+z)-U^0\|_{L^\infty(\R^d)} \quad \textup{and} \quad \Lambda_{D}(\eta) = \sup_{|z|\leq\eta} \sup_{t\in[0,T]} \|D(\cdot+z,t)-D(\cdot,t)\|_{L^\infty(\R^d)}.
\]

\begin{cor}\label{cor:regspace}
Assume \eqref{As:wepos}, \eqref{As:webdd} for some $r\in[0,2]$ and $h,\tau>0$. Let $U^0\in X^a_b(\R^d)$ for some $a\in(0,r]$, $D\in C_b([0,T]: X^r_b(\R^d))$ nonnegative, and $U^n$ satisfy \eqref{eq:NumSch}. Then, there exists $\tau_0$ such that, for all $\tau\leq \tau_0$, we have
\[
\|U^n(\cdot+y)-U^n\|_{L^\infty(\R^d)} \leq C \max\{1, t_{n}^\alpha\} \left\{ \begin{split}
	 \max\{|y|^a, |y|^r\}, \quad  & \textup{if} \quad r\in (0,1],\\
	 \max\{|y|^\frac{a}{r},|y|\}, \quad  & \textup{if} \quad r\in (1,2], \quad
\end{split}\right.
\]
for some constant $C=C(\alpha,r, T, \|D\|_{L^\infty(Q_T)},\|D\|_{X^r(Q_T)}, \|U^0\|_{L^\infty(\R^d)},\|U^0\|_{X^a(\R^d)})>0$.
\end{cor}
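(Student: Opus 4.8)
The plan is to reduce to the regular initial‑data case already settled in Theorem~\ref{thm:equicont1}, paying for the regularization with the $L^\infty$‑contraction of Lemma~\ref{lem:contraction}. First I would fix a symmetric mollifier $\rho\in C_c^\infty(\R^d)$ with $\rho\ge0$ and $\int\rho=1$, set $\rho_\veps(\cdot)=\veps^{-d}\rho(\cdot/\veps)$ and $U^0_\veps:=U^0*\rho_\veps$ for $\veps\in(0,1)$, and record the standard mollification estimates for $U^0\in X^a_b(\R^d)$ with $a\in(0,r]$:
\[
\|U^0-U^0_\veps\|_{L^\infty(\R^d)}\le C\veps^a\|U^0\|_{X^a(\R^d)},\qquad \|U^0_\veps\|_{L^\infty(\R^d)}\le\|U^0\|_{L^\infty(\R^d)},\qquad \|U^0_\veps\|_{X^r(\R^d)}\le C\veps^{a-r}\|U^0\|_{X^a(\R^d)},
\]
together with the elementary fact that $\|U^0_\veps(\cdot+z)-U^0_\veps\|_{L^\infty(\R^d)}\le\Lambda_{U^0}(|z|)$ for every $z$, since translation commutes with convolution. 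Combining the third estimate with Remark~\ref{rem:boundwei} and $\veps^{a-r}\ge1$ yields the bound that is genuinely new compared with Theorem~\ref{thm:equicont1}, namely $\|L_hU^0_\veps\|_{L^\infty(\R^d)}\le C\veps^{a-r}(\|U^0\|_{X^a(\R^d)}+\|U^0\|_{L^\infty(\R^d)})$ uniformly in $h\in(0,1)$; this is precisely the quantity that is not controllable for $a<r$ without regularizing.

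Next I would let $U^n_\veps$ solve \eqref{eq:NumSch} with datum $U^0_\veps\in BUC(\R^d)$ (Theorem~\ref{thm:exisun}), take $\tau\le\tau_0$ with $\tau_0$ coming from Lemma~\ref{lem:decLh}, and use the splitting
\[
U^n(\cdot+y)-U^n=\big(U^n-U^n_\veps\big)(\cdot+y)+\big(U^n_\veps(\cdot+y)-U^n_\veps\big)+\big(U^n_\veps-U^n\big).
\]
The contraction of Lemma~\ref{lem:contraction} (and translation‑invariance of the $L^\infty$‑norm) controls the first and third terms by $\|U^0-U^0_\veps\|_{L^\infty(\R^d)}\le C\veps^a\|U^0\|_{X^a(\R^d)}$, while Theorem~\ref{thm:equicont1} applied to the datum $U^0_\veps$, together with the estimates of the previous paragraph, controls the middle term by $\Lambda_{U^0}(|y|)+C\veps^{a-r}t_n^\alpha\Lambda_D(|y|)$. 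This gives
\[
\|U^n(\cdot+y)-U^n\|_{L^\infty(\R^d)}\le C\Big(\veps^a+\Lambda_{U^0}(|y|)+\veps^{a-r}t_n^\alpha\Lambda_D(|y|)\Big).
\]

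Finally I would optimize in $\veps$, using the Hölder moduli $\Lambda_{U^0}(\eta)\le C\|U^0\|_{X^a(\R^d)}\eta^{\min\{a,1\}}$ and $\Lambda_D(\eta)\le C\|D\|_{X^r(Q_T)}\eta^{\min\{r,1\}}$ for $\eta\le1$. For $|y|\ge1$ the claimed bound is immediate from $\|U^n\|_{L^\infty(\R^d)}\le\|U^0\|_{L^\infty(\R^d)}$ (Lemma~\ref{lem:contraction} with zero datum), since then its right‑hand side exceeds the trivial bound $2\|U^0\|_{L^\infty(\R^d)}$ as soon as $C$ is large enough. For $|y|<1$, when $r\in(0,1]$ (so $a\le r\le1$) I would choose $\veps=|y|$, which makes $\veps^a=\veps^{a-r}|y|^r=|y|^a=\max\{|y|^a,|y|^r\}$ and $\Lambda_{U^0}(|y|)\le C\|U^0\|_{X^a(\R^d)}|y|^a$; when $r\in(1,2]$ I would choose $\veps=|y|^{1/r}$, which makes $\veps^a=\veps^{a-r}|y|=|y|^{a/r}$, and since $a/r\le\min\{a,1\}$ we also get $\Lambda_{U^0}(|y|)\le C\|U^0\|_{X^a(\R^d)}|y|^{a/r}=C\|U^0\|_{X^a(\R^d)}\max\{|y|^{a/r},|y|\}$. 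In each regime the three terms combine to the factor $C(1+t_n^\alpha)\le C\max\{1,t_n^\alpha\}$ times the asserted power of $|y|$, with $C$ of the stated dependence. The only non‑routine ingredient is the mollification bound $\|U^0_\veps\|_{X^r(\R^d)}\le C\veps^{a-r}\|U^0\|_{X^a(\R^d)}$, which must be verified uniformly across the cases $a,r\le1$, $a\le1<r$ and $1<a\le r$ by interpolating the $X^a$‑seminorm control of $U^0$ against the $\veps^{-1}$‑per‑derivative gain of the mollifier; the splitting, the use of the contraction, and the choice of $\veps$ are all elementary once this is in place.
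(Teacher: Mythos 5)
Your proof is correct and follows essentially the same route as the paper: mollify $U^0$, compare $U^n$ with the mollified solution via the $L^\infty$-contraction of Lemma \ref{lem:contraction}, apply Theorem \ref{thm:equicont1} to the mollified datum using $\|L_hU^0_\veps\|_{L^\infty(\R^d)}\lesssim \veps^{a-r}$, and optimize with $\veps=|y|$ or $\veps=|y|^{1/r}$. The only (harmless) deviations are that you bound the translated-datum term by $\Lambda_{U^0}(|y|)$ directly rather than the paper's $C^1$-interpolation bound $K_1\Lambda_{U^0}(\delta)|y|\delta^{-1}$, and you dispose of $|y|\ge 1$ by the trivial sup-norm bound, both of which slightly streamline the case analysis without changing the argument.
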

\begin{proof}
The result will follow again from Theorem \ref{thm:equicont1}. Consider a standard mollifier $\rho_\delta$ for $\delta >0$ (see , e.g., Appendix A in \cite{dTLi22} for a suitable choice of the mollifying function). Let $U^0_\delta=U^0*\rho_\delta \in C^\infty_b(\R^d)$ and $(U_\delta)^n$ be the corresponding solution of \eqref{eq:NumSch}.
By Lemma \ref{lem:contraction},  the regularity of ${U^0}$, and the properties of the mollifiers, there exists a constant $L_{{U^0}}>0$ such that
\[
\|U^n-(U_\delta)^n\|_{L^\infty(\R^d)}\leq \|U^0-U^0_\delta\|_{L^\infty(\R^d)} \leq L_{{U^0}}\delta^a.
\]
On the other hand, by the regularity of $U^0$, we have (see, e.g., Lemma A.1 in \cite{dTLi22}) 
\[
\|U^0_\delta(\cdot+y)-U^0_\delta\|_{L^\infty(\R^d)}\leq \|U^0_\delta\|_{C^1(\R^d)}|y| \leq K_1 \Lambda_{{U^0}}(\delta)|y|\delta^{-1},
\]
where $K_1$ is a constant depending only on the dimension and the choice of the mollifier. Thus, by Theorem \ref{thm:equicont1}, and the regularity of $D$, there exists a constant $L_{D}>0$ such that
\[
\begin{split}
	\|(U_\delta)^n(\cdot+y)-(U_\delta)^n\|_{L^\infty(\R^d)}\leq& \|U^0_\delta(\cdot+y)-U^0_\delta\|_{L^\infty(\R^d)}\\
	&+  C \|L_h U^0_\delta\|_{L^\infty(\R^d)} t_n^{\alpha} \sup_{t\in[0,T]} \|D(\cdot+y,t)-D(\cdot,t)\|_{L^\infty(\R^d)}\\
	\leq&K_1 \Lambda_{{U^0}}(\delta) |y|\delta^{-1} + \tilde{C} (\|{U^0_\delta}\|_{X^r(\R^d)}+ \|U_\delta^0\|_{L^\infty(\R^d)})t_n^\alpha \Lambda_{D}(|y|)\\
	\leq& K_1  \Lambda_{{U^0}}(\delta) |y|\delta^{-1} + \tilde{C} (L_{{U^0}}\delta^{a-r}+\|U^0\|_{L^\infty(\R^d)} )t_n^\alpha L_D |y|^{\min\{1,r\}}.
\end{split}
\]
Using the above estimates, 
\[
\begin{split}
	\|U^n(\cdot+y)-U^n\|_{L^\infty(\R^d)}\leq& \|U^n(\cdot+y)-(U_\delta)^n(\cdot+y)\|_{L^\infty(\R^d)}+\|U^n-(U_\delta)^n\|_{L^\infty(\R^d)}\\
	&+\|(U_\delta)^n(\cdot+y)-(U_\delta)^n\|_{L^\infty(\R^d)}\\
	\leq& 2L_{{U^0}}\delta^a +  K_1  \Lambda_{{U^0}}(\delta) |y|\delta^{-1} + \tilde{C} (L_{{U^0}}\delta^{a-r}+ L_{{U^0}})t_n^\alpha L_D |y|^{\min\{1,r\}}.\end{split}
\]

If $r\in (0,1]$ (and, thus, $a\leq1$), we have $\Lambda_{{U^0}}(\delta)\leq L_{{U^0}} \delta^a$.   In what follows we will use $K$ to denote a positive constant, and it might change from inequality to inequality to keep the notation as simple a possible.  Then, the above estimate reads
\[
\|U^n(\cdot+y)-U^n\|_{L^\infty(\R^d)} \leq  K  \left(\delta^a+ |y| \delta^{a-1} +  t_{n}^{\alpha}(\delta^{a-r}+1) |y|^{r}\right).
\]
It is enough to take $\delta=|y|$ to get
\[
\|U^n(\cdot+y)-U^n\|_{L^\infty(\R^d)} \leq  K  ( |y|^a +  t_{n}^{\alpha}(|y|^a+|y|^r) ).
\]

If $r\in(1,2]$ then we need to consider two cases. If $a\leq1$, we have that
\[
\|U^n(\cdot+y)-U^n\|_{L^\infty(\R^d)} \leq   K  \left(\delta^a+ |y| \delta^{a-1} +  t_{n}^{\alpha}(\delta^{a-r}+1) |y|\right),
\]
and take $\delta=|y|^{\frac{1}{r}}$ to get
\[
\|U^n(\cdot+y)-U^n\|_{L^\infty(\R^d)} \leq   K  \left(|y|^{\frac{a}{r}}+ |y|^{\frac{a}{r}+1-\frac{1}{r}} +  t_{n}^{\alpha}(|y|^{\frac{a}{r}}+|y|)\right).
\]
Note that, if $|y|\leq1$, $|y|^{\frac{a}{r}}>|y|^{\frac{a}{r}+1-\frac{1}{r}}$, while if $|y|>1$, $|y|>|y|^{\frac{a}{r}+1-\frac{1}{r}} $ and the result follows.
Finally, if $a>1$, we have $\Lambda_{{U^0}}(\delta)\leq L_{{U^0}} \delta$. Then, taking again $\delta=|y|^{\frac{1}{r}}$
\[
\begin{split}
	\|U^n(\cdot+y)-U^n\|_{L^\infty(\R^d)} &\leq   K  \left( \delta^a + |y| +  t_{n}^{\alpha}(\delta^{a-r}+1) |y|\right)\\
	&\leq    K \left( |y|^{\frac{a}{r}} + |y| +  t_{n}^{\alpha}(|y|^{\frac{a}{r}}+|y|)\right).
\end{split}
\]
\end{proof}

\subsection{Properties of the numerical scheme: Equicontinuity in time}
In this section, we characterize the regularity in time of the solution $U^n$ of the numerical scheme \eqref{eq:NumSch}. We start by introducing the discrete (L1) Riemann-Liouville fractional derivative
\begin{equation}\label{eq:discRLder}
^{RL} \partial_\tau^\alpha f^n:= \frac{\tau^{-\alpha}}{\Gamma(2-\alpha)}\left(f^n - \sum_{k=0}^{n-1} (b_{n-k-1}-b_{n-k})f^k \right),
\end{equation}
where $b_i$ are as in \eqref{def:disccap}. The above formulation arises in the study of the difference of the discrete Caputo derivative.
\begin{lem}
Let us define $\delta_1 f^n=f^{n+1}-f^n$ for any sequence of numbers $(f^n)_n$. Then
\[
\delta_1 \partial_\tau^\alpha f^n= {^{RL}\partial_\tau^\alpha} \delta_1 f^n.
\]
\end{lem}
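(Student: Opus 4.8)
The plan is to reduce both sides to a common discrete convolution. First I would record the standard ``summation by parts'' form of the L1 Caputo derivative: since $b_0=(0+1)^{1-\alpha}-0^{1-\alpha}=1$, expanding \eqref{def:disccap} and reindexing shows
\[
\partial_\tau^\alpha f^n=\frac{\tau^{-\alpha}}{\Gamma(2-\alpha)}\sum_{k=0}^{n-1} b_k\,\delta_1 f^{\,n-k-1},
\]
that is, $\partial_\tau^\alpha f^n$ is the discrete convolution of the weights $(b_k)_{k\ge0}$ with the forward differences $(\delta_1 f^j)_j$. This is verified by substituting $j=n-1-k$, collecting consecutive-difference coefficients $b_{n-j-1}-b_{n-j}$, and using $b_0=1$ to recover the leading term $f^n$ together with the isolated term $-b_{n-1}f^0$ of \eqref{def:disccap}.

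Granting that identity, I would simply take the forward difference in $n$:
\[
\delta_1\partial_\tau^\alpha f^n=\partial_\tau^\alpha f^{n+1}-\partial_\tau^\alpha f^n
=\frac{\tau^{-\alpha}}{\Gamma(2-\alpha)}\Bigl(\sum_{k=0}^{n} b_k\,\delta_1 f^{\,n-k}-\sum_{k=0}^{n-1} b_k\,\delta_1 f^{\,n-k-1}\Bigr).
\]
Reindexing the first sum by $j=n-k$ and the second by $j=n-k-1$, both become sums over $j\in\{0,\dots\}$ paired with $\delta_1 f^j$; subtracting leaves the $j=n$ contribution $b_0\,\delta_1 f^n=\delta_1 f^n$ plus $\sum_{j=0}^{n-1}(b_{n-j}-b_{n-j-1})\,\delta_1 f^j$. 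Hence
\[
\delta_1\partial_\tau^\alpha f^n=\frac{\tau^{-\alpha}}{\Gamma(2-\alpha)}\Bigl(\delta_1 f^n-\sum_{j=0}^{n-1}(b_{n-j-1}-b_{n-j})\,\delta_1 f^j\Bigr),
\]
which is precisely $^{RL}\partial_\tau^\alpha\,\delta_1 f^n$ according to definition \eqref{eq:discRLder} applied to the sequence $(\delta_1 f^j)_j$.

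The computation is wholly elementary; the one point requiring care is the index bookkeeping at the endpoints of the sums — in particular, ensuring that the isolated term $b_{n-1}f^0$ of the Caputo formula \eqref{def:disccap} is correctly absorbed into the $k=0$ term (coefficient $b_{n-1}-b_n$) of the Riemann--Liouville formula \eqref{eq:discRLder}. Routing the argument through the convolution form makes this matching automatic. Alternatively one may subtract the two defining expressions \eqref{def:disccap} for $\partial_\tau^\alpha f^{n+1}$ and $\partial_\tau^\alpha f^n$ term by term; the same conclusion follows, but then the $k=1$ and $k=n-1$ boundary contributions must be peeled off by hand, which is the only mildly tedious part.
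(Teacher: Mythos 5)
Your proof is correct; the convolution identity you start from, $\partial_\tau^\alpha f^n=\frac{\tau^{-\alpha}}{\Gamma(2-\alpha)}\sum_{k=0}^{n-1} b_k\,\delta_1 f^{\,n-k-1}$, checks out (it is exactly the summation-by-parts rewriting of \eqref{def:disccap}, using $b_0=1$), and the subsequent reindexing correctly reproduces \eqref{eq:discRLder} applied to $(\delta_1 f^j)_j$. The paper proceeds differently in organization, though not in spirit: it subtracts the two Caputo expressions \eqref{def:disccap} for $\partial_\tau^\alpha f^{n+1}$ and $\partial_\tau^\alpha f^n$ directly in terms of the values $f^k$, shifts one summation index, and collects the increments $\delta_1 f^k$ in a single pass — precisely the ``alternative'' you mention at the end, including the boundary bookkeeping with the $f^0$ term. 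Your route buys a reusable intermediate fact, namely that the L1 Caputo derivative is the discrete convolution of the kernel $(b_k)$ with the forward differences, which makes the commutation with $\delta_1$ transparent (differencing a convolution only touches the top index) and hides the endpoint matching inside the proof of the convolution form; the paper's direct subtraction is shorter as a self-contained computation but handles the $f^0$ and $k=n$ boundary terms explicitly. Both arguments are elementary and complete; just make sure you state (or verify) the convolution identity for $n\ge 1$, which is the range in which the lemma is used.
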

\begin{proof}
By a straightforward computation we have
\[
\begin{split}
	\delta_1 \partial_\tau^\alpha f^n&= \partial_\tau^\alpha f^{n+1}-\partial_\tau^\alpha f^{n}\\
	&=\frac{\tau^{-\alpha}}{\Gamma(2-\alpha)}\left(\delta_1 f^n - f^0(b_n-b_{n-1}) -  \sum_{k=1}^n (b_{n-k}-b_{n+1-k})f^k + \sum_{k=1}^{n-1} (b_{n-k-1}-b_{n-k})f^k \right) \\
	&=\frac{\tau^{-\alpha}}{\Gamma(2-\alpha)}\left(\delta_1 f^n - f^0(b_n-b_{n-1}) -  \sum_{k=0}^{n-1} (b_{n-k-1}-b_{n-k})f^{k+1} + \sum_{k=1}^{n-1} (b_{n-k-1}-b_{n-k})f^k \right) \\
	&= \frac{\tau^{-\alpha}}{\Gamma(2-\alpha)}\left(\delta_1 f^n  - \sum_{k=0}^{n-1}(b_{n-k-1}-b_{n-k})\delta_1f^{k}\right)\\
	&={^{RL}\partial_\tau^\alpha} \delta_1 f^n.
\end{split}
\]
The proof is complete. 
\end{proof}
Consider now the main scheme \eqref{eq:NumSch}. Writing it once at time $t_n$ and once at $t_{n+1}$ and subtracting, we obtain, for $n\geq1$, the following equation
\[
\begin{split}
{^{RL}\partial_\tau^\alpha} \delta_1 U^n(x)
&= \partial_\tau^\alpha U^{n+1}-\partial_\tau^\alpha U^{n}\\
&= D^{n+1}(x) L_hU^{n+1}(x)-D^{n}(x) L_hU^{n}(x)\\
&=\delta_1 D^n(x) L_h U^{n+1}(x) + D^n(x)  L_h \delta_1U^n(x),
\end{split}
\]
that is,
\begin{equation}\label{eq:RLtransf}
^{RL}\partial_\tau^\alpha \delta_1 U^n(x) = \delta_1 D^n(x) L_h U^{n+1}(x) + D^n(x)  L_h \delta_1U^n(x),\quad \textup{for all} \quad n\geq1.
\end{equation}
Moreover, directly from \eqref{eq:NumSch} for $n=1$, we have 
\begin{equation}\label{eq:inicondRL}
\delta_1 U^0(x) = \Gamma(2-\alpha)\tau^\alpha D^{1}(x) L_h U^1(x).
\end{equation}
Note that the above discrete relation between the Riemann-Liouville and Caputo derivatives has its continuous counterpart that follows directly from the definition, that is
\[
\partial_t \partial_t^\alpha f(t)= {^{RL}\partial_t^\alpha} f'(t).
\]
where $^{RL}\partial_t^\alpha$ the Riemann-Liouville derivative. Now we can state the main results describing the regularity in time of the solution to the numerical scheme \eqref{eq:NumSch}.
\begin{thm}\label{thm:RegularityTime}
Assume \eqref{As:wepos}, \eqref{As:webdd} for some $r\in[0,2]$ and $h,\tau>0$. Let $U^0\in BUC(\R^d)$, $D\in C_b([0,T]: X^r_b(\R^d))\cap C_b(\R^d: Lip([0,T]))$ nonnegative and $U^n$ satisfy \eqref{eq:NumSch}. Then, we have
\begin{equation}\label{eqn:RegularityTime}
	\tau^{-1}\|U^n-U^{n-1}\|_{L^\infty(\mathbb{R}^d)} \leq C\|L_h U^0\|_{L^\infty(\R^d)}\left(1+t_n^{\alpha-1}\right),
\end{equation}
for all $n\geq1$, where $C=C(\alpha,r, T, \|D\|_{L^\infty(Q_T)},\|D\|_{C_b([0,T]: X^r_b(\R^d))}, \|D\|_{C_b(\R^d: Lip([0,T]))})>0$.
\end{thm}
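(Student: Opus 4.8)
The plan is to analyze the discrete evolution \eqref{eq:RLtransf} for $\delta_1 U^n$ together with the initial relation \eqref{eq:inicondRL}, and then apply the discrete fractional Gr\"onwall inequality for the Riemann-Liouville operator from Appendix \ref{app:Gronwall}. First I would take $\esssup$ and $\essinf$ over $x$ in \eqref{eq:RLtransf}, arguing exactly as in the proof of Lemma \ref{lem:decLh}: at a near-maximizing sequence the term $D^n(x)\sum_\beta(\delta_1 U^n(x)-\delta_1 U^n(x+z_\beta))\omega_\beta(h)$ has a favorable sign, so that the $L_h\delta_1 U^n$ term can be dropped, leaving
\[
{}^{RL}\partial_\tau^\alpha\big(\|\delta_1 U^n\|_{L^\infty(\R^d)}\big)\leq \|\delta_1 D^n\|_{L^\infty(\R^d)}\,\|L_h U^{n+1}\|_{L^\infty(\R^d)}.
\]
By the Lipschitz-in-time assumption on $D$ we have $\|\delta_1 D^n\|_{L^\infty}\leq \|D\|_{C_b(\R^d:Lip([0,T]))}\,\tau$, and by Lemma \ref{lem:decLh} we have $\|L_h U^{n+1}\|_{L^\infty}\leq C\|L_h U^0\|_{L^\infty}$ (for $\tau\leq\tau_0$), so the right-hand side is bounded by $C\tau\|L_h U^0\|_{L^\infty}$, a quantity independent of $n$.

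Next I would feed this into the discrete Riemann-Liouville Gr\"onwall inequality with a constant forcing term $F^n\equiv C\tau\|L_h U^0\|_{L^\infty}$ and the initial datum $\delta_1 U^0$, whose size is controlled by \eqref{eq:inicondRL}: $\|\delta_1 U^0\|_{L^\infty}\leq \Gamma(2-\alpha)\tau^\alpha\|D\|_{L^\infty(Q_T)}\|L_h U^1\|_{L^\infty}\leq C\tau^\alpha\|L_h U^0\|_{L^\infty}$, again using Lemma \ref{lem:decLh}. The Gr\"onwall estimate for ${}^{RL}\partial_\tau^\alpha$ produces a bound of the form $\|\delta_1 U^n\|_{L^\infty}\leq C\,E_\alpha(Ct_n^\alpha)\big(t_n^{\alpha-1}\|\delta_1 U^0\|_{L^\infty}+$ (contribution of the forcing)$\big)$; since the forcing is constant of size $C\tau\|L_h U^0\|_{L^\infty}$ it contributes a term $\lesssim \tau\|L_h U^0\|_{L^\infty}$ after the discrete fractional summation, while the initial-datum term contributes $t_n^{\alpha-1}\cdot\tau^\alpha\|L_h U^0\|_{L^\infty}$; dividing through by $\tau$ and bounding $E_\alpha$ on $[0,T]$ by a constant yields precisely \eqref{eqn:RegularityTime}. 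I should be a little careful that the scaling $\tau^{\alpha-1}\cdot\tau^\alpha$ combines with the Mittag-Leffler weight so that, after the $\tau^{-1}$ normalization, the leading singular term is $t_n^{\alpha-1}$ rather than $\tau^{\alpha-1}$; this is exactly the point where one uses $t_1=\tau$ for the base case and the discrete analogue of $\int_0^{t_n}(t_n-s)^{\alpha-1}\,ds\sim t_n^\alpha$ for the forcing.

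The main obstacle I anticipate is getting the precise form of the discrete Riemann-Liouville Gr\"onwall inequality right — in particular, tracking how the singular kernel weight $b_{n-1}\sim (n\tau)^{\alpha-1}\tau^{1-\alpha}/\tau^{-\alpha}=\ldots$ acts on the initial value $\delta_1 U^0$ and confirming that the exponent bookkeeping delivers $t_n^{\alpha-1}$ uniformly in $\tau$ rather than a spurious blow-up as $\tau\to 0$. The cleanest route is to invoke the unified Gr\"onwall result of Appendix \ref{app:Gronwall} in its Riemann-Liouville form (with the $\delta_1 U^0$ term playing the role of the ``Riemann-Liouville initial weight''), so that all the technical kernel estimates are outsourced. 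A secondary, purely notational, subtlety is that \eqref{eq:RLtransf} only holds for $n\geq 1$ while \eqref{eq:inicondRL} handles $n=0$ separately; one must check that the Gr\"onwall iteration is correctly initialized at $n=1$ with $\delta_1 U^0$ as data and that the $n=1$ estimate $\tau^{-1}\|\delta_1 U^0\|_{L^\infty}\leq C\tau^{\alpha-1}\|L_h U^0\|_{L^\infty}=Ct_1^{\alpha-1}\|L_h U^0\|_{L^\infty}$ is consistent with the claimed bound, which it is.
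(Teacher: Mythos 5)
Your proposal is correct and follows essentially the same route as the paper's proof: the $\esssup$/$\essinf$ argument applied to \eqref{eq:RLtransf} combined with the Lipschitz-in-time bound on $\delta_1 D$ and Lemma \ref{lem:decLh}, the initial-step estimate \eqref{eq:inicondRL}, and the discrete Riemann--Liouville Gr\"onwall inequality \eqref{eq:RLCor}, with $n=1$ treated separately. The scaling issue you flag resolves exactly as you hope, since \eqref{eq:RLCor} weights the initial datum by $\tau^{1-\alpha}t_n^{\alpha-1}$, so $\|\delta_1 U^0\|_{L^\infty(\R^d)}\leq C\tau^{\alpha}\|L_hU^0\|_{L^\infty(\R^d)}$ contributes $C\tau\, t_{n-1}^{\alpha-1}\|L_hU^0\|_{L^\infty(\R^d)}$, which after dividing by $\tau$ and using $t_{n-1}^{\alpha-1}\leq 2\,t_n^{\alpha-1}$ for $n\geq2$ gives precisely \eqref{eqn:RegularityTime}.
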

\begin{proof}
First, arguing as in the proof of Lemma \ref{lem:decLh} for the $\esssup$ and $\essinf$ of $\delta_1 U^n$, and using Lemma \ref{lem:decLh} in \eqref{eq:RLtransf} we get
\begin{equation}
	\label{eq:RLInequality}
	^{RL}\partial_\tau^\alpha \|\delta_1 U^{n-1}\|_{L^\infty(\mathbb{R}^d)} \leq C \|\delta_1 D^{n-1}\|_{L^\infty(Q_T)} \|L_h U^{0}\|_{L^\infty(\mathbb{R}^d)} \leq C \tau \|L_h U^{0}\|_{L^\infty(\mathbb{R}^d)},
\end{equation}
for $n\geq1$, and directly from \eqref{eq:inicondRL} and Lemma \ref{lem:decLh},
\begin{equation}\label{eq:caso0}
\|\delta_1 U^0\|_{L^\infty(\mathbb{R}^d)} \leq C \tau^\alpha\|L_h U^{0}\|_{L^\infty(\mathbb{R}^d)},
\end{equation}
where the constant $C=C(\alpha,r, T, \|D\|_{L^\infty(Q_T)},\|D\|_{X^r(Q_T)})>0$.  Note that \eqref{eqn:RegularityTime} for $n=1$ follows directly from \eqref{eq:caso0} since $t_1=\tau$.  On the other hand, for $n\geq2$,  an application of the Riemann-Liouville version of the Gr\"onwall inequality \eqref{eq:RLCor} yields
\[
\begin{split}
\|\delta_1 U^{n-1}\|_{L^\infty(\mathbb{R}^d)} &\leq C \|L_h U^{0}\|_{L^\infty(\mathbb{R}^d)} \left(\tau^\alpha\tau^{1-\alpha} t^{\alpha-1}_{n-1} + \frac{\Gamma(2-\alpha)}{\alpha}\tau t^\alpha_{n-1}\right)\\
& \leq C \|L_h U^{0}\|_{L^\infty(\mathbb{R}^d)} \tau \left(t^{\alpha-1}_{n-1} + \frac{\Gamma(2-\alpha)}{\alpha} T^\alpha\right).
\end{split}
\]
Finally note that, for $n\geq2$, we have $t_{n-1}^{\alpha-1}= t_{n}^{\alpha-1}\left(\frac{n}{n-1}\right)^{1-\alpha}\leq 2 t_{n}^{\alpha-1}$, 
from which the desired result follows.  
\end{proof}

From this, it is now possible to obtain a modulus of continuity in time. 
\begin{cor}\label{cor:EquicontinuityTime}
Assume \eqref{As:wepos}, \eqref{As:webdd} for some $r\in(0,2]$ and $h,\tau>0$. Let $U^0\in X^a_b(\R^d)$ for some $a\in(0,r]$, $D\in C_b([0,T]: X^r_b(\R^d)) \cap C_b(\R^d: Lip([0,T]))$ nonnegative and $U^n$ satisfy \eqref{eq:NumSch}. Then, for any $0\leq m<n$, we have
\begin{equation}\label{equi:est1}
	\|U^n - U^m\|_{L^\infty(\mathbb{R}^d)} \leq C \max\{(t_n-t_m)^{\alpha\frac{a}{r}}, (t_n-t_m)^{\frac{a}{r}}\},
\end{equation}	
and 
\begin{equation}\label{equi:est2}
	\|U^n - U^m\|_{L^\infty(\mathbb{R}^d)} \leq  C ((t_n^{\alpha-1}+1)(t_n-t_m) )^\frac{a}{r},
\end{equation}
where $C=C(\alpha,r, T, \|D\|_{L^\infty(Q_T)},\|D\|_{C_b([0,T]: X^r_b(\R^d))}, \|D\|_{C_b(\R^d: Lip([0,T]))}, \|U^0\|_{L^\infty(\R^d)},\|U^0\|_{X^a(\R^d)})>0$.
\end{cor}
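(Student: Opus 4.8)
The plan is to derive the two estimates \eqref{equi:est1} and \eqref{equi:est2} from Theorem \ref{thm:RegularityTime} (which controls the one-step increments $\|\delta_1 U^n\|_{L^\infty}$) together with the space-regularity Corollary \ref{cor:regspace} (which provides a uniform bound on $\|L_h U^0\|_{L^\infty}$ when $U^0\in X^a_b$) by a mollification argument identical in spirit to the one used in the proof of Corollary \ref{cor:regspace}. The starting point is that, for a smooth initial datum $U^0_\delta = U^0*\rho_\delta$, Theorem \ref{thm:RegularityTime} gives $\tau^{-1}\|U^m_\delta - U^{m-1}_\delta\|_{L^\infty} \le C\|L_h U^0_\delta\|_{L^\infty}(1+t_m^{\alpha-1})$, so by telescoping and summing,
\[
\|U^n_\delta - U^m_\delta\|_{L^\infty} \le \sum_{k=m+1}^{n} \|\delta_1 U^{k-1}_\delta\|_{L^\infty} \le C\|L_h U^0_\delta\|_{L^\infty}\sum_{k=m+1}^{n}\tau(1+t_{k-1}^{\alpha-1}).
\]
The sum $\sum_{k=m+1}^n \tau(1+t_{k-1}^{\alpha-1})$ is a Riemann sum for $\int_{t_m}^{t_n}(1+s^{\alpha-1})\,\dd s$, which is bounded by $C((t_n-t_m) + t_n^{\alpha-1}(t_n-t_m))$ for the second estimate, or simply by $C\max\{(t_n-t_m), (t_n-t_m)^\alpha\}$ for the first (using $\int_{t_m}^{t_n} s^{\alpha-1}\dd s \le \tfrac{1}{\alpha}(t_n-t_m)^\alpha$ since $\alpha<1$); one must be slightly careful near $m=0$ where $t_0^{\alpha-1}$ diverges, but $\tau\cdot\tau^{\alpha-1}=\tau^\alpha$ handles the first term separately exactly as in \eqref{eq:caso0}.

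Next I would combine this with the $L^\infty$-contraction of Lemma \ref{lem:contraction}: writing $\|U^n-U^m\|_{L^\infty} \le \|U^n - U^n_\delta\|_{L^\infty} + \|U^n_\delta - U^m_\delta\|_{L^\infty} + \|U^m_\delta - U^m\|_{L^\infty}$, the outer two terms are each $\le \|U^0 - U^0_\delta\|_{L^\infty} \le L_{U^0}\delta^a$ by Lemma \ref{lem:contraction} and the regularity of $U^0$, while the middle term is bounded by the telescoping estimate above with $\|L_h U^0_\delta\|_{L^\infty} \le C\|U^0_\delta\|_{X^r} \le C L_{U^0}\delta^{a-r}$ (for $a<r$; when $a=r$ it is simply bounded). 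So for instance
\[
\|U^n - U^m\|_{L^\infty} \le C\Bigl(\delta^a + \delta^{a-r}\max\{(t_n-t_m),(t_n-t_m)^\alpha\}\Bigr),
\]
and the optimal choice is $\delta = \max\{(t_n-t_m)^{1/r}, (t_n-t_m)^{\alpha/r}\}$ — i.e. $\delta$ equal to the $(1/r)$-th power of the relevant time increment factor — which produces exactly $C\max\{(t_n-t_m)^{a/r},(t_n-t_m)^{\alpha a/r}\}$, giving \eqref{equi:est1}. For \eqref{equi:est2} the same scheme applies with the time factor replaced by $(t_n^{\alpha-1}+1)(t_n-t_m)$ and $\delta = ((t_n^{\alpha-1}+1)(t_n-t_m))^{1/r}$.

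The main obstacle I anticipate is the bookkeeping around $t=0$ and, more importantly, ensuring that the Riemann-sum comparison $\sum \tau(1+t_{k-1}^{\alpha-1}) \lesssim \int (1+s^{\alpha-1})\dd s$ is done cleanly despite the integrable singularity of $s^{\alpha-1}$ at the origin: when $m=0$ the first term in the telescoped sum is the $k=1$ term, which by \eqref{eq:caso0} is $C\tau^\alpha\|L_hU^0_\delta\|_{L^\infty}$ rather than $C\tau\cdot t_0^{\alpha-1}\|L_hU^0_\delta\|_{L^\infty}$, and one should isolate it before comparing the remaining sum (for $k\ge 2$, where $t_{k-1}\ge\tau>0$) to the integral; the bound $t_{k-1}^{\alpha-1}\le 2t_k^{\alpha-1}$ from the proof of Theorem \ref{thm:RegularityTime} and monotonicity of $s\mapsto s^{\alpha-1}$ make this routine. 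A secondary point is that the constant $C$ must be tracked to confirm it depends only on the quantities listed in the statement — this follows because the mollifier constants $K_1, L_{U^0}$ depend only on $\|U^0\|_{X^a}$, $\|U^0\|_{L^\infty}$, the dimension, and the fixed choice of $\rho$, and the constant from Theorem \ref{thm:RegularityTime} depends on the $D$-norms and $\alpha, r, T$ as required. Everything else is the same two-scale interpolation already executed in Corollary \ref{cor:regspace}.
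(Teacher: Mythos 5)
Your proposal is correct and follows essentially the same route as the paper's proof: telescoping the one-step estimate of Theorem \ref{thm:RegularityTime}, comparing the resulting sum with the integral of $s^{\alpha-1}$, then the mollification/$L^\infty$-contraction splitting with the optimal choice $\delta=\eta^{1/r}$ and the elementary conversions between $t_n^\alpha-t_m^\alpha$, $t_n^{\alpha-1}(t_n-t_m)$ and $(t_n-t_m)^\alpha$ (Lemma \ref{lem:tech}). The only differences are cosmetic — the paper keeps the later time index $t_{m+j}^{\alpha-1}$ from Theorem \ref{thm:RegularityTime} so no special treatment of the $m=0$ term is needed, and it carries the extra $(\delta^{a-r}+1)$ factor explicitly, handling $\eta>1$ via the $T$-dependence of the constant, exactly as you implicitly do.
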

\begin{proof}
Without loss of generality, we can consider the difference $\|U^{m+k} - U^m\|_{L^\infty(\mathbb{R}^d)}$ for any $m,k\in\mathbb{N}$. We can then iterate one-step differences and use Theorem \ref{thm:RegularityTime} to obtain the bound
\begin{equation}
	\|U^{m+k} - U^m\|_{L^\infty(\mathbb{R}^d)} \leq \sum_{j=1}^k \|U^{m+j} - U^{m+j-1}\|_{L^\infty(\mathbb{R}^d)} \leq C{\|L_h U^{0}\|_{L^\infty(\mathbb{R}^d)}} \tau \sum_{j=1}^k (t_{m+j}^{\alpha-1} + 1).
\end{equation}
Now, one sum above is trivial to evaluate yielding $\tau \sum_{j=1}^k 1 = t_k$, while the other can be bounded by an integral
\begin{equation}
	\tau \sum_{j=1}^k t_{m+j}^{\alpha-1} \leq \int_{t_m}^{t_{m+k}} t^{\alpha-1} dt = \frac{1}{\alpha} \left(t_{m+k}^\alpha - t_{m}^\alpha\right),
\end{equation}
since the function $t\mapsto t^{\alpha-1}$ is decreasing. Therefore,
\begin{equation}
	\|U^{m+k} - U^m\|_{L^\infty(\mathbb{R}^d)} \leq C \|L_h U^{0}\|_{L^\infty(\mathbb{R}^d)} \max\{t_{m+k}^\alpha-t_{k}^\alpha,t_{m+k}-t_{m}\}.
\end{equation}
In what follows we will use $K$ to denote a positive constant, and it might change from inequality to inequality to keep the notation as simple a possible. Proceeding by mollification, as in the proof of Corollary \ref{cor:regspace}, and using the above estimate for $U_\delta$, we get
\[
\begin{split}
	\|U^{m+k} - U^m\|_{L^\infty(\mathbb{R}^d)}\leq& \|U^{m+k} - (U_\delta)^{m+k}\|_{L^\infty(\mathbb{R}^d)}+\|U^{m} - (U_\delta)^{m}\|_{L^\infty(\mathbb{R}^d)}+\|(U_\delta)^{m+k} - (U_\delta)^{m}\|_{L^\infty(\mathbb{R}^d)}\\
	\leq&  2\|U^{0} - U_\delta^{0}\|_{L^\infty(\mathbb{R}^d)}+C { \|L_h U_\delta^{0}\|_{L^\infty(\mathbb{R}^d)}}\max\{t_{m+k}^\alpha-t_{m}^\alpha,t_{m+k}-t_{m}\}\\
	\leq & 2L_{{U^0}} \delta^a + \tilde{C}(L_{{U^0}}\delta^{a-r}+ \|U_\delta^0\|_{L^\infty(\R^d)}) \max\{t_{m+k}^\alpha-t_{m}^\alpha,t_{m+k}-t_{m}\}\\
	\leq & K (\delta^a + (\delta^{a-r}+1)\max\{t_{m+k}^\alpha-t_{m}^\alpha,t_{m+k}-t_{m}\}).
\end{split}
\]
Choosing $\delta=\max\{t_{m+k}^\alpha-t_{m}^\alpha,t_{m+k}-t_{m}\}^{\frac{1}{r}}$, we obtain
\[
\|U^{m+k} - U^m\|_{L^\infty(\mathbb{R}^d)}\leq K (\max\{t_{m+k}^\alpha-t_{m}^\alpha,t_{m+k}-t_{m}\}^{\frac{a}{r}}+  \max\{t_{m+k}^\alpha-t_{m}^\alpha,t_{m+k}-t_{m}\} ).
\]

Let us denote $\eta=\max\{t_{m+k}^\alpha-t_{m}^\alpha,t_{m+k}-t_{m}\}$. If $\eta \leq 1$, then $\eta\leq \eta^{\frac{a}{r}}$. On the other hand, if $\eta>1$, then $\eta= \eta^{\frac{a}{r}}  \eta^{1-\frac{a}{r}} \leq K  \eta^{\frac{a}{r}}$ since $t_k\leq T$. Thus, the above estimate yields 
\[
\|U^{m+k} - U^m\|_{L^\infty(\mathbb{R}^d)}\leq K \max\{t_{m+k}^\alpha-t_{m}^\alpha,t_{m+k}-t_{m}\}^{\frac{a}{r}}.
\]

Using now the numerical identity in Lemma \ref{lem:tech}, we get
\[
\|U^{m+k} - U^m\|_{L^\infty(\mathbb{R}^d)}\leq K \max\{t_{m+k}^{\alpha-1} (t_{m+k}-t_{m}),t_{m+k}-t_{m}\}^{\frac{a}{r}}.
\]
From here, \eqref{equi:est2} follows directly. To prove \eqref{equi:est1} we just note that $t_{m+k}^{\alpha-1} \leq (t_{m+k}-t_m)^{\alpha-1} $, since $\alpha<1$. 
\end{proof}

\section{Compactness and convergence to viscosity solutions}\label{sec:CompConv}

In this section, we will show that numerical solutions have a uniform limit up to a subsequence. Moreover, we will show that the limit is actually a viscosity solution of \eqref{eq:main}, and satisfies a number of inherited properties. Moreover, whenever the uniqueness of viscosity solutions is known, we ensure full convergence of the scheme.

\subsection{Estimates for a continuous in time scheme}
Our aim is to apply Arzel\`a-Ascoli to ensure convergence of the scheme. To do so, we need to extend the concept of numerical solution to $\R^d\times[0,T]$. We do this by piecewise linear interpolation. More precisely, let $U^n$ satisfy \eqref{eq:NumSch} and,  for each $t\in[t_{k},t_{k+1}]$, define
\begin{equation}\label{eq:interptime}
U(x,t)= \frac{t_{k+1}-t}{\tau} U^{k}(x) + \frac{t-t_k}{\tau} U^{k+1}(x).
\end{equation}
We have the following equiboundedness and equicontinuity estimates in space. 

\begin{lem}\label{lem:equisspace}
Let the assumptions of Corollary \ref{cor:regspace} hold. Let also $U$ be defined by \eqref{eq:interptime}. Then,
\begin{enumerate}[\rm (a)]
	\item $U$ is  bounded uniformly in $\tau$ and $h$. More precisely,
	\[
	\|U(\cdot,t)\|_{L^\infty(\R^d)} \leq \|U^0\|_{L^\infty(\R^d)}, \quad \textup{for all} \quad t\in[0,T].
	\]
	\item $U$ is continuous in space uniformly  in $\tau$ and $h$. More precisely,
	\[
	\|U(\cdot+y,t)-U(\cdot,t)\|_{L^\infty(\R^d)} \leq C\max\{1, t^\alpha\} \left\{ \begin{split}
		 \max\{|y|^a, |y|^r\}, \quad  & \textup{if} \quad r\in (0,1],\\
		 \max\{|y|^\frac{a}{r},|y|\}, \quad  & \textup{if} \quad r\in (1,2], \quad
	\end{split}\right. \quad \textup{for all} \quad t\in[0,T],
	\]
	where $C=C(\alpha,r, T, \|D\|_{L^\infty(Q_T)},\|D\|_{X^r(Q_T)}, \|U^0\|_{L^\infty(\R^d)},\|U^0\|_{X^a(\R^d)})>0$.
\end{enumerate}
\end{lem}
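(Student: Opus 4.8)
The plan is to use that, on each subinterval $t\in[t_k,t_{k+1}]$, the interpolant $U(\cdot,t)$ from \eqref{eq:interptime} is a \emph{convex combination} of the two nodal values $U^k$ and $U^{k+1}$: the coefficients $\frac{t_{k+1}-t}{\tau}$ and $\frac{t-t_k}{\tau}$ are nonnegative and sum to $1$. Hence any $L^\infty(\R^d)$ bound that holds for all $U^n$ with a constant independent of $n$ passes to $U(\cdot,t)$ with the same constant, and likewise for the spatial increments, since $\delta_yU(\cdot,t):=U(\cdot+y,t)-U(\cdot,t)=\frac{t_{k+1}-t}{\tau}\delta_yU^k+\frac{t-t_k}{\tau}\delta_yU^{k+1}$. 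This reduces both parts of the lemma to the already-established nodal estimates.

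For part (a), I would first invoke the $L^\infty$-stability of the scheme: applying Lemma \ref{lem:contraction} with $V^n\equiv0$ (the solution of \eqref{eq:NumSch} with zero initial datum) gives $\|U^n\|_{L^\infty(\R^d)}\le\|U^0\|_{L^\infty(\R^d)}$ for every $n\ge0$. Then for $t\in[t_k,t_{k+1}]$ the convex-combination bound immediately yields $\|U(\cdot,t)\|_{L^\infty(\R^d)}\le\frac{t_{k+1}-t}{\tau}\|U^k\|_{L^\infty(\R^d)}+\frac{t-t_k}{\tau}\|U^{k+1}\|_{L^\infty(\R^d)}\le\|U^0\|_{L^\infty(\R^d)}$.

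For part (b), I would apply Corollary \ref{cor:regspace} to each of $\delta_yU^k$ and $\delta_yU^{k+1}$ (the condition $\tau\le\tau_0$ of that corollary being in force), which bounds them by $C\max\{1,t_k^\alpha\}$, resp.\ $C\max\{1,t_{k+1}^\alpha\}$, times the $y$-dependent bracket appearing in the statement. Taking the larger of the two and using the convex-combination identity gives $\|\delta_yU(\cdot,t)\|_{L^\infty(\R^d)}\le C\max\{1,t_{k+1}^\alpha\}$ times that bracket. The last step is to trade $\max\{1,t_{k+1}^\alpha\}$ for $\max\{1,t^\alpha\}$: since $t\ge t_k=t_{k+1}-\tau$ and $x\mapsto x^\alpha$ is subadditive for $\alpha\in(0,1)$, we have $t_{k+1}^\alpha\le t^\alpha+\tau^\alpha\le t^\alpha+\tau_0^\alpha$, so $\max\{1,t_{k+1}^\alpha\}\le(1+\tau_0^\alpha)\max\{1,t^\alpha\}$, and the extra factor is absorbed into $C$.

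There is no genuine obstacle here; the argument is essentially a transfer of the nodal estimates of Corollary \ref{cor:regspace} and Lemma \ref{lem:contraction} through a convex combination. The only point requiring a small amount of care is the comparison of $t_{k+1}^\alpha$ with $t^\alpha$ near $t=0$ (where $t$ and $t_{k+1}$ need not be comparable), which is resolved by the subadditivity estimate above; equivalently one can split into the cases $t_{k+1}\ge 2\tau$ (so $t\ge t_{k+1}/2$ and $t_{k+1}^\alpha\le 2^\alpha t^\alpha$) and $t_{k+1}<2\tau$ (so $\max\{1,t_{k+1}^\alpha\}\le\max\{1,(2\tau_0)^\alpha\}$ is an admissible constant).
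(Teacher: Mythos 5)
Your proposal is correct and follows essentially the same route as the paper: both parts transfer the nodal estimates (Lemma \ref{lem:contraction} with $V^n\equiv 0$ for boundedness, Corollary \ref{cor:regspace} for the spatial modulus) through the convex combination defining the interpolant \eqref{eq:interptime}. The only divergence is the last step of (b): the paper splits into the cases $t_{k+1}\le 1$, $t_k\ge 1$, $t_k<1<t_{k+1}$ and uses Lemma \ref{lem:tech} to reach the factor $(1+\tfrac{1}{\alpha})\max\{1,t^\alpha\}$, whereas you trade $\max\{1,t_{k+1}^\alpha\}$ for $\max\{1,t^\alpha\}$ via subadditivity of $x\mapsto x^\alpha$ and absorb $\tau_0^\alpha$ into $C$, which is admissible because $\tau_0$ depends only on $\alpha$, $r$ and the norms of $D$.
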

\begin{proof}
For the boundedness result, we use the $L^\infty$-contraction result given in Lemma \ref{lem:contraction} with  $V^n=0$ (which is clearly a solution) to get
\[
\|U^k\|_{L^\infty(\R^d)}\leq \|U^0\|_{L^\infty(\R^d)},
\]
for all $k\geq0$. Then,
\[
\begin{split}
	\|U(\cdot,t)\|_{L^\infty(\R^d)} &\leq \frac{t_{k+1}-t}{\tau} \|U^{k}\|_{L^\infty(\R^d)} + \frac{t-t_k}{\tau} \|U^{k+1}\|_{L^\infty(\R^d)}\\
	&\leq \frac{t_{k+1}-t}{\tau} \|U^{0}\|_{L^\infty(\R^d)} + \frac{t-t_k}{\tau} \|U^{0}\|_{L^\infty(\R^d)}= \|U^{0}\|_{L^\infty(\R^d)}.
\end{split}
\]

To prove  uniform continuity in space, we adopt the notation
\[
f(y)=\left\{ \begin{split}
	\max\{|y|^a, |y|^r\},\quad  & \textup{if} \quad r\in (0,1],\\
	\max\{|y|^\frac{a}{r},|y|\},\quad  & \textup{if} \quad r\in (1,2]. \quad
\end{split}\right.
\]
Let $t\in [t_k,t_{k+1}]$. Direct computations using Corollary \ref{cor:regspace} show
\[
\begin{split}
	|U(x+y,t)-U(x,t)|\leq& \frac{t_{k+1}-t}{\tau} |U^{k}(x+y)-U^{k}(x)| + \frac{t-t_k}{\tau} |U^{k+1}(x+y)-U^{k+1}(x)|\\
	\leq&C f(y) \left( \frac{t_{k+1}-t}{\tau} \max\{1,t_k^\alpha\}  +  \frac{t-t_k}{\tau}  \max\{1,t_{k+1}^\alpha\}\right).
\end{split}
\]
If $t_{k+1}\leq 1$, the result follows directly. If $t_{k}\geq1$, the above estimate reads
\[
\begin{split}
	|U(x+y,t)-U(x,t)| &\leq C f(y) \left( \frac{t_{k+1}-t}{\tau}t_k^\alpha +  \frac{t-t_k}{\tau} t_{k+1}^\alpha\right)\\
	&=C f(y) \left(\left(1- \frac{t-t_k}{\tau}\right) t_k^{\alpha} + \frac{t-t_k}{\tau} t_{k+1}^\alpha\right)\\
	&=C f(y) \left(  t_k^{\alpha} +  \frac{t-t_k}{\tau}( t_{k+1}^\alpha- t_{k}^\alpha)\right).
\end{split}
\]
We can apply twice Lemma \ref{lem:tech} to get
\[
\begin{split}
	|U(x+y,t)-U(x,t)| & \leq C f(y) \left(  t_k^{\alpha} +  \frac{t-t_k}{\tau} t_{k+1}^{\alpha-1} (t_{k+1}-t_k) \right)\\
	&=C f(y) \left(  t_k^{\alpha} +  (t-t_k) t_{k+1}^{\alpha-1}  \right)\\
	&\leq C f(y) \left(  t^{\alpha} +  (t-t_k) t^{\alpha-1}  \right)\\
	&\leq C f(y) \left(  t^{\alpha} +  \frac{t^\alpha-t_k^\alpha}{\alpha}  \right)\\
	&\leq C f(y)  \left(1+ \frac{1}{\alpha}\right) t^{\alpha}.
\end{split}
\]
Finally, the case $t_k< 1<t_{k+1}$ follows in a similar way.
\end{proof}

We prove now equicontinuity in time. 

\begin{lem}\label{lem:estcontintime}
Let the assumptions of Corollary \ref{cor:EquicontinuityTime} hold. Let also $U$ be defined by \eqref{eq:interptime}. Then, for all $\tilde{t}>t\geq0$, we have
\begin{equation}\label{equi:est1cont}
	\|U(\cdot,\tilde{t}) - U(\cdot,t)\|_{L^\infty(\mathbb{R}^d)} \leq C \max\{(\tilde{t}-t)^{\alpha\frac{a}{r}}, (\tilde{t}-t)^{\frac{a}{r}}\},
\end{equation}	
and 
\begin{equation}\label{equi:est2cont}
	\|U(\cdot,\tilde{t}) - U(\cdot,t)\|_{L^\infty(\mathbb{R}^d)} \leq  C ((\tilde{t}^{\alpha-1}+1)(\tilde{t}-t) )^\frac{a}{r},
\end{equation}
where $C=C(\alpha,r, T, \|D\|_{L^\infty(Q_T)},\|D\|_{C_b([0,T]: X^r_b(\R^d))}, \|D\|_{C_b(\R^d: Lip([0,T]))}, \|U^0\|_{L^\infty(\R^d)},\|U^0\|_{X^a(\R^d)})>0$.
\end{lem}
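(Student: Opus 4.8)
The plan is to transfer the discrete-in-time estimates of Corollary~\ref{cor:EquicontinuityTime} to the piecewise linear interpolant $U$ from \eqref{eq:interptime}, exactly in the spirit of how Lemma~\ref{lem:equisspace}(b) was deduced for the spatial variable. The basic ingredient is the elementary identity that, for $s\in[t_k,t_{k+1}]$, one has $U(\cdot,s)-U^k=\tfrac{s-t_k}{\tau}(U^{k+1}-U^k)$ and $U^{k+1}-U(\cdot,s)=\tfrac{t_{k+1}-s}{\tau}(U^{k+1}-U^k)$, so $U(\cdot,s)$ differs from each endpoint node of its cell by a fraction of the full nodal jump $\|U^{k+1}-U^k\|_{L^\infty(\R^d)}$, which is controlled by \eqref{equi:est2} applied to the consecutive nodes $t_k<t_{k+1}$, namely $\|U^{k+1}-U^k\|_{L^\infty(\R^d)}\le C((t_{k+1}^{\alpha-1}+1)\tau)^{a/r}$. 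Since $a\le r$, the weight satisfies $(\tfrac{s-t_k}{\tau})^{1-a/r}\le 1$, and one obtains $\|U(\cdot,s)-U^k\|_{L^\infty(\R^d)}\le C((t_{k+1}^{\alpha-1}+1)(s-t_k))^{a/r}$ and likewise with $t_{k+1}-s$ in place of $s-t_k$.

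With this at hand I would split according to the location of $t$ and $\tilde t$. If $t,\tilde t$ lie in the same cell $[t_m,t_{m+1}]$, then $U(\cdot,\tilde t)-U(\cdot,t)=\tfrac{\tilde t-t}{\tau}(U^{m+1}-U^m)$ and the estimate just described gives $\|U(\cdot,\tilde t)-U(\cdot,t)\|_{L^\infty(\R^d)}\le C((t_{m+1}^{\alpha-1}+1)(\tilde t-t))^{a/r}\le C((\tilde t^{\alpha-1}+1)(\tilde t-t))^{a/r}$, using $\tilde t\le t_{m+1}$ and $\alpha<1$. Otherwise, writing $t\in[t_m,t_{m+1}]$, $\tilde t\in[t_n,t_{n+1}]$ with $m<n$, I would telescope through the two nodes $U^{m+1}$ and $U^n$ (the middle term vanishing if $n=m+1$): the genuine nodal difference $\|U^n-U^{m+1}\|_{L^\infty(\R^d)}$ is handled directly by \eqref{equi:est2}, using $t_n-t_{m+1}\le\tilde t-t$ together with $t_n\ge\tfrac12\tilde t$ (because $\tilde t\le t_{n+1}=t_n+\tau\le 2t_n$ when $n\ge1$) to replace $t_n^{\alpha-1}$ by a constant multiple of $\tilde t^{\alpha-1}$; and the right-hand end term $\|U(\cdot,\tilde t)-U^n\|_{L^\infty(\R^d)}$ is handled by the cell estimate above with $\tilde t-t_n\le\tilde t-t$ and $t_{n+1}\ge\tilde t$.

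The one point requiring care---and the main obstacle---is the left-hand end term $\|U^{m+1}-U(\cdot,t)\|_{L^\infty(\R^d)}$ when $t$ is close to $0$: the nodal jump $\|U^{m+1}-U^m\|_{L^\infty(\R^d)}$ carries a factor $t_{m+1}^{\alpha-1}$ that blows up as $\tau\to0$ and cannot simply be bounded by $\tilde t^{\alpha-1}$. The fix is not to discard the interpolation weight: from the cell estimate, $\|U^{m+1}-U(\cdot,t)\|_{L^\infty(\R^d)}\le C((t_{m+1}^{\alpha-1}+1)(t_{m+1}-t))^{a/r}$, and then $t_{m+1}^{\alpha-1}(t_{m+1}-t)\le\tfrac1\alpha(t_{m+1}^\alpha-t^\alpha)\le\tfrac1\alpha(\tilde t^\alpha-t^\alpha)$ (since $t_{m+1}\le\tilde t$), which by the numerical identity of Lemma~\ref{lem:tech} is $\le C(\tilde t^{\alpha-1}+1)(\tilde t-t)$; combined with $t_{m+1}-t\le\tilde t-t$ this gives $\|U^{m+1}-U(\cdot,t)\|_{L^\infty(\R^d)}\le C((\tilde t^{\alpha-1}+1)(\tilde t-t))^{a/r}$. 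Summing the three contributions proves \eqref{equi:est2cont}. Finally, \eqref{equi:est1cont} follows from \eqref{equi:est2cont}: since $\tilde t\ge\tilde t-t$ and $\alpha-1<0$ we have $\tilde t^{\alpha-1}\le(\tilde t-t)^{\alpha-1}$, hence $(\tilde t^{\alpha-1}+1)(\tilde t-t)\le(\tilde t-t)^\alpha+(\tilde t-t)\le 2\max\{(\tilde t-t)^\alpha,\tilde t-t\}$, and raising to the power $a/r$ yields the claim.
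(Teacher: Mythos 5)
Your proposal is correct and follows essentially the same route as the paper: the same-cell identity $U(\cdot,\tilde t)-U(\cdot,t)=\tfrac{\tilde t-t}{\tau}(U^{m+1}-U^m)$ with the weight absorbed via $a\le r$, the three-term split through the nodes $U^{m+1}$ and $U^n$ controlled by \eqref{equi:est2}, the Lemma~\ref{lem:tech} argument to convert $t_{m+1}^{\alpha-1}(t_{m+1}-t)$ into $(\tilde t^{\alpha-1}+1)(\tilde t-t)$, and deducing \eqref{equi:est1cont} from \eqref{equi:est2cont} via $\tilde t^{\alpha-1}\le(\tilde t-t)^{\alpha-1}$. The only cosmetic difference is your use of $t_n\ge\tilde t/2$ for the middle term where the paper again invokes Lemma~\ref{lem:tech}; both work.
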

\begin{proof} 
Let $k,j\in \mathbb{N}$ be such that $t\in[t_j,t_{j+1}]$ and $\tilde{t}\in [t_k,t_{k+1}]$. Assume first that $j=k$. Then, by definition and Corollary \ref{cor:EquicontinuityTime}, 
\begin{equation}
	\begin{split}
		|U(x,\tilde{t})- U(x,t)|&=\left|\left(\frac{t_{k+1}-\tilde{t}}{\tau}-\frac{t_{k+1}-t}{\tau}\right)U^{k}(x) +\left(\frac{\tilde{t}-t_k}{\tau}- \frac{t-t_k}{\tau} \right)U^{k+1}(x)\right| \\
		&= \frac{\tilde{t}-t}{\tau}|U^{k+1}(x)-U^{k}(x)|\\
		&\leq C  \frac{\tilde{t}-t}{\tau}  (\tau (t_{k+1}^{\alpha-1}+1))^{\frac{a}{r}}\\
		&\leq C  \left(\frac{\tilde{t}-t}{\tau}\right)^{\frac{a}{r}}  (\tau (t_{k+1}^{\alpha-1}+1))^{\frac{a}{r}}\\
		&\leq C\left( (t_{k+1}^{\alpha-1}+1)(\tilde{t}-t) \right)^{\frac{a}{r}}\\
		&\leq C \left( (\tilde{t}^{\alpha-1}+1)(\tilde{t}-t)\right)^{\frac{a}{r}}.
	\end{split}
\end{equation}
Now, if $k\geq j+1$, by the previous estimate, Corollary \ref{cor:EquicontinuityTime} and Lemma \ref{lem:tech},
\begin{equation}
	\begin{split}
		|U(x,t)-U(x,\tilde{t})|&\leq|U(x,t)-U^{j+1}(x)|+ |U(x,\tilde{t})-U^j(x)|+|U^{k}(x)-U^{j+1}(x)|\\
		&\leq  C\left( \left((t_{j+1}^{\alpha-1}+1)(t_{j+1}-t)\right)^{\frac{a}{r}}  +  \left((\tilde{t}^{\alpha-1}+1)(\tilde{t}-t_k)\right)^{\frac{a}{r}} +  \left((t_k^{\alpha-1}+1)(t_k-t_{j+1})\right)^{\frac{a}{r}}\right) \\
		&\leq  3C \left((\tilde{t}^{\alpha-1}+1)(\tilde{t}-t)\right)^{\frac{a}{r}},
	\end{split}
\end{equation}
which concludes the proof of the second estimate. The proof for the first one follows directly using $\tilde{t}^{\alpha-1} \leq (\tilde{t}-t)^{\alpha-1} $ since $\alpha<1$

\end{proof}

\subsection{Limit of the scheme}
We are now ready to prove that the solution of the numerical scheme has a limit, up to a subsequence.

\begin{cor}\label{coro:compactnessscheme}
Let the assumptions of Lemma \ref{lem:estcontintime} hold. There exists a function $u\in C_b(Q_T)$ and a subsequence $(h_j,\tau_j)\to0$ as $j\to \infty$ such that
\[
U\to u \quad \textup{locally uniformly in} \quad \mathbb{R}^d\times[0,T] \quad \textup{as} \quad j\to \infty. 
\]
Moreover, $u$ has the following properties:
\begin{enumerate}[\rm (a)]
\item \emph{(Boundedness)} $
\|u(\cdot,t)\|_{L^\infty(\R^d)} \leq \|U^0\|_{L^\infty(\R^d)}$  \textup{for all} $ t\in[0,T]$. 
\item \emph{(Space continuity)} $u$ is uniformly continuous in space and \[
\|u(\cdot+y,t)-u(\cdot,t)\|_{L^\infty(\R^d)} \leq C \max\{1, t^\alpha\} \left\{ \begin{split}
	 \max\{|y|^a, |y|^r\} \quad  & \textup{if} \quad r\in (0,1],\\
	 \max\{|y|^\frac{a}{r},|y|\} \quad  & \textup{if} \quad r\in (1,2], \quad
\end{split}\right. \quad \textup{for all} \quad t\in[0,T],
\]
where $C=C(\alpha,r, T, \|D\|_{L^\infty(Q_T)},\|D\|_{X^r(Q_T)}, \|U^0\|_{L^\infty(\R^d)},\|U^0\|_{X^a(\R^d)})>0$.
\item \emph{(Time continuity)} $u$ is uniformly continuous in time and, for all $\tilde{t}>t\geq0$, it satisfies
\begin{equation}\label{equi:est1contlimit}
	\|u(\cdot,\tilde{t}) - u(\cdot,t)\|_{L^\infty(\mathbb{R}^d)} \leq C \max\{(\tilde{t}-t)^{\alpha\frac{a}{r}}, (\tilde{t}-t)^{\frac{a}{r}}\},
\end{equation}	
and 
\begin{equation}\label{equi:est2contlimit}
	\|u(\cdot,\tilde{t}) - u(\cdot,t)\|_{L^\infty(\mathbb{R}^d)} \leq  C ((\tilde{t}^{\alpha-1}+1)(\tilde{t}-t) )^\frac{a}{r},
\end{equation}with $C=C(\alpha,r, T, \|D\|_{L^\infty(Q_T)}, \|D\|_{C_b([0,T]: X^r_b(\R^d))}, \|D\|_{C_b(\R^d: Lip([0,T]))}, \|U^0\|_{L^\infty(\R^d)},\|U^0\|_{X^a(\R^d)})>0$.
\end{enumerate}
\end{cor}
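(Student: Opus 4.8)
The plan is to apply the Arzel\`a--Ascoli theorem on an exhausting family of balls, extract a diagonal subsequence converging locally uniformly, and then pass the uniform-in-$(h,\tau)$ estimates to the limit.

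\textbf{Step 1 (equiboundedness and equicontinuity).} First I would fix $R>0$ and check that the family $\{U\}_{h,\tau}$ is bounded and equicontinuous on $\overline{B_R}\times[0,T]$. Boundedness is Lemma \ref{lem:equisspace}(a): $\|U(\cdot,t)\|_{L^\infty(\R^d)}\le\|U^0\|_{L^\infty(\R^d)}$, uniformly in $h,\tau$. For equicontinuity, given $(x,t),(x',t')\in \overline{B_R}\times[0,T]$ with $t'\le t$ I would split
\[
|U(x,t)-U(x',t')|\le |U(x,t)-U(x',t)|+|U(x',t)-U(x',t')|,
\]
bound the first term via Lemma \ref{lem:equisspace}(b) (its right-hand side is at most $C\max\{1,T^\alpha\}$ times a quantity vanishing as $|x-x'|\to0$) and the second via Lemma \ref{lem:estcontintime}, i.e.\ by $C\max\{(t-t')^{\alpha a/r},(t-t')^{a/r}\}$. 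Both bounds are independent of $h,\tau$ and tend to $0$ with $|x-x'|+|t-t'|$, which yields a modulus of continuity on $\overline{B_R}\times[0,T]$ uniform in the discretization parameters.

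\textbf{Step 2 (compactness and diagonal extraction).} Given any $(h_j,\tau_j)\to0$, with $U_j$ the corresponding interpolant \eqref{eq:interptime}, Step 1 and Arzel\`a--Ascoli produce a subsequence converging uniformly on $\overline{B_1}\times[0,T]$; refining successively on $\overline{B_2}\times[0,T],\overline{B_3}\times[0,T],\dots$ and taking the diagonal subsequence (still denoted $U_j$) gives a function $u$ with $U_j\to u$ uniformly on each $\overline{B_R}\times[0,T]$, hence locally uniformly on $\R^d\times[0,T]$. As a locally uniform limit of continuous functions $u$ is continuous, and $\|u(\cdot,t)\|_{L^\infty(\R^d)}\le\|U^0\|_{L^\infty(\R^d)}$, so $u\in C_b(Q_T)$ and (a) holds.

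\textbf{Step 3 (inheritance of the moduli).} Estimates (b) and (c) hold for each $U_j$ with constant independent of $j$ (Lemma \ref{lem:equisspace}, Lemma \ref{lem:estcontintime}); fixing the relevant points and letting $j\to\infty$, the left-hand sides converge by pointwise convergence of $U_j$ to $u$ while the right-hand sides are untouched, so (b) and (c) pass to $u$. The argument is essentially routine; the only points deserving attention are that the spatial estimate carries the factor $\max\{1,t^\alpha\}$, which is harmless on the bounded interval $[0,T]$, and that Arzel\`a--Ascoli yields convergence only along a subsequence and only locally uniformly since $\R^d$ is unbounded. Promoting this to convergence of the whole family requires uniqueness of the limiting object and is postponed to Theorem \ref{thm:MainResultsViscosity}.
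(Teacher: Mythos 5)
Your proposal is correct and follows essentially the same route as the paper: equiboundedness and equicontinuity from Lemma \ref{lem:equisspace} and Lemma \ref{lem:estcontintime}, Arzel\`a--Ascoli (with the standard diagonal argument over an exhaustion by balls, which the paper leaves implicit) to extract a locally uniformly convergent subsequence, and inheritance of the estimates by pointwise/uniform convergence. No gaps; your remark that full convergence of the whole family needs uniqueness and is deferred matches the paper's structure.
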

\begin{proof}
The existence of a locally uniform limit follows by Arzel\`a-Ascoli, since the solution of the scheme is equibounded and equicontinuous in $Q_T$ (Lemma \ref{lem:equisspace} and Lemma \ref{lem:estcontintime}). Also, the estimates are inherited by uniform convergence. 
\end{proof}

\subsection{Convergence to viscosity solutions}

Let us first introduce the concept of viscosity solutions. Recall that we are interested in the problem
\begin{equation}\label{eq:mainPDE}
\left\{
\begin{split}
	\partial_t^\alpha u(x,t)= D(x,t)\Le^{\mu,\sigma} [u](x,t), &  \quad \textup{for} \quad  (x,t) \in \mathbb{R}^d\times(0,T),\\
	u(x,0)=u_0(x),\hspace{2.25cm}  & \quad \textup{for} \quad x\in \mathbb{R}^d. 
\end{split}
\right.
\end{equation}
\begin{defi}\label{def:viscsol}

A function $u\in \textup{USC} (\R^d\times[0,T))$ (\textup{resp.} $u\in \textup{LSC} (\mathbb{R}^d\times[0,T))$) is a viscosity subsolution (\textup{resp.} supersolution) of \eqref{eq:mainPDE} if
\begin{enumerate}[\rm (a)]
	\item\label{def:viscsol-item1}  whenever $\phi\in C^2_b(\mathbb{R}^d\times[0,T))$ and $(x_0,t_0)\in\mathbb{R}^d\times(0,T) $ are such that $u-\phi$ attains a strict global maximum (\textup{resp.} minimum) at $(x_0,t_0)$,  then
	\[
	\begin{split}
		\partial_t^\alpha \phi(x_0,t_0)- D(x_0,t_0)\Le^{\mu,\sigma} [\phi](x_0,t_0) \leq& 0.\\
		(\textup{resp.} \geq&0) 
	\end{split}
	\]
	\item\label{def:viscsol-item2} $u(x,0)\leq u_0(x)$ (\textup{resp.} $u(x,0)\geq u_0(x)$).
\end{enumerate}
A function $u\in C_b(\mathbb{R}^d\times[0,T))$ is a viscosity solution of \eqref{eq:mainPDE} if it is both a viscosity subsolution and a viscosity supersolution.
\end{defi}

\begin{rem}\label{rem:Uniqueness}
Problem \eqref{eq:mainPDE} in the viscosity setting falls within the framework of the theory developed in \cite{topp2017existence} in the case where diffusivity does not explicitly depend on time, that is, $D=D(x)$. In that paper, the authors proved the existence and uniqueness of viscosity solutions to a general nonlinear parabolic problem with the Caputo derivative. Sufficient conditions for uniqueness of viscosity solutions for the local in space and fractional in time in the case where the coefficient can explicitly depend on time were given in \cite{namba2018existence}. Additional uniqueness results for weak solutions to similar equations can be found in \cite{allen2016parabolic,allen2017uniqueness}. 
\end{rem}

We have the following convergence result.

\begin{thm}\label{thm:convvisc}
Let the assumptions of Corollary \ref{coro:compactnessscheme} hold. Then the function $u$ given in Corollary \ref{coro:compactnessscheme} is a viscosity solution of \eqref{eq:mainPDE}. Furthermore, if viscosity solutions of \eqref{eq:mainPDE} are unique, the numerical scheme converges, that is,
\[
U\to u \quad \textup{locally uniformly in} \quad \R^d\times[0,T] \quad \textup{as} \quad (h,\tau)\to 0^+. 
\]  
\end{thm}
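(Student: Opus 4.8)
The plan is to run the by-now-classical monotone-scheme argument of Barles--Souganidis type, adapted to the nonlocal-in-time/nonlocal-in-space setting, and to combine it with the compactness already established in Corollary \ref{coro:compactnessscheme}. First, the limit $u$ lies in $C_b(Q_T)$ and enjoys the bound $\|u(\cdot,t)\|_{L^\infty(\R^d)}\le\|u_0\|_{L^\infty(\R^d)}$ and the spatial and temporal moduli of that corollary. The initial condition is immediate: $U(\cdot,0)=U^0=u_0$ for every discretization and $U\to u$ locally uniformly on $\R^d\times[0,T]$ (in particular at $t=0$), so $u(\cdot,0)=u_0$; equivalently, let $\tilde t\to 0$ in \eqref{equi:est1contlimit}. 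Since $u$ is continuous, condition (b) of Definition \ref{def:viscsol} holds with equality for both the sub- and the supersolution requirements, so only the test-function inequalities at interior points $(x_0,t_0)\in\R^d\times(0,T)$ remain.

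\emph{$u$ is a viscosity subsolution.} Fix $\phi\in C^2_b(\R^d\times[0,T))$ and $(x_0,t_0)$ such that $u-\phi$ has a strict global maximum at $(x_0,t_0)$, normalized so that $(u-\phi)(x_0,t_0)=0$. Along the convergent subsequence $(h_j,\tau_j)\to 0$, the interpolants $U_j$ converge to $u$ locally uniformly and are equibounded and equicontinuous on $Q_T$ with moduli independent of $j$ (Lemmas \ref{lem:equisspace} and \ref{lem:estcontintime}). After a standard localization --- splitting both $L_{h}$ and $\Le^{\mu,\sigma}$ into a near part ($|z|\le\rho$) and a far part ($|z|>\rho$), and using the uniform equicontinuity of $\{U_j\}$, the $L^\infty$-stability, and the uniform moment bound \eqref{As:webdd} to control the far parts uniformly in $j$ --- one picks, for each large $j$, points $(\xi_j,n_j)$ at which $U_j^{k}(x)-\phi(x,t_k)$ attains, up to an $o(1)$ error, its maximum over all $x\in\R^d$ and all time levels $0\le t_k\le T$; since $t_0>0$, this forces $t_{n_j}\to t_0>0$ (so $n_j\ge 1$), $\xi_j\to x_0$, and $\bigl(U_j^{n_j}-\phi(\cdot,t_{n_j})\bigr)(\xi_j)\to 0$. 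Two facts follow. First, since $U_j^{n_j}(\cdot)-\phi(\cdot,t_{n_j})$ is maximal up to $o(1)$ at $\xi_j$, adding this inequality at $\xi_j\pm z_\beta$ and multiplying by $\omega_\beta(h_j)\ge 0$ gives $L_{h_j}U_j^{n_j}(\xi_j)\le L_{h_j}[\phi(\cdot,t_{n_j})](\xi_j)+o(1)$. Second, because the L1 weights $b_{n-1}$ and $b_{n-k-1}-b_{n-k}$ in \eqref{def:disccap} are nonnegative and sum to one, and $U_j^{k}(\xi_j)\le\phi(\xi_j,t_k)+\bigl(U_j^{n_j}-\phi(\cdot,t_{n_j})\bigr)(\xi_j)+o(1)$ for $0\le k\le n_j$, a direct computation yields $\partial_{\tau_j}^\alpha U_j^{n_j}(\xi_j)\ge\partial_{\tau_j}^\alpha[\phi(\xi_j,\cdot)](t_{n_j})-o(1)$. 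Inserting both into the scheme \eqref{eq:NumSch} and using $D^{n_j}\ge 0$ gives
\[
\partial_{\tau_j}^\alpha[\phi(\xi_j,\cdot)](t_{n_j})\le D^{n_j}(\xi_j)\,L_{h_j}[\phi(\cdot,t_{n_j})](\xi_j)+o(1).
\]
Letting $j\to\infty$: $D$ is continuous so $D^{n_j}(\xi_j)\to D(x_0,t_0)$; by \eqref{As:conweak} applied to $\phi(\cdot,t)$ (uniformly in $t$) together with continuity of $(x,t)\mapsto\Le^{\mu,\sigma}\phi(x,t)$ for $\phi\in C^2_b$, $L_{h_j}[\phi(\cdot,t_{n_j})](\xi_j)\to\Le^{\mu,\sigma}\phi(x_0,t_0)$; and since $\phi\in C^2_b$ and $t_{n_j}$ stays bounded away from $0$, consistency of the L1 scheme for $C^2$ functions on $[t_0/2,T]$ gives $\partial_{\tau_j}^\alpha[\phi(\xi_j,\cdot)](t_{n_j})\to\partial_t^\alpha\phi(x_0,t_0)$. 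Hence $\partial_t^\alpha\phi(x_0,t_0)-D(x_0,t_0)\Le^{\mu,\sigma}\phi(x_0,t_0)\le 0$. The supersolution inequality follows from the symmetric argument (strict global minimum, near-minimum points, reversed inequalities), so $u$ is a viscosity solution of \eqref{eq:mainPDE}.

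\emph{Full convergence under uniqueness.} Assume viscosity solutions of \eqref{eq:mainPDE} are unique and suppose, for contradiction, that $U$ does not converge to $u$ locally uniformly as $(h,\tau)\to 0^+$. Then there are $\veps>0$, a compact $K\subset Q_T$ and a sequence $(h_k,\tau_k)\to 0$ with $\|U_{(h_k,\tau_k)}-u\|_{L^\infty(K)}\ge\veps$. Since the estimates of Lemmas \ref{lem:equisspace} and \ref{lem:estcontintime} hold for every $(h,\tau)$, Arzel\`a--Ascoli provides a further subsequence converging locally uniformly to some $\tilde u\in C_b(Q_T)$, which by the argument above is a viscosity solution of \eqref{eq:mainPDE}; by uniqueness $\tilde u=u$, contradicting $\|U_{(h_k,\tau_k)}-u\|_{L^\infty(K)}\ge\veps$. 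Therefore $U\to u$ locally uniformly as $(h,\tau)\to 0^+$.

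\emph{Where the work is.} The delicate step is the passage to the limit in the nonlocal-in-space operator on the unbounded domain $\R^d$: one must ensure that the discrete near-extremal points $(\xi_j,n_j)$ remain in a fixed compact neighbourhood of $(x_0,t_0)$ and that the one-sided comparison $L_{h_j}U_j^{n_j}(\xi_j)\le L_{h_j}[\phi(\cdot,t_{n_j})](\xi_j)+o(1)$ survives for the full infinite-range operator. This is precisely what the near/far splitting, together with the uniform equicontinuity estimates and the moment bound \eqref{As:webdd}, is designed to handle, and it is where the uniformity in $j$ of the regularity estimates of Section \ref{sec:PropNumSch} is essential. The second, more structural, point is the limit of the discrete Caputo term: it goes through only because the L1 weights in \eqref{def:disccap} are nonnegative and sum to one, which makes $\partial_\tau^\alpha$ order-preserving at a discrete maximum --- the discrete counterpart of the monotonicity property of \eqref{eq:capdef} highlighted in the Remark following that definition.
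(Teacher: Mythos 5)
Your argument is essentially the paper's: a Barles--Souganidis-type proof that exploits the nonnegativity of the L1 weights (which sum to $b_0=1$) and of the spatial weights $\omega_\beta(h)$ to compare the scheme with the test function at (near-)extremal points, then invokes consistency of both discretizations and continuity of $D$, $\Le^{\mu,\sigma}\phi$, $\partial_t^\alpha\phi$; the uniqueness-implies-full-convergence step is the standard subsequence argument, which the paper only states in one line. The only real difference is technical: the paper introduces the piecewise-constant-in-time extension $V$ (Lemmas \ref{lem:Vconv} and \ref{lem:eqV}) together with the extended operators $\overline{\partial}^\alpha_\tau$, $\overline{D}$ and works with exact maximum points of $V-\phi$, whereas you stay at grid time levels and use approximate maxima; your route avoids the extension and the $\overline{D}\to D$ correction, at the price of carrying the near-maximality slack.

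One point needs tightening in your version: the tolerance in "maximal up to an $o(1)$ error'' cannot be an unquantified $o(1)$. When you insert the near-maximality inequality into $\partial_{\tau_j}^\alpha$ the slack is multiplied by $\tau_j^{-\alpha}/\Gamma(2-\alpha)$, and when you sum over $\beta$ in $L_{h_j}$ it is multiplied by $2\sum_{\beta\neq 0}\omega_\beta(h_j)$, which under \eqref{As:wepos}--\eqref{As:webdd} is finite for fixed $h_j$ but typically blows up as $h_j\to 0$. So the two displayed comparisons only hold with $o(1)$ right-hand sides if, for each $j$, you choose the near-maximizing point within a tolerance $\delta_j$ satisfying $\delta_j=o\bigl(\tau_j^{\alpha}\bigr)$ and $\delta_j=o\bigl((\sum_{\beta\neq 0}\omega_\beta(h_j))^{-1}\bigr)$; since the relevant suprema are finite for each fixed $(h_j,\tau_j)$, such a choice is always possible, so this is an easy repair rather than a genuine obstruction.
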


To prove the above result, we need to prove several intermediate steps. For technical reasons in the convergence proof, we will use a piecewise constant in time extension of the discrete-in-time scheme. More precisely, consider the function $V:\R^d\times[0,T)\to \R $ defined by
\begin{equation}
V(x,t) = U^k(x), \quad \textup{for all} \quad t\in [t_{k}, t_{k+1}).
\end{equation}
We will first prove that $V$ also converges locally uniformly to the same limit as $U$ given in Corollary \ref{coro:compactnessscheme}, up to the same subsequence.

\begin{lem}\label{lem:Vconv}
Let the assumptions of Corollary \ref{coro:compactnessscheme} hold. Let $u$ and $(h_j,\tau_j)$ be the sequence given in Corollary \ref{coro:compactnessscheme}.  Then, 
\[
V\to u \quad \textup{locally uniformly in} \quad \R^d\times[0,T] \quad \textup{as} \quad j\to \infty. 
\]
\end{lem}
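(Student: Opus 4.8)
The plan is to show that the piecewise-constant extension $V$ and the piecewise-linear extension $U$ stay uniformly close on $\R^d\times[0,T)$ as $\tau\to0$, so that $V$ inherits the locally uniform limit of $U$ obtained in Corollary \ref{coro:compactnessscheme}. The key observation is that $V$ and $U$ are built from the same discrete sequence $(U^k)_k$ and coincide at the grid nodes, since $V(x,t_k)=U^k(x)=U(x,t_k)$; the only discrepancy comes from the interpolation between consecutive nodes. First I would write, for $t\in[t_k,t_{k+1})$,
\[
V(x,t)-U(x,t)= U^k(x)-\Big(\tfrac{t_{k+1}-t}{\tau}U^k(x)+\tfrac{t-t_k}{\tau}U^{k+1}(x)\Big)=-\tfrac{t-t_k}{\tau}\big(U^{k+1}(x)-U^k(x)\big),
\]
so that $\|V(\cdot,t)-U(\cdot,t)\|_{L^\infty(\R^d)}\le\|U^{k+1}-U^k\|_{L^\infty(\R^d)}$ for every such $t$.

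Next I would invoke the time equicontinuity of the scheme, namely estimate \eqref{equi:est2} of Corollary \ref{cor:EquicontinuityTime}, applied to the consecutive indices $m=k$, $n=k+1$ (so $t_n-t_m=\tau$):
\[
\|U^{k+1}-U^k\|_{L^\infty(\R^d)}\le C\big((t_{k+1}^{\alpha-1}+1)\tau\big)^{a/r}.
\]
Since $t_{k+1}=(k+1)\tau\ge\tau$ and $\alpha-1<0$, the map $s\mapsto s^{\alpha-1}$ is decreasing, whence $t_{k+1}^{\alpha-1}\tau\le\tau^{\alpha-1}\tau=\tau^{\alpha}$; for $\tau\le1$ the right-hand side is then bounded by $C(\tau^{\alpha}+\tau)^{a/r}\le C\tau^{\alpha a/r}$, uniformly in $k$. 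Therefore
\[
\sup_{t\in[0,T)}\|V(\cdot,t)-U(\cdot,t)\|_{L^\infty(\R^d)}\le C\tau^{\alpha a/r}\xrightarrow[\tau\to0^+]{}0,
\]
with $C$ depending only on the data listed in Corollary \ref{cor:EquicontinuityTime} (and independent of $h$). The endpoint $t=T$ can be handled by setting $V(\cdot,T):=U^{\lfloor T/\tau\rfloor}(\cdot)$, which satisfies the same bound.

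Finally, along the subsequence $(h_j,\tau_j)$ from Corollary \ref{coro:compactnessscheme} we have $U\to u$ locally uniformly on $\R^d\times[0,T]$, and we have just shown $\|V-U\|_{L^\infty(\R^d\times[0,T))}\to0$; the triangle inequality then gives $V\to u$ locally uniformly on $\R^d\times[0,T]$ along the same subsequence, which is the claim. The only mildly delicate point — what one might expect to be an obstacle — is the behaviour near $t=0$, where the one-step increments $\|U^{k+1}-U^k\|_{L^\infty}$ do not decay like $\tau$ but only like $\tau^{\alpha a/r}$ (this is precisely the blow-up of $\partial_t u$ at the origin); nevertheless this rate is still $o(1)$ and is uniform in $k$ and $x$, so uniform closeness of $V$ and $U$, hence the conclusion, is unaffected.
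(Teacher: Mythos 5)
Your proposal is correct and follows essentially the same route as the paper: bound $\|V(\cdot,t)-U(\cdot,t)\|_{L^\infty}$ by the one-step increment $\|U^{k+1}-U^k\|_{L^\infty}$ via the interpolation formula, control it uniformly by $C\tau^{\alpha a/r}$ using the time equicontinuity of Corollary \ref{cor:EquicontinuityTime}, and conclude by the triangle inequality with the locally uniform convergence $U\to u$ along the subsequence. The only cosmetic difference is that you invoke estimate \eqref{equi:est2} and simplify, whereas the paper uses \eqref{equi:est1} directly; both yield the same uniform bound.
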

\begin{proof}
First we note that
\[
|V(x,t)-u(x,t)|\leq |U(x,t)-u(x,t)|+ |U(x,t)-V(x,t)|.
\]
By Corollary \ref{coro:compactnessscheme}, the first term of the right-hand side converges locally uniformly to 0. On the other hand, if $t\in [t_k,t_{k+1})$, we can use the definition of interpolants and the uniform-in-time continuity in Corollary \ref{cor:EquicontinuityTime} to get
\[
\begin{split}
	|U(x,t)-V(x,t)|&=\left| \frac{t_{k+1}-t}{\tau} U^{k}(x) + \frac{t-t_k}{\tau} U^{k+1}(x)-U^{k}(x)\right| \\
	&=  \frac{t-t_k}{\tau}|U^{k+1}(x)- U^{k}(x)|\\
	&\leq |U^{k+1}(x)- U^{k}(x)|\\
	& \leq \widetilde{C}\tau^{\alpha \frac{a}{r}},
\end{split}
\]
which concludes also uniform convergence along the subsequence $(h_j,\tau_j)$.
\end{proof}

An important advantage of $V$ with respect to $U$ is the fact that it also satisfies the scheme  outside grid points for a modified version of $D$. We extend the notion of a discrete Caputo derivative \eqref{def:disccap} to hold for all times. More precisely, given $f:\R_+\to \R$, define the operator
\begin{equation}
\overline{\partial}^\alpha_\tau f(t)= \frac{\tau^{-\alpha}}{\Gamma(2-\alpha)} \left(f(t) - b_{n-1}f(t-t_n) - \sum_{k=1}^{n-1} (b_{n-k-1} - b_{n-k}) f(t-t_n+t_k)\right), \quad b_i := (i+1)^{1-\alpha} - i^{1-\alpha},
\end{equation}
defined for all $t\in[t_{n},t_{n+1})$ with $n\geq1$. Note that, if $t=t_n$ for some $n\geq 1$, then $\overline{\partial}^\alpha_\tau f(t_n)=\partial^\alpha_\tau f^n$. Let us also define 
\[
\overline{D}(x,t)=D^n(x), \quad \textup{for all} \quad t\in[t_{n},t_{n+1}).
\]

\begin{lem}\label{lem:eqV}Let $V$ be defined as above. Then, $V(x,t)=U^0(x)$ if $t\in[0,\tau)$, and
\[
\overline{\partial}^\alpha_\tau V(x,t)= \overline{D}(x,t)L_h V(x,t),
\]
for all $t\in [\tau, T)$.
\end{lem}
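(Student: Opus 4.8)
The plan is to unwind the definitions of $V$, $\overline{\partial}^\alpha_\tau$, and $\overline{D}$ directly and reduce everything to the scheme \eqref{eq:NumSch} evaluated at grid points. First I would handle the trivial case: for $t\in[0,\tau)$ the definition of $V$ gives $V(x,t)=U^0(x)$ immediately, since $[0,\tau)=[t_0,t_1)$. So the content is entirely in the range $t\in[\tau,T)$.

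\textbf{Reduction to grid points.} Fix $t\in[t_n,t_{n+1})$ with $n\geq 1$. By definition $V(x,t)=U^n(x)$ and $\overline D(x,t)=D^n(x)$, so $\overline D(x,t)L_h V(x,t)=D^n(x)L_hU^n(x)$. It remains to compute the left-hand side $\overline{\partial}^\alpha_\tau V(x,t)$. The key observation is that the evaluation points appearing in the definition of $\overline{\partial}^\alpha_\tau$, namely $t-t_n$ and $t-t_n+t_k$ for $k=1,\dots,n-1$, fall exactly into the right grid intervals: since $t\in[t_n,t_{n+1})$ we have $t-t_n\in[0,\tau)=[t_0,t_1)$, so $V(x,t-t_n)=U^0(x)$; and $t-t_n+t_k\in[t_k,t_{k+1})$, so $V(x,t-t_n+t_k)=U^k(x)$. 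Substituting these into the definition of $\overline{\partial}^\alpha_\tau$ collapses it to
\[
\overline{\partial}^\alpha_\tau V(x,t)=\frac{\tau^{-\alpha}}{\Gamma(2-\alpha)}\left(U^n(x)-b_{n-1}U^0(x)-\sum_{k=1}^{n-1}(b_{n-k-1}-b_{n-k})U^k(x)\right)=\partial^\alpha_\tau U^n(x).
\]

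\textbf{Conclusion.} By the scheme \eqref{eq:NumSch}, $\partial^\alpha_\tau U^n(x)=D^n(x)L_hU^n(x)=\overline D(x,t)L_hV(x,t)$, which is the claimed identity. Since $t\in[t_n,t_{n+1})$ with $n\geq 1$ was arbitrary, this holds for all $t\in[\tau,T)$.

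\textbf{Main obstacle.} There is no serious obstacle here; the lemma is essentially a bookkeeping statement asserting that the piecewise-constant-in-time extensions of $U$ and $D$, together with the extended operator $\overline{\partial}^\alpha_\tau$, were \emph{designed} so that the discrete equation propagates to all times. The only point requiring a moment's care is checking that each shifted argument $t-t_n+t_k$ lands in the half-open interval $[t_k,t_{k+1})$ (using $t-t_n\in[0,\tau)$), so that the piecewise-constant rule picks out precisely $U^k$; the half-open convention in the definition of $V$ is exactly what makes this clean. One should also note the consistency remark already recorded in the text: when $t=t_n$ the extended operator agrees with $\partial^\alpha_\tau$, so the identity is genuinely an extension of the scheme rather than a new condition.
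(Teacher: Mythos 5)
Your proof is correct and follows essentially the same route as the paper: unwind the definitions of $V$, $\overline{D}$, and $\overline{\partial}^\alpha_\tau$ on the interval $[t_n,t_{n+1})$, observe that each shifted argument $t-t_n+t_k$ lands in $[t_k,t_{k+1})$ so that $V$ evaluates to $U^k$, and invoke the scheme \eqref{eq:NumSch}. Your explicit check of where the shifted arguments land is a slightly more careful spelling-out of a step the paper treats as immediate.
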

\begin{proof}
Clearly, $V(x,t)=U^0(x)$ if $t\in[0,\tau)$ by definition. Now let $t\in[t_n,t_{n+1})$ for some $n\geq1$. Using  directly the definition of $V$ and the scheme satisfied by $U$, we get
\[
\begin{split}
	\overline{\partial}^\alpha_\tau V(x,t)&=  \frac{\tau^{-\alpha}}{\Gamma(2-\alpha)} \left(V(x,t) - b_{n-1}V(x,t-t_n) - \sum_{k=1}^{n-1} (b_{n-k-1} - b_{n-k}) V(x,t-t_n+t_k)\right)\\
	&=\frac{\tau^{-\alpha}}{\Gamma(2-\alpha)} \left(U^n(x) - b_{n-1}U^0(x) - \sum_{k=1}^{n-1} (b_{n-k-1} - b_{n-k}) U^k(x)\right)\\
	&= \partial_\tau^\alpha U^n(x)\\
	&=D^n(x)L_h U^n(x)\\
	&= \overline{D}(x,t) \sum_{k=1}^\infty\left(U^n(x+y_k)+U^n(x-y_k)-2U^n(x)\right)\omega_k(h)\\
	&= \overline{D}(x,t) \sum_{k=1}^\infty\left(V(x+y_k,t)+V(x-y_k,t)-2V(x,t)\right)\omega_k(h)\\
	&= \overline{D}(x,t) L_h V(x,t).
\end{split}
\]
\end{proof}

We are now ready to prove Theorem \ref{thm:convvisc}. 

\begin{proof}[Proof of Theorem \ref{thm:convvisc}]
We will first prove that $u$ is a viscosity solution of \eqref{eq:mainPDE}. We do the argument only for subsolutions, since the one for supersolutions follows in the same way. First note that, by uniform convergence, 
\[
u(x,0)=\lim_{(h_j,\tau_j)\to0} U(x,0) = u_0(x),
\]
so the initial data is taken according to Definition \ref{def:viscsol} \eqref{def:viscsol-item2}. Now, let $(x_0,t_0)\in \R^d\times(0,T)$ and $\phi$ be as in the Definition \ref{def:viscsol} \eqref{def:viscsol-item1}. By uniform convergence (Lemma \ref{lem:Vconv}), we have that there exists a sequence $\{(x_\veps,t_\veps)\}$ such that
\begin{enumerate}[(i)]
	\item $V(x_\veps,t_\veps)-\phi(x_\veps,t_\veps)= \sup_{(x,t)\in \mathbb{R}^d\times[0,T]}(V(x,t)-\phi(x,t))=:M_\veps,$
	\item $V(x_\veps,t_\veps)-\phi(x_\veps,t_\veps)> V(x,t)-\phi(x,t),$ for all $(x,t)\in \mathbb{R}^d\times[0,T]\setminus(x_\veps,t_\veps)$, 
\end{enumerate}
and $(x_\veps,t_\veps)\to (x_0,t_0)$ as $\veps=(\tau,h)\to0$. Taking $\tau>0$ small enough such that $\{(x_\veps,t_\veps)\}\subset \R^d\times [\tau,T)$, we have, by Lemma \ref{lem:eqV}, that
\[
\overline{\partial}^\alpha_\tau V(x_\veps,t_\veps)=\overline{D}(x_\veps,t_\veps) L_h V(x_\veps,t_\veps).
\]
By basic properties of the operators $\overline{\partial}^\alpha_\tau$ and $L_h$ we have
\[
\overline{\partial}^\alpha_\tau (V-M_\veps)(x_\veps,t_\veps)=\overline{D}(x_\veps,t_\veps) L_h (V-M_\veps)(x_\veps,t_\veps).
\]
Let $t_n=n\tau$ be such that $t_\veps\in[t_n,t_{n+1}]$. Since $V(x_\veps,t_\veps)-M_\veps=\phi(x_\veps,t_\veps)$ and $V(x,t)-M_\veps\leq \phi(x,t)$, we then have
\[
\begin{split}
	&\overline{\partial}^\alpha_\tau (V-M_\veps)(x_\veps,t_\veps)\\
	&= \frac{\tau^{-\alpha}}{\Gamma(2-\alpha)} \left((V(x_\veps,t_\veps)-M_\veps) - b_{n-1}(V(x_\veps,t_\veps-t_n)-M_\veps) - \sum_{k=1}^{n-1} (b_{n-k-1} - b_{n-k}) (V(x_\veps,t_\veps-t_n+t_k)-M_\veps)\right)\\
	&= \frac{\tau^{-\alpha}}{\Gamma(2-\alpha)} \left(\phi(x_\veps,t_\veps) - b_{n-1}(V(x_\veps,t_\veps-t_n)-M_\veps) - \sum_{k=1}^{n-1} (b_{n-k-1} - b_{n-k}) (V(x_\veps,t_\veps-t_n+t_k)-M_\veps)\right)\\
	&\geq \frac{\tau^{-\alpha}}{\Gamma(2-\alpha)} \left(\phi(x_\veps,t_\veps) - b_{n-1}\phi(x_\veps,t_\veps-t_n) - \sum_{k=1}^{n-1} (b_{n-k-1} - b_{n-k}) \phi(x_\veps,t_\veps-t_n+t_k)\right)\\
	&= \overline{\partial}^\alpha_\tau \phi(x_\veps,t_\veps).
\end{split}
\]
On the other hand,
\[
\begin{split}
	L_h (V-M_\veps)(x_\veps,t_\veps)&=\sum_{k=1}^\infty\left((V-M_\veps)(x_\veps+y_k, t_\veps)+(V-M_\veps)(x_\veps-y_k,t_\veps)-2(V-M_\veps)(x_\veps,t_\veps)\right)\omega_k(h)\\
	&=\sum_{k=1}^\infty\left((V-M_\veps)(x_\veps+y_k, t_\veps)+(V-M_\veps)(x_\veps-y_k,t_\veps)-2\phi(x_\veps,t_\veps)\right)\omega_k(h)\\
	&\leq\sum_{k=1}^\infty\left(\phi(x_\veps+y_k, t_\veps)+\phi(x_\veps-y_k,t_\veps)-2\phi(x_\veps,t_\veps)\right)\omega_k(h)\\
	&= L_h \phi (x_\veps,t_\veps).
\end{split}
\]
Combining the above inequalities,
\[
\overline{\partial}^\alpha_\tau \phi(x_\veps,t_\veps)\leq \overline{D}(x_\veps,t_\veps) L_h \phi (x_\veps,t_\veps).
\]
Finally, we rewrite it as
\[
\overline{\partial}^\alpha_\tau \phi(x_\veps,t_\veps)\leq D(x_\veps,t_\veps)  L_h \phi (x_\veps,t_\veps) +(\overline{D}(x_\veps,t_\veps)-D(x_\veps,t_\veps))L_h \phi (x_\veps,t_\veps).
\]
Note that, by regularity of $\phi$ and Remark \ref{rem:boundwei},
\[
|\overline{D}(x_\veps,t_\veps)-D(x_\veps,t_\veps)||L_h \phi (x_\veps,t_\veps) |\leq C \sup_{(x,t)\in \R^d\times[0,T]} |\overline{D}(x,t)-D(x,t)|\to 0 \quad \textup{as} \quad h,\tau\to0.
\]
We use now consistency of both the space and time discretizations to get
\[
\partial_t^\alpha \phi(x_\veps,t_\veps)\leq D(x_\veps,t_\veps)  \Le^{\mu,\sigma}  \phi (x_\veps,t_\veps) + o_\veps(1),
\]
and taking limits as $\veps\to0$, using the regularity of $\phi$ and continuity of $\Le^{\mu,\sigma}$ and $\partial_t^\alpha$ we get
\[
\partial_t^\alpha \phi(x_0,t_0)\leq D(x_0,t_0)  \Le^{\mu,\sigma}  \phi (x_0,t_0).
\]
This concludes the proof that $u$ is a viscosity subsolution.

Finally, if the uniqueness of viscosity solutions holds, the whole sequence $U$ converges, and the result obtained is the result stated.
\end{proof}

\subsection{Convergence to classical solutions}
In some cases, we are able to prove that $u$, the limit of solutions of the numerical scheme, is actually a classical solution. This only works in the purely nonlocal in space case, and we will just write $\Le^{\mu}$ instead of $\Le^{\mu,\sigma}$ since the local part will not be present.

\begin{thm}\label{thm:ConvergencePointwise}
Assume \eqref{As:wepos}, \eqref{As:webdd}, \eqref{As:constrong}, \eqref{As:Le} for some $r\in[0,1)$ and $h,\tau>0$. Let $u_0\in X^a_b(\R^d)$ for some $a\in(r,1]$, $D\in C_b([0,T]: X^r_b(\R^d))\cap C_b(\R^d: Lip([0,T]))$ nonnegative and $U^n$ satisfy \eqref{eq:NumSch}. Then, the limit $u$ given in Corollary \ref{coro:compactnessscheme} satisfies \eqref{eq:mainPDE} in the pointwise sense.
\end{thm}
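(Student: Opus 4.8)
The plan is to fix an arbitrary $(x_0,t_0)\in\R^d\times(0,T)$ and pass to the limit, along the subsequence $(h_j,\tau_j)\to0$ of Corollary~\ref{coro:compactnessscheme}, in the scheme \eqref{eq:NumSch} written at $(x_0,t_{n_j})$ with $n_j:=\lfloor t_0/\tau_j\rfloor$, so that $t_{n_j}\to t_0$; the identity $u(\cdot,0)=u_0$ already follows from the uniform convergence. Two uniform bounds, both consequences of $a>r$, are the backbone. First, since $u_0\in X^a_b(\R^d)\subseteq X^r_b(\R^d)$, Remark~\ref{rem:boundwei} gives $\|L_hu_0\|_{L^\infty(\R^d)}\le C$ uniformly in $h$, hence Lemma~\ref{lem:decLh} yields $\|L_hU^n\|_{L^\infty(\R^d)}\le C$ uniformly in $h,\tau$ and $t_n\in[0,T]$, and Theorem~\ref{thm:RegularityTime} yields $\tau^{-1}\|U^n-U^{n-1}\|_{L^\infty(\R^d)}\le C(1+t_n^{\alpha-1})$. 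Summing the latter as in the proof of Corollary~\ref{cor:EquicontinuityTime}, the interpolant $U$ of \eqref{eq:interptime} satisfies $\|U(\cdot,\tilde t)-U(\cdot,t)\|_{L^\infty(\R^d)}\le C(\tilde t^{\alpha-1}+1)(\tilde t-t)$ for $0\le t<\tilde t$; in particular $t\mapsto U(x_0,t)$ is equi-Lipschitz on every $[t_0/4,T]$ and satisfies $\|U(\cdot,\tilde t)-U(\cdot,0)\|_{L^\infty(\R^d)}\le C\tilde t^{\alpha}$.

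For the time-marching term I would use that the L1 operator \eqref{def:disccap} is exactly the Caputo derivative \eqref{eq:capdef} of the piecewise linear interpolant: integrating by parts on each $[t_k,t_{k+1}]$ (the computation of the Remark following \eqref{eq:capdef}, read backwards) gives $\partial^\alpha_\tau U^n(x)=\partial^\alpha_t\big[U(x,\cdot)\big](t_n)$, the right side being \eqref{eq:capdef} applied to $U(x,\cdot)$. Since $U(x_0,\cdot)\to u(x_0,\cdot)$ uniformly on $[0,T]$, the term $\frac{U(x_0,t_{n_j})-U(x_0,0)}{t_{n_j}^\alpha}$ converges to $\frac{u(x_0,t_0)-u(x_0,0)}{t_0^\alpha}$, while for the singular integral I would split $\int_0^{t_{n_j}}\frac{U(x_0,t_{n_j})-U(x_0,s)}{(t_{n_j}-s)^{1+\alpha}}\,\dd s$ at $s=t_0/4$ and dominate: on $[0,t_0/4]$ the integrand is bounded by $Ct_0^{-1-\alpha}$, and on $[t_0/4,t_{n_j}]$ by $C(t_0)(t_{n_j}-s)^{-\alpha}$ via the equi-Lipschitz bound, both dominating functions being integrable. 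Dominated convergence then gives $\partial^\alpha_{\tau_j}U^{n_j}(x_0)\to\partial^\alpha_t u(x_0,t_0)$, a finite quantity because $u(x_0,\cdot)$ is Lipschitz on $[t_0/4,T]$ and bounded.

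The real obstacle is the diffusion term: showing $L_{h_j}U^{n_j}(x_0)\to\Le^\mu u(x_0,t_0)$ (no local part, since $r<1$). The target is well posed: the space modulus inherited in Corollary~\ref{coro:compactnessscheme} bounds the second difference of $u(\cdot,t_0)$ by $C(1+t_0^\alpha)\min\{|z|^a+|z|^r,1\}$, which is $\mu$-integrable by \eqref{As:Le}. For the convergence I would mollify at scale $\delta$, put $w_\delta:=u(\cdot,t_0)*\rho_\delta\in C^\infty_b(\R^d)$, bound $\|w_\delta\|_{X^a(\R^d)}\le C(t_0)(\delta^{r-a}+\|u_0\|_{L^\infty(\R^d)})$ from the (merely $X^r_b$) space modulus of $u(\cdot,t_0)$, and split
\[
\big|L_hU^n(x_0)-\Le^\mu u(x_0,t_0)\big|\le\big|L_h(U^n-w_\delta)(x_0)\big|+\big|L_hw_\delta(x_0)-\Le^\mu w_\delta(x_0)\big|+\big|\Le^\mu(w_\delta-u(\cdot,t_0))(x_0)\big|.
\]
The middle term is, for fixed $\delta$, of size $O\big((h/\delta)^{a-r}\big)$ by \eqref{As:constrong}, hence negligible once $\delta=\delta(h)$ is chosen with $h/\delta\to0$. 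For the last term I would split the integral defining $\Le^\mu$ into $|z|<\rho$, $\rho\le|z|<R$ and $|z|\ge R$ and use that $w_\delta-u(\cdot,t_0)$ is small in $L^\infty$ with modulus $\lesssim\min\{|z|^a+|z|^r,1\}$, together with the absolute continuity of $\mu$ near the origin (so that $\int_{0<|z|<\rho}(|z|^a+|z|^r)\,\dd\mu\to0$ as $\rho\to0$). For the first term I would first establish that the discrete measures $\sum_{\beta\neq0}\omega_\beta(h)\delta_{z_\beta}$ converge vaguely to $\mu$ on $\R^d\setminus\{0\}$ (testing \eqref{As:conweak} against symmetric $C^2_b$ functions, together with the uniform mass bounds of \eqref{As:webdd}), and then combine this with the equiboundedness $\|L_hU^n\|_{L^\infty(\R^d)}\le C$ and the uniform space modulus to pass $h\to0$ on bounded annuli while controlling the near-origin and far tails. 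The inequality $a>r$ is used repeatedly here: it makes $\sum_{0<|z_\beta|<\rho}|z_\beta|^a\omega_\beta(h)\le\rho^{a-r}C_r$ uniformly small and supplies the positive exponent in \eqref{As:constrong}.

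Letting first $R\to\infty$, then $\rho\to0$, then $(h_j,\tau_j)\to0$ gives $L_{h_j}U^{n_j}(x_0)\to\Le^\mu u(x_0,t_0)$; combining this with the time-term limit and with $D^{n_j}(x_0)=D(x_0,t_{n_j})\to D(x_0,t_0)$ and passing to the limit in \eqref{eq:NumSch} yields $\partial^\alpha_t u(x_0,t_0)=D(x_0,t_0)\Le^\mu u(x_0,t_0)$, which with $u(\cdot,0)=u_0$ shows that $u$ solves \eqref{eq:mainPDE} pointwise. I expect the bulk of the work to be the third paragraph, and specifically the two mollification‑error estimates $\big|L_h(U^n-w_\delta)(x_0)\big|$ and $\big|\Le^\mu(w_\delta-u(\cdot,t_0))(x_0)\big|$: because the diffusivity is only $X^r_b$ in space, $u(\cdot,t_0)$ is at positive times only $X^r_b$ as well, so \eqref{As:constrong} cannot be applied to $u(\cdot,t_0)$ directly and the whole comparison must be routed through mollifications balanced against the vague convergence of the discrete weights.
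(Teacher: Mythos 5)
Your treatment of the time term is fine and is essentially what the paper has in mind (it omits the details, pointing to Proposition \ref{prop:L1Truncation} and the time regularity); identifying $\partial^\alpha_\tau U^n$ with the continuous Caputo derivative of the piecewise linear interpolant and passing to the limit by dominated convergence works. For the space term, however, you depart from the paper and the route you sketch has a genuine gap. The paper does not mollify: it splits $|L_hU_h-\Le^\mu u|\le |L_hU_h-\Le^\mu U_h|+|\Le^\mu U_h-\Le^\mu u|$, applies the strong consistency \eqref{As:constrong} \emph{directly to the discrete solution} $U_h(\cdot,t)$, using the uniform $X^a_b$ bound it reads off from Lemma \ref{lem:equisspace}, and treats the second term by dominated convergence with the dominating function $\min\{|z|^a,1\}$ and \eqref{As:Le}. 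You explicitly decline to use any exponent-$a$ bound on $U^n$ or $u(\cdot,t_0)$ at positive times (on the grounds that $D$ is only $X^r_b$ in space), and replace it by a mollification at scale $\delta$ plus a vague-convergence argument for the discrete measures $\sum_\beta\omega_\beta(h)\delta_{z_\beta}$.

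The gap is in the first term of your split, $|L_h(U^n-w_\delta)(x_0)|$, at small scales. The smallness you invoke, $\sum_{0<|z_\beta|<\rho}|z_\beta|^a\omega_\beta(h)\le\rho^{a-r}C_r$, is only useful if the function it is paired with has second differences of order $|z|^a$ with $a>r$; but by your own diagnosis $U^n$ (and hence $U^n-w_\delta$) is only known to have an exponent-$r$ modulus for $t>0$, and with exponent $r$ assumption \eqref{As:webdd} gives merely $\sum_{0<|z_\beta|<\rho}|z_\beta|^r\omega_\beta(h)\le C_r$, which is bounded but not uniformly small in $h$ as $\rho\to0$ (one can construct admissible weight families, satisfying \eqref{As:wepos}, \eqref{As:webdd}, \eqref{As:conweak}, \eqref{As:constrong}, for which a fixed amount of $r$-moment concentrates at scale $h$). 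Likewise, vague convergence of the weights away from the origin, even granted, says nothing about these near-origin contributions, and your proposed order of limits ($R\to\infty$, $\rho\to0$ before $h\to0$) presupposes exactly the uniform-in-$h$ tail and near-origin bounds that are missing. So the decisive estimate is not closed: either you must secure an exponent strictly above $r$ for the space modulus of $U^n$ uniformly in $h$ and $t$ (which is what the paper uses, via Lemma \ref{lem:equisspace}, to apply \eqref{As:constrong} to $U_h$ itself and get $I\le C h^{a-r}$), or you need a genuinely different mechanism for the small-$|z_\beta|$ part; the mollification does not supply one, since $w_\delta$ only helps in the intermediate consistency term. A minor additional point: for your third term $|\Le^\mu(w_\delta-u(\cdot,t_0))(x_0)|$ no ``absolute continuity of $\mu$'' is needed, plain dominated convergence against $\min\{|z|^r,1\}$ suffices, exactly as in the paper's term $II$.
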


\begin{proof}
The initial condition $u(x,0)=u_0(x)$ is satisfied trivially by uniform convergence. For notational convenience, in this proof we write $U_h$ to state the dependence of $U$ on the numerical parameters.

We now prove that $L_h U\to \Le^{\mu} u$ pointwise. Note that, by Corollary \ref{coro:compactnessscheme}, we have that $u(\cdot,t)\in X^a_b(\R^d)$ uniformly in $t$, so that $\Le^\mu u$ is defined pointwise by \eqref{As:Le}, since $a>r$. In the same way, by Lemma \ref{lem:equisspace}, $U_h(\cdot,t)\in X^a_b(\R^d)$ uniformly in $h$ and $t$, and thus $\Le^\mu U_h$ is again defined pointwise by \eqref{As:Le}, since $a>r$.

First, we split
\[
|L_h U_h(x,t) - \Le^\mu u(x,t)| \leq |L_h U_h(x,t) - \Le^\mu U_h(x,t)| + |\Le^\mu U_h(x,t) - \Le^\mu u(x,t)| = I+ II.
\]
On one hand, by \eqref{As:constrong} and Lemma  \ref{lem:equisspace},
\[
I \leq \|U_h(\cdot,t)\|_{X^a_b(\R^d)} h^{r-a} \leq C h^{r-a} \to 0 \quad \textup{as} \quad h\to0. 
\]
Now, denoting $W_h=U_h-u$, we have
\[
II\leq\int_{|z|>0}  \left|W_h(x+z,t)+W_h(x-z,t)-2 W_h(x,t)\right| \dd \mu(z),
\]
Note that, by the regularity of $U_h$ and $u$, we have
\[
|W_h(x+z,t)+W_h(x+z,t)-2 W_h(x,t)| \leq \min\{|z|^a,1\}.
\]
Moreover, $W_h\to0$ as $h\to0$ pointwise. Thus, by \eqref{As:Le} and the dominated convergence theorem, $II\to0$ as $h\to0$.

The argument for the time fractional derivative follows in a similar way using Proposition \ref{prop:L1Truncation} (stated and proved later) and time regularity and we omit it.
\end{proof}


\section{Orders of convergence for classical solutions}\label{sec:Orders}
We now proceed to finding the estimates of the convergence error for the numerical scheme \eqref{eq:NumSch}. We want to stress that, in order to prove orders of convergence, we will only assume the regularity of solutions that we have proved in Corollary \ref{coro:compactnessscheme} together with the fact that viscosity solutions are, in some cases, classical solutions (Theorem \ref{thm:ConvergencePointwise}). In this section, we will always assume that $u$ is the classical solution of  \eqref{eq:mainPDE} given by Theorem \ref{thm:ConvergencePointwise}.

Let us define the error as the difference between the exact solution of \eqref{eq:main} and its numerical approximation \eqref{eq:NumSch}, that is, 
\[
	e^n(x) := u(x,t_n) - U^n(x).
\]
Then it is straightforward to show the following estimate.
\begin{lem}\label{lem:NumError}
Assume \eqref{As:wepos}, \eqref{As:webdd}, \eqref{As:constrong}, \eqref{As:Le} for some $r\in[0,1)$ and $h,\tau>0$. Let $u_0\in X^a_b(\R^d)$ for some $a\in(r,1]$, $D\in C_b([0,T]: X^r_b(\R^d))\cap C_b(\R^d: Lip([0,T]))$ nonnegative and $U^n$ satisfy \eqref{eq:NumSch}. 
Define the truncation errors by
\[
R^n_x(x) := \mathcal{L}^{\mu,\sigma} u(x,t_n) - L_h u(x,t_n), \quad \textup{and} \quad R^n_t(x) := \partial^\alpha_t u(x,t_n) - \partial^\alpha_\tau u(x,t_n),
\]
and assume that there exist positive numbers $e_x^n$ and $e_t^n$, independent on $n$, such that
\begin{equation}\label{eqn:ErrorsEF}
	\frac{\tau^\alpha}{\Gamma(\alpha)} \sum_{k=0}^{n-1} (n-k)^{\alpha-1} \|R_{x}^{k+1}\|_{L^\infty(\mathbb{R}^d)} \leq e_{x}^{n}, \quad \textup{and} \quad \frac{\tau^\alpha}{\Gamma(\alpha)} \sum_{k=0}^{n-1} (n-k)^{\alpha-1} \|R_{t}^{k+1}\|_{L^\infty(\mathbb{R}^d)} \leq e_{t}^{n}.
\end{equation} 
Then, we have
\begin{equation}\label{eq:NumError}
	\begin{split}
		\|e^n\|_{L^\infty(\mathbb{R}^d)} \leq C\left(e_{x}^{n} + e_{t}^{n}\right),
	\end{split}
\end{equation}
with $C=C(\alpha,r, T, \|D\|_{L^\infty(Q_T)}, \|D\|_{C_b([0,T]: X^r_b(\R^d))}, \|D\|_{C_b(\R^d: Lip([0,T]))}, \|U^0\|_{L^\infty(\R^d)},\|U^0\|_{X^a(\R^d)})>0$.
\end{lem}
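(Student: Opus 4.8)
The plan is to derive a discrete fractional evolution equation for the error $e^n$, express its solution via a discrete Gr\"onwall inequality, and then absorb the truncation contributions through the hypotheses \eqref{eqn:ErrorsEF}. First I would write the scheme \eqref{eq:NumSch} at $(x,t_n)$ and subtract it from the PDE \eqref{eq:mainPDE} evaluated at the same point. Using $\partial^\alpha_t u(x,t_n) = \partial^\alpha_\tau u(x,t_n) + R^n_t(x)$ and $\mathcal{L}^{\mu,\sigma}u(x,t_n) = L_h u(x,t_n) + R^n_x(x)$, together with the linearity of $\partial^\alpha_\tau$ and $L_h$, this gives
\[
\partial^\alpha_\tau e^n(x) = D^n(x) L_h e^n(x) + D^n(x) R^n_x(x) - R^n_t(x), \qquad e^0 \equiv 0.
\]

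Next I would run the same $\esssup$/$\essinf$ argument used in the proof of Lemma \ref{lem:decLh} and Theorem \ref{thm:equicont1}: taking a near-supremum sequence $x_\veps$ for $e^n$, the term $D^n(x_\veps)\sum_{\beta\neq0}(e^n(x_\veps)-e^n(x_\veps+z_\beta))\omega_\beta(h)$ has the favorable sign (because $D\geq0$ and the weights are nonnegative by \eqref{As:wepos}), so it can be dropped, and symmetrically for the infimum. This yields the scalar fractional differential inequality
\[
\partial^\alpha_\tau \|e^n\|_{L^\infty(\mathbb{R}^d)} \leq \|D\|_{L^\infty(Q_T)} \big( \|R^n_x\|_{L^\infty(\mathbb{R}^d)} + \|R^n_t\|_{L^\infty(\mathbb{R}^d)} \big) =: F^n,
\]
with zero initial value. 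Here I should be slightly careful that $\|e^n\|_{L^\infty}$ is the quantity to which $\partial^\alpha_\tau$ is applied — this is exactly the manipulation already carried out in \eqref{eq:CaputoInequality}, so it is legitimate.

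Then I would invoke the discrete Caputo Gr\"onwall inequality \eqref{eq:GrongCap1} (from Appendix \ref{app:Gronwall}) with $\lambda_0 = \lambda_1 = 0$ and forcing $F^n$; since there is no zeroth-order term, the conclusion reduces to the bare discrete fractional integration estimate, namely $\|e^n\|_{L^\infty(\mathbb{R}^d)} \leq C \sum_{k=0}^{n-1} w_{n-k}\|F^{k+1}\|$ where the weights $w_j$ are the discrete analogue of $\tau^\alpha j^{\alpha-1}/\Gamma(\alpha)$ (up to the usual comparison $b_{j-1}-b_j \asymp (1-\alpha) j^{-\alpha}$ and summation, which converts the L1 kernel into the Riemann–Liouville kernel). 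Matching this against the left-hand sides of \eqref{eqn:ErrorsEF}, which are precisely $\frac{\tau^\alpha}{\Gamma(\alpha)}\sum_{k=0}^{n-1}(n-k)^{\alpha-1}\|R^{k+1}_{x/t}\|_{L^\infty}$, and splitting $F^{k+1}$ into its two pieces, gives $\|e^n\|_{L^\infty(\mathbb{R}^d)} \leq C(e^n_x + e^n_t)$, with $C$ depending only on $\|D\|_{L^\infty(Q_T)}$, $\alpha$, $T$ and the Gr\"onwall constant — consistent with the stated dependence list.

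The main obstacle, and the only genuinely technical point, is the bookkeeping in the last step: verifying that the discrete convolution weights produced by the L1 Gr\"onwall inequality are dominated (uniformly in $n$ and $\tau$, for $t_n \leq T$) by the continuous Riemann–Liouville kernel $\tau^\alpha (n-k)^{\alpha-1}/\Gamma(\alpha)$ appearing in \eqref{eqn:ErrorsEF}. This is a standard comparison for the L1 coefficients — one uses that $b_{j-1}-b_j = (1-\alpha)\int_{j-1}^{j}\int_{s-1}^{s}\cdots$ type bounds, or more simply the known asymptotics $b_{j-1}-b_j \sim \alpha(1-\alpha)j^{-1-\alpha}$ and monotonicity — but it must be stated cleanly so that the constant $C$ is genuinely independent of $n$. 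Everything else (linearity, the sign argument, the reduction to a scalar inequality) is a direct transcription of techniques already deployed earlier in the paper.
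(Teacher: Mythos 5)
Your route coincides with the paper's: subtract \eqref{eq:NumSch} from the PDE to obtain a discrete fractional equation for $e^n$, run the $\esssup$/$\essinf$ argument of Lemma \ref{lem:decLh} to drop the $D^n L_h e^n$ term, and close with the Caputo Gr\"onwall inequality \eqref{eq:GrongCap1} with $\lambda_0=\lambda_1=0$ and $y^0=0$. The one place where your write-up wobbles is the step you single out as the ``main obstacle'': no comparison between the L1 weights $b_{j-1}-b_j$ and the Riemann--Liouville kernel is needed at this stage. Lemma \ref{lem:grondif} is packaged so that its forcing hypothesis \eqref{eqn:GronRLFBound} is \emph{literally} the quantity appearing in \eqref{eqn:ErrorsEF} (shift the index, $\sum_{k=0}^{n-1}(n-k)^{\alpha-1}\|R^{k+1}\| = \sum_{j=1}^{n}(n-j+1)^{\alpha-1}\|R^{j}\|$), so you simply take $F^n=\|R^n_x\|_{L^\infty}+\|R^n_t\|_{L^\infty}$, $F_1=e_x^n+e_t^n$, $F_2=0$ and read off $\|e^n\|_{L^\infty}\leq C(e_x^n+e_t^n)$ from \eqref{eq:GrongCap1}; the L1-versus-kernel bookkeeping you sketch (via asymptotics of $b_{j-1}-b_j$) is exactly what the appendix proof already does internally through \eqref{eqn:GronwallMainIneq} and the induction leading to \eqref{eqn:GronwallCaputoClaim}, so your proof is complete once you quote the lemma as stated rather than re-deriving its kernel structure.

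Two small corrections for precision. First, with $e^n=u(\cdot,t_n)-U^n$ the error equation is $\partial_\tau^\alpha e^n = D^n L_h e^n + D^n R^n_x - R^n_t$, so after the sup/inf argument the scalar inequality is $\partial_\tau^\alpha\|e^n\|_{L^\infty}\leq \|D\|_{L^\infty(Q_T)}\|R^n_x\|_{L^\infty}+\|R^n_t\|_{L^\infty}$; your bound $\|D\|_{L^\infty(Q_T)}\bigl(\|R^n_x\|_{L^\infty}+\|R^n_t\|_{L^\infty}\bigr)$ is not an upper bound when $\|D\|_{L^\infty}<1$ — replace the prefactor by $\max\{1,\|D\|_{L^\infty(Q_T)}\}$ (harmless, absorbed into $C$, consistent with the stated dependence). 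Second, to make the sup/inf step legitimate you should note, as in Theorem \ref{thm:ConvergencePointwise}, that $u(\cdot,t_n)\in X^a_b(\R^d)$ uniformly in $t_n$ so that $\Le^{\mu,\sigma}u$, $L_h u$ and hence $R^n_x$, $R^n_t$, $e^n$ are bounded functions; otherwise the manipulation with $\esssup$ sequences is not justified.
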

\begin{proof}
By plugging the definition of the error into the main equation \eqref{eq:main} we can show that
\[
\partial^\alpha_\tau e^n(x) = D^n(x) L_h e^n(x) + R^n_x(x) + R^n_t(x),
\]
where $R_{x}^n$ and $R^n_t$ are the truncation errors for space and time, respectively. By a similar $\esssup$ and $\essinf$ argument as in the proof of Lemma \ref{lem:decLh} we can show that
\begin{equation}\label{eq:ErrorEstimate}
	\partial^\alpha_\tau \|e^n\|_{L^\infty(\mathbb{R}^d)} \leq \|R^n_x\|_{L^\infty(\mathbb{R}^d)} + \|R^n_t\|_{L^\infty(\mathbb{R}^d)}.
\end{equation}
Application of the Gr\"onwall inequality \eqref{eq:GrongCap1} with $y^n=\|e^n\|_{L^\infty(\R^d)}$, $\lambda_{0}=\lambda_1 = 0$ and $F^n=\|R^n_x\|_{L^\infty(\mathbb{R}^d)} + \|R^n_t\|_{L^\infty(\mathbb{R}^d)}$ (i.e. $F_1 = e_{x}+e_t$, and $F_2 = 0$) yields the result since the numerical scheme and the equation have the same initial condition, that is, $y^0=\|u_0-u_0\|_{L^\infty(\R^d)}=0$. 
\end{proof}
As can be seen from the above result, the error of the numerical method decomposes in two terms: $e_x$ for space error and $e_t$ for the time error.  This errors are obtained as bounds of (discrete) fractional integrals of the truncation errors as seen in \eqref{eqn:ErrorsEF}. We will split our further discussion into two parts regarding time and space errors separately.

\subsection{Error in space}
The bound for the error in space is straightforward and follows from the assumption \eqref{As:constrong} and the proved regularity for $u$.

\begin{prop}
Let the assumptions and notations of Lemma \ref{lem:NumError} hold. Then, $e_x$ given in \eqref{eqn:ErrorsEF} can be chosen as follows:
\begin{equation}
	e_x^{n} = Ct_n^\alpha h^{a-r}, 
\end{equation}
where $C=C(\alpha, r, T, \|D\|_{L^\infty(Q_T)}, \|D\|_{C_b([0,T]: X^r_b(\R^d))}, \|D\|_{C_b(\R^d: Lip([0,T]))}, \|U^0\|_{L^\infty(\R^d)},\|U^0\|_{X^a(\R^d)})>0$. 
\end{prop}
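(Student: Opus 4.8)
The plan is to combine the space-consistency assumption \eqref{As:constrong} with the spatial regularity of the limit $u$ already established in Corollary \ref{coro:compactnessscheme}, and then to control the discrete fractional integral appearing in the definition \eqref{eqn:ErrorsEF} of $e_x^n$. Since under the standing hypotheses we are in the purely nonlocal regime ($r\in[0,1)$, no local part), the spatial truncation error is $R_x^{k+1}(x)=\Le^{\mu}u(x,t_{k+1})-L_h u(x,t_{k+1})$, and the whole argument is pointwise-in-time plus a summation.

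First I would record a uniform-in-time H\"older bound on the solution. By Corollary \ref{coro:compactnessscheme} (boundedness together with the space-continuity estimate, used here with $r\in(0,1)$ and $a\in(r,1]\subseteq(0,1]$), for every $t\in[0,T]$ we have $u(\cdot,t)\in X^a_b(\R^d)$ with $\|u(\cdot,t)\|_{X^a(\R^d)}\le C\max\{1,t^\alpha\}\le C(1+T^\alpha)$, the constant depending only on the quantities listed in the statement. Then, applying \eqref{As:constrong} with $\phi=u(\cdot,t_{k+1})$ — which is legitimate precisely because $a\in(r,1]$ and $r<1$ — gives, for every $0\le k\le n-1$,
\[
\|R_x^{k+1}\|_{L^\infty(\R^d)}=\|L_h u(\cdot,t_{k+1})-\Le^{\mu}u(\cdot,t_{k+1})\|_{L^\infty(\R^d)}\le \|u(\cdot,t_{k+1})\|_{X^a(\R^d)}\,O(h^{a-r})\le C\,h^{a-r},
\]
with $C$ independent of $k$ and $n$.

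Finally I would insert this uniform bound into \eqref{eqn:ErrorsEF} and estimate
\[
e_x^n=\frac{\tau^\alpha}{\Gamma(\alpha)}\sum_{k=0}^{n-1}(n-k)^{\alpha-1}\|R_x^{k+1}\|_{L^\infty(\R^d)}\le C\,h^{a-r}\,\frac{\tau^\alpha}{\Gamma(\alpha)}\sum_{k=0}^{n-1}(n-k)^{\alpha-1}.
\]
Since $s\mapsto(t_n-s)^{\alpha-1}$ is increasing on $[0,t_n)$ for $\alpha<1$, the discrete sum is dominated by $\tfrac1{\Gamma(\alpha)}\int_0^{t_n}(t_n-s)^{\alpha-1}\,\dd s=t_n^\alpha/\Gamma(\alpha+1)$, so $e_x^n\le C\,t_n^\alpha h^{a-r}$, which is the asserted bound. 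I do not expect any genuine obstacle here — the paper itself flags this estimate as straightforward — and the only points that deserve a line of care are (i) checking that the standing assumptions place us exactly in the range $r<1$, $a\in(r,1]$ where \eqref{As:constrong} is usable, and (ii) that the limit $u$ inherits the $X^a_b$ regularity uniformly in $t$, which is precisely what Corollary \ref{coro:compactnessscheme} supplies.
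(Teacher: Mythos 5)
Your proposal is correct and follows essentially the same route as the paper's proof: uniform-in-time $X^a_b$ regularity of $u$ from Corollary \ref{coro:compactnessscheme}, the consistency assumption \eqref{As:constrong} applied to $\phi=u(\cdot,t_{k+1})$ to get the uniform bound $\|R_x^{k+1}\|_{L^\infty(\R^d)}\le C h^{a-r}$, and then a comparison of the discrete fractional integral with $\int_0^{t_n}(t_n-s)^{\alpha-1}\,\dd s$ to produce the factor $t_n^\alpha$. The only cosmetic difference is that the paper reindexes the sum as $\sum_{k=1}^n k^{\alpha-1}$ and compares with $\int_0^n x^{\alpha-1}\,\dd x$, which is the same estimate in different variables.
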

\begin{proof}
First note that  $u\in X^a_b(\R^d)$ by Corollary \ref{coro:compactnessscheme}.  Thus, by \eqref{As:constrong}, we have
\[
	\|R^n_x\|_{L^\infty(\mathbb{R}^d)}=\|\mathcal{L}^{\mu,\sigma} u(x,t_n) - L_h u(x,t_n)\|_{L^\infty(\mathbb{R}^d)} =  \|u(\cdot,t_n)\|_{X^a_b(\R^d)} O(h^{a-r}) \leq C h^{a-r}.
\] 
From here, we can estimate $e_x$ in \eqref{eqn:ErrorsEF}. More precisely,
\begin{equation}
	\frac{\tau^\alpha}{\Gamma(\alpha)} \sum_{k=0}^{n-1} (n-k)^{\alpha-1} \|R^n_x\|_{L^\infty(\mathbb{R}^d)} \leq C h^{a-r} \frac{\tau^\alpha}{\Gamma(\alpha)} \sum_{k=1}^{n} k^{\alpha-1},
\end{equation}
where we have changed the order of summation. Now, bounding the sum by the integral we obtain
\begin{equation}
	\frac{\tau^\alpha}{\Gamma(\alpha)} \sum_{k=0}^{n-1} (n-k)^{\alpha-1} \|R^n_x\|_{L^\infty(\mathbb{R}^d)} \leq C h^{a-r} \frac{\tau^\alpha}{\Gamma(\alpha)} \int_0^n x^{\alpha-1} \dd x = \frac{C}{\Gamma(1+\alpha)} h^{a-r} \tau^{\alpha} n^\alpha.
\end{equation}
Using the fact that $(n\tau)^\alpha = t_n^\alpha$ ends the proof.
\end{proof}

\subsection{Error in time}
The truncation error for the L1 scheme for the Caputo derivative has previously been investigated in \cite{kopteva2019error, stynes2017error}. In these works, the authors assume certain estimates of regularity of the solution, in particular 
\[
|\partial^m_t u(x,t)| \leq C(1+t^{\alpha-m}) \quad \text{for} \quad m = 1,2.
\]
These, although natural for the subdiffusion, have to be \textit{assumed a priori}. On the other hand, in our above results we actually prove that the regularity estimate is always satisfied with $m=1$ (in a slightly weaker form). The following result gives the time truncation error for any function that satisfies the regularity estimate that we have obtained from our numerical scheme.

\begin{prop}\label{prop:L1Truncation}
Let $\alpha \in (0,1)$ and $\tau\in(0,1)$. Let also $y=y(t)$ be a continuous function satisfying 
\[|y(t)-y(s)| \leq \widetilde{C}(1+\max\{t,s\}^{\alpha-1})|t-s| \quad \textup{for all} \quad s,t\in[0,T].
\] 
for some $\widetilde{C}>0$. Then, we have
\begin{equation}
	|\partial^\alpha_t y(t_n) - \partial^\alpha_\tau y(t_n)| \leq C 
	\begin{cases}
		1, & n = 1, \\
		t_{n-1}^{\alpha-1}\tau^{1-\alpha}, & n > 1,
	\end{cases}
\end{equation}
where $t_n=\tau n \in [0,T]$ and $C=C(\alpha, \widetilde{C})$.
\end{prop}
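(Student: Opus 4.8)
The plan is to estimate directly the difference between the continuous Caputo derivative $\partial_t^\alpha y(t_n)$ and its L1 discretization $\partial_\tau^\alpha y(t_n)$, using only the H\"older-type bound on $y$. I would work from the integral representation \eqref{eq:capdef} of the Caputo derivative, since the L1 scheme is precisely a piecewise-linear interpolation of $y$ plugged into that formula. Specifically, if $\bar y_\tau$ denotes the piecewise-linear interpolant of $y$ on the grid $t_0,\dots,t_n$, then $\partial_\tau^\alpha y(t_n)$ equals the continuous Caputo derivative of $\bar y_\tau$ at $t_n$, so that
\[
\partial_t^\alpha y(t_n) - \partial_\tau^\alpha y(t_n) = \frac{1}{\Gamma(1-\alpha)}\frac{(y-\bar y_\tau)(t_n)-(y-\bar y_\tau)(0)}{t_n^\alpha} + \frac{\alpha}{\Gamma(1-\alpha)}\int_0^{t_n}\frac{(y-\bar y_\tau)(t_n)-(y-\bar y_\tau)(s)}{(t_n-s)^{1+\alpha}}\,\dd s.
\]
Since $y-\bar y_\tau$ vanishes at every grid point, the first term drops out entirely, and the problem reduces to bounding $\int_0^{t_n}\frac{|(y-\bar y_\tau)(s)|}{(t_n-s)^{1+\alpha}}\,\dd s$, i.e. one only needs a pointwise bound on the interpolation error $|y(s)-\bar y_\tau(s)|$ on each subinterval $[t_k,t_{k+1}]$ together with a careful handling of the weakly singular kernel near $s=t_n$.

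The key steps, in order, are: (1) On a generic subinterval $[t_k,t_{k+1}]$, bound $|y(s)-\bar y_\tau(s)|$ using the hypothesis $|y(t)-y(s)|\le\widetilde C(1+\max\{t,s\}^{\alpha-1})|t-s|$. Since $\bar y_\tau$ is the chord, $|y(s)-\bar y_\tau(s)|$ is controlled by the oscillation of $y$ on $[t_k,t_{k+1}]$, which is at most $\widetilde C(1+t_{k}^{\alpha-1})\tau$ for $k\ge 1$ and at most $\widetilde C\tau^\alpha$ for $k=0$ (because on $[0,\tau]$ the bound $|y(s)-y(0)|\le\widetilde C(1+s^{\alpha-1})s\le 2\widetilde C\,s^\alpha\le 2\widetilde C\tau^\alpha$ using $s\le\tau\le1$). (2) Split the integral $\int_0^{t_n}$ into the sum over $k=0,\dots,n-1$ of $\int_{t_k}^{t_{k+1}}$, insert the per-interval bounds, and compute/estimate $\int_{t_k}^{t_{k+1}}(t_n-s)^{-1-\alpha}\dd s$. (3) Treat the last interval $[t_{n-1},t_n]$ separately since there the kernel $(t_n-s)^{-1-\alpha}$ is not integrable against a constant in a scale-invariant way; here I would instead use that $|y(s)-\bar y_\tau(s)|\le\widetilde C(1+t_{n-1}^{\alpha-1})(t_n-s)$ vanishes linearly as $s\to t_n$ (bounding the interpolation error by the distance to the nearer endpoint times the local oscillation), so $\int_{t_{n-1}}^{t_n}(t_n-s)\cdot(t_n-s)^{-1-\alpha}\dd s = \int_0^\tau r^{-\alpha}\dd r = \tau^{1-\alpha}/(1-\alpha)$, which produces the claimed $t_{n-1}^{\alpha-1}\tau^{1-\alpha}$ factor. (4) Sum the contributions of the interior intervals and show the total is also $O(t_{n-1}^{\alpha-1}\tau^{1-\alpha})$ for $n>1$, and separately check the $n=1$ case where only the $[0,\tau]$ interval is present and the bound $|y-\bar y_\tau|\le 2\widetilde C\tau^\alpha$ combined with $\int_0^\tau(\tau-s)^{-1-\alpha}\dd s$ — which diverges — forces the more delicate linear-vanishing estimate on $[0,\tau]$ too, yielding the $O(1)$ bound.

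The main obstacle I anticipate is the careful bookkeeping of the interior sum in step (4): one must show $\sum_{k=1}^{n-2}(1+t_k^{\alpha-1})\tau\int_{t_k}^{t_{k+1}}(t_n-s)^{-1-\alpha}\dd s \lesssim t_{n-1}^{\alpha-1}\tau^{1-\alpha}$, and this requires comparing the discrete sum with the integral $\int_0^{t_{n-1}}(1+s^{\alpha-1})(t_n-s)^{-1-\alpha}\dd s$ and then estimating that Beta-type integral by splitting at $t_n/2$. On $[0,t_n/2]$ one uses $(t_n-s)^{-1-\alpha}\lesssim t_n^{-1-\alpha}$ and $\int_0^{t_n/2}s^{\alpha-1}\dd s\lesssim t_n^\alpha$, giving $t_n^{-1}$ which, multiplied by the leftover $\tau$, is dominated by $t_{n-1}^{\alpha-1}\tau^{1-\alpha}$ since $t_n^{-1}\tau = (\tau/t_n)\le (\tau/t_n)^{1-\alpha}\cdot\text{(stuff)}$ — here one has to be slightly careful and may prefer to keep $1+t_k^{\alpha-1}$ bounded below by a constant only when $t_k\ge 1$. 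On $[t_n/2,t_{n-1}]$ one uses $s^{\alpha-1}\lesssim t_n^{\alpha-1}$ and $\int_{t_n/2}^{t_{n-1}}(t_n-s)^{-1-\alpha}\dd s\lesssim\tau^{-\alpha}$, which directly gives $t_n^{\alpha-1}\tau^{-\alpha}\cdot\tau = t_n^{\alpha-1}\tau^{1-\alpha}\asymp t_{n-1}^{\alpha-1}\tau^{1-\alpha}$ using $t_{n-1}\asymp t_n$ for $n\ge2$. Once these two regimes are handled, the pieces combine to the stated bound, and I would fold all absolute constants and $\Gamma$-factors into the final $C=C(\alpha,\widetilde C)$.
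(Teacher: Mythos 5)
Your proposal is correct and follows essentially the same route as the paper's proof: you identify the L1 derivative with the Caputo derivative of the piecewise-linear interpolant, reduce the error to the weakly singular integral of the interpolation error, use the hypothesis to get per-interval bounds with linear vanishing of the error near $s=t_n$ (and on $[0,\tau]$ when $n=1$), and control the interior contribution by the same Beta-type integral split at $t_n/2$ that the paper performs via the substitution $s=t_n\rho$. The only differences are cosmetic (oscillation bounds with $t_k^{\alpha-1}$ versus the paper's single product-form bound carrying $s^{\alpha-1}$), and your anticipated bookkeeping indeed closes as you describe.
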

\begin{proof}
Let $I_\tau [y]$ be the piecewise linear interpolation function of $y$ with nodes $t_k=\tau k$ for $k=0,...,n$. We recall that the $L^1$ discretization $\partial_{\tau}^\alpha$ of $\partial_{t}^\alpha$ is equivalently defined by
\[
\partial_{\tau}^\alpha y(t_n)= \partial_{t}^\alpha (I_\tau[y])(t_n).
\]
Then, 
\[
r^n_t:= \partial^\alpha_t y(t_n) - \partial^\alpha_\tau y(t_n) =  \partial^\alpha_t y(t_n) - \partial_{t}^\alpha (I_\tau[y])(t_n).
\]
We use the definition of the Caputo derivative given in \eqref{eq:capdef}  to get 
\[
\begin{split}
	r^n_t 
	&= \frac{1}{\Gamma(1-\alpha)}\left( \frac{y(t_n)-y(0)-I_\tau [y](t_n)+I_\tau [y](0)}{t^{\alpha}}+ \alpha \int_0^{t_n} \frac{y(t_n)-y(s)-I_\tau [y](t_n) + I_\tau [y](s)}{ (t_n-s)^{\alpha+1}}\dd s\right)\\
	&=  \frac{\alpha}{\Gamma(1-\alpha)} \int_0^{t_n} (t_n-s)^{-\alpha-1} (I_\tau [y](s)-y(s))\dd s,
\end{split}
\] 
where we have used that $I_\tau [y](t_k) = y(t_k)$ for all $k=0,...,n$. From here we proceed in several steps.

\textbf{Step 1:} We estimate first the interpolation error. Note that we cannot use the classical second order error estimate since it requires twice-differentiability of $y$, which is not true here. The error at $s=t_k$ for $k=0,\ldots,n$ is zero since the interpolation is exact there. Now let $s\in (t_k, t_{k+1})$ for some $k=0,\ldots,n$. Then
\[
\begin{split}
	|I_\tau [y](s)-y(s)| 
	&= \left| \frac{s-t_k}{\tau} y(t_{k+1}) + \frac{t_{k+1}-s}{\tau} y(t_k) - y(s)\right| \\
	&\leq \frac{s-t_k}{\tau} |y(t_{k+1}) - y(s)| + \frac{t_{k+1}-s}{\tau} |y(s) - y(t_k)|,
\end{split}
\]
since the coefficients add up to $1$. Next, using our regularity assumption, we have 
\begin{equation}\label{eqn:InterpolationError}
	\begin{split}
		|I_\tau y(s)-y(s)| 
		&\leq \widetilde{C}\left(\frac{(s-t_k)(t_{k+1}-s)}{\tau} (1+ t_{k+1}^{\alpha-1}) +  \frac{(s-t_k)(t_{k+1}-s)}{\tau} (1 + s^{\alpha-1})\right) \\
		&\leq 2\widetilde{C} \frac{(s-t_k)(t_{k+1}-s)}{\tau} (1+s^{\alpha-1}),
	\end{split}
\end{equation}
where the last inequality follows from the fact that $t_{k+1} \geq s$ and $\alpha-1< 0$. 

\textbf{Step 2:} We estimate now $r_t^n$ when $n=1$. Using (\ref{eqn:InterpolationError}) we have, for $\tau<1$, that
\[
\begin{split}
|r^1_t| &\leq \frac{2\widetilde{C}\alpha}{\Gamma(1-\alpha)} \tau^{-1} \int_0^\tau (\tau-s)^{-\alpha-1} s(\tau-s) (1+s^{\alpha-1}) \dd s\\
&\leq \frac{2\widetilde{C}\alpha}{\Gamma(1-\alpha)} \frac{\tau + \tau^\alpha}{\tau} \int_0^\tau (\tau-s)^{-\alpha} \dd s\\
& \leq  \frac{2\widetilde{C}\alpha}{\Gamma(2-\alpha)} (\tau^{1-\alpha} + 1) \leq \frac{4\widetilde{C}\alpha}{\Gamma(2-\alpha)},
\end{split}
\]
where we have used $\tau^{1-\alpha} \leq 1$.
	
\textbf{Step 3: }Now we prove the result for $n>1$. The integral in the remainder can be split onto each subinterval as follows: 
\begin{equation}\label{eqn:TruncationRemainder}
	|r^n_t|\leq \frac{\alpha}{\Gamma(1-\alpha)} \sum_{k=0}^{n-1}\mathcal{I}_k \quad \textup{with} \quad  \mathcal{I}_k= \int_{t_k}^{t_{k+1}} (t_n-s)^{-\alpha-1} |I_\tau [y](s) - y(s)| \dd s. 
\end{equation}
The estimates for $k=0$ and $k=n-1$ have to be done separately using (\ref{eqn:InterpolationError}). Hence, by a similar reasoning as above,
\begin{equation}\label{eqn:Truncation(k=0)}
	\begin{split}
		\mathcal{I}_0&\leq 2\widetilde{C} \tau^{-1} \int_0^\tau (t_n-s)^{-\alpha-1} (\tau-s) (s+s^{\alpha}) \dd s \\
		&\leq  4\widetilde{C} (t_{n}-\tau)^{-\alpha-1}\int_0^\tau s^\alpha \dd s\\
		& = \frac{4\widetilde{C}}{(1+\alpha)} \frac{1}{(n-1)^{1+\alpha}}\leq  \frac{4\widetilde{C}}{(1+\alpha)} (n-1)^{\alpha-1} =  \frac{4\widetilde{C}}{(1+\alpha)} t_{n-1}^{\alpha-1} \tau^{1-\alpha},
	\end{split}
\end{equation}
and
\begin{equation}\label{eqn:Truncation(k=n-1)}
	\begin{split}
		\mathcal{I}_{n-1} &\leq2 \widetilde{C} \tau^{-1}\int_{t_{n-1}}^{t_{n}} (t_n-s)^{-\alpha} (s-t_{n-1}) (1+s^{\alpha-1}) \dd s \\
		&\leq 2\widetilde{C} (1+t_{n-1}^{\alpha-1}) \int_{t_{n-1}}^{t_{n}} (t_n-s)^{-\alpha}\dd s \\
		&= \frac{2\widetilde{C}}{1-\alpha}  (1+t_{n-1}^{\alpha-1})\tau^{1-\alpha} \leq \frac{4 T^{1-\alpha}\widetilde{C}}{1-\alpha}t_{n-1}^{\alpha-1}\tau^{1-\alpha}.
	\end{split}
\end{equation}
Thus, we are left with estimating all remaining terms $k=1,...,n-2$ with $n>2$. To this end, we can write
\[
\begin{split}
	\sum_{k=1}^{n-2}  \mathcal{I}_k  &\leq 2 \widetilde{C} \tau \sum_{k=1}^{n-2}  \int_{t_k}^{t_{k+1}} (t_n-s)^{-\alpha-1} (1+s^{\alpha-1}) \dd s\\
	& = 2 \widetilde{C} \tau \int_{\tau}^{t_{n-1}} (t_n-s)^{-\alpha-1} \dd s + 2 \widetilde{C} \tau \int_{\tau}^{t_{n-1}} (t_n-s)^{-\alpha-1} s^{\alpha-1}ds  =  2\tilde{C} (S_1+S_2),
\end{split}
\]
where we have estimated $(s-t_k)(t_{k+1}-s) \leq \tau^2$. For the first term, we solve the integral explicitly and estimate as follows:
\[
S_1=\tau \int_{\tau}^{t_{n-1}} (t_n-s)^{-\alpha-1} \dd s \leq  \frac{\tau}{\alpha (t_{n}-t_{n-1})^\alpha} = \frac{\tau^{1-\alpha} }{\alpha} t_{n-1}^{\alpha-1} t_{n-1}^{1-\alpha} \leq \frac{T^{1-\alpha}}{\alpha} t_{n-1}^{\alpha-1} \tau^{1-\alpha}  .
\]
To estimate $S_2$, we perform the change of variables $s=t_n \rho$, and note that   for $n\geq2$, we have that $1/n \leq 1/2 \leq (n-1)/n$. Thus
\[
\begin{split}
S_2&= \frac{1}{n} \int_{\frac{1}{n}}^{\frac{n-1}{n}} (1-\rho)^{-\alpha-1} \rho^{\alpha-1} \dd \rho\\
& =  \frac{1}{n} \int_{\frac{1}{n}}^{\frac{1}{2}} (1-\rho)^{-\alpha-1} \rho^{\alpha-1} \dd\rho  +  \frac{1}{n} \int_{\frac{1}{2}}^{\frac{n-1}{n}} (1-\rho)^{-\alpha-1} \rho^{\alpha-1} \dd\rho\\
&\leq \frac{2^{\alpha+1}}{n} \int_{\frac{1}{n}}^{\frac{1}{2}} \rho^{\alpha-1} \dd\rho  +  \frac{2^{1-\alpha}}{n}  \int_{\frac{1}{2}}^{\frac{n-1}{n}} (1-\rho)^{-\alpha-1} \dd\rho\\
&\leq  \frac{C_1}{n} + \frac{C_2}{n} \left(1- \frac{n-1}{n}\right)^{-\alpha}.
\end{split}
\]
From here, it is easy to see that $S_2 \leq C  t_{n-1}^{\alpha-1}  \tau^{1-\alpha}$. This concludes the proof.
\end{proof}

Having the bound for the truncation error in time  we can give a bound for the error in time of the numerical scheme.
\begin{cor}\label{cor:TimeOrder}
	Let the assumptions and notations of Lemma \ref{lem:NumError} hold. Then, $e_t^n$ given in \eqref{eqn:ErrorsEF} can be chosen as follows:
\begin{equation}
	e_t^n = C t_n^{2\alpha-1} \tau^{1-\alpha},
\end{equation} 
where $C=C(\alpha, r, T, \|D\|_{L^\infty(Q_T)}, \|D\|_{C_b([0,T]: X^r_b(\R^d))}, \|D\|_{C_b(\R^d: Lip([0,T]))}, \|U^0\|_{L^\infty(\R^d)},\|U^0\|_{X^a(\R^d)})>0$.
\end{cor}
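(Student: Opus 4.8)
The plan is to feed the pointwise time--truncation estimate of Proposition~\ref{prop:L1Truncation} into the defining inequality \eqref{eqn:ErrorsEF} for $e_t^n$ and then bound the resulting discrete fractional integral of the truncation errors. First I would verify that, for each fixed $x\in\R^d$, the function $t\mapsto u(x,t)$ is admissible in Proposition~\ref{prop:L1Truncation}. Since $u_0\in X^a_b(\R^d)$ with $a\in(r,1]$, boundedness forces $u_0\in X^r_b(\R^d)$ as well (for $|y|\le 1$ use $|y|^a\le|y|^r$, for $|y|\ge1$ use $\|u_0\|_{L^\infty(\R^d)}$), so Corollary~\ref{coro:compactnessscheme} applies with Hölder exponent $r$ in place of $a$, and its time--continuity estimate \eqref{equi:est2contlimit} becomes
\[
\|u(\cdot,\tilde t)-u(\cdot,t)\|_{L^\infty(\R^d)}\le C\,(\tilde t^{\,\alpha-1}+1)(\tilde t-t),\qquad 0\le t<\tilde t\le T,
\]
which is exactly the hypothesis of Proposition~\ref{prop:L1Truncation}, with a constant $\widetilde C$ of the type allowed in the statement of Corollary~\ref{cor:TimeOrder}. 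Consequently, with $R^n_t(x)=\partial^\alpha_t u(x,t_n)-\partial^\alpha_\tau u(x,t_n)$,
\[
\|R^n_t\|_{L^\infty(\R^d)}\le C\begin{cases}1, & n=1,\\ t_{n-1}^{\alpha-1}\tau^{1-\alpha}, & n>1.\end{cases}
\]

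Next I would substitute this into $e_t^n:=\frac{\tau^\alpha}{\Gamma(\alpha)}\sum_{k=0}^{n-1}(n-k)^{\alpha-1}\|R^{k+1}_t\|_{L^\infty(\R^d)}$. For $n=1$ this is immediately $\le C\tau^\alpha=C\,t_1^{2\alpha-1}\tau^{1-\alpha}$. For $n\ge2$, isolating the $k=0$ term (which contributes $\le Cn^{\alpha-1}\tau^\alpha/\Gamma(\alpha)$) and using $t_{j-1}^{\alpha-1}\tau^{1-\alpha}=(j-1)^{\alpha-1}$ for the terms with $j=k+1\ge2$, the reindexing $i=j-1$ leaves
\[
e_t^n\le \frac{C\tau^\alpha}{\Gamma(\alpha)}\Big(n^{\alpha-1}+\sum_{i=1}^{n-1}(n-i)^{\alpha-1}i^{\alpha-1}\Big).
\]
Since $n^{\alpha-1}\le n^{2\alpha-1}$ for $n\ge1$, it suffices to show $\sum_{i=1}^{n-1}(n-i)^{\alpha-1}i^{\alpha-1}\le C_\alpha n^{2\alpha-1}$. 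This I would do by splitting the sum at $i=\lfloor n/2\rfloor$: for $i\le n/2$ one has $n-i\ge n/2$, hence $(n-i)^{\alpha-1}\le (n/2)^{\alpha-1}$, while $\sum_{i=1}^{\lfloor n/2\rfloor} i^{\alpha-1}\le C_\alpha n^\alpha$ by comparison with $\int_0^{n}s^{\alpha-1}\,\dd s$ (finite since $\alpha>0$); the part $i>n/2$ is symmetric under $i\leftrightarrow n-i$. Multiplying out gives $C_\alpha n^{2\alpha-1}$, whence $e_t^n\le C\tau^\alpha n^{2\alpha-1}=C\,t_n^{2\alpha-1}\tau^{1-\alpha}$, with $C$ depending only on $\alpha$, $r$, $T$, the three norms of $D$ and $\|U^0\|_{L^\infty(\R^d)},\|U^0\|_{X^a(\R^d)}$, as required.

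The only genuinely non-bookkeeping step is the uniform-in-$n$ bound on the discrete convolution $\sum_{i=1}^{n-1}(n-i)^{\alpha-1}i^{\alpha-1}$ --- the discrete analogue of the Beta integral $\int_0^1(1-\rho)^{\alpha-1}\rho^{\alpha-1}\,\dd\rho$ --- which is where the exponent $2\alpha-1$ is produced; the dyadic splitting above makes it elementary. A minor point to treat separately is the degenerate case $r=0$: there \eqref{As:webdd} holds with \emph{any} exponent in $(0,a)$, so one simply runs the admissibility argument of the first paragraph (and the consistency estimate \eqref{As:constrong}) with such an exponent in place of $r$, and the rest is unchanged.
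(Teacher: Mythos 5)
Your proposal is correct and follows essentially the same route as the paper: downgrade the H\"older exponent of $u_0$ from $a$ to $r$ so that the time-regularity estimate of Corollary \ref{coro:compactnessscheme} gives exactly the hypothesis of Proposition \ref{prop:L1Truncation}, insert the resulting truncation bound into \eqref{eqn:ErrorsEF}, and bound the discrete convolution $\sum_{k=1}^{n-1}(n-k)^{\alpha-1}k^{\alpha-1}$ by $C_\alpha n^{2\alpha-1}$. The only (harmless) differences are that you justify the convolution bound by a dyadic splitting where the paper invokes boundedness of the Riemann sums of the Beta integral $B(\alpha,\alpha)$, and that you make explicit the $u_0\in X^a_b\subseteq X^r_b$ step that the paper leaves implicit.
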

\begin{proof}
Note that, by Corollary \ref{coro:compactnessscheme}, we have that 
\[
	\|u(\cdot,t) - u(\cdot,s)\|_{L^\infty(\R^d)} \leq  \tilde{C} (1+\max\{t,s\}^{\alpha-1})|t-s |.
\]
Thus, by Proposition \ref{prop:L1Truncation}, we have
\begin{equation}
	\|R_t^n\|_{L^\infty(\R^d)}=\|\partial^\alpha_t u(\cdot,t_n) - \partial^\alpha_\tau u(\cdot,t_n)\|_{L^\infty(\R^d)} \leq C 
	\begin{cases}
		1, & n = 1, \\
		t_{n-1}^{\alpha-1}\tau^{1-\alpha}, & n > 1.
	\end{cases}
\end{equation}
Coming back to \eqref{eqn:ErrorsEF} we can then estimate
\begin{equation}
	\begin{split}
		\frac{\tau^\alpha}{\Gamma(\alpha)}\sum_{k=0}^{n-1} &(n-k)^{\alpha-1} \|R^{k+1}_t\|_{L^\infty(\mathbb{R}^d)} \leq C\tau^\alpha \left(n^{\alpha-1} + \sum_{k=1}^{n-1} (n-k)^{\alpha-1} k^{\alpha-1}\right) \\
		&= C n^{2\alpha-1} \tau^\alpha\left(\frac{1}{n^\alpha}+\frac{1}{n}\sum_{k=1}^{n-1} \left(1-\frac{k}{n}\right)^{\alpha-1} \left(\frac{k}{n}\right)^{\alpha-1}\right). 
	\end{split}
\end{equation}
The sum above is the Riemann sum for the beta function $B(\alpha,\alpha)$, and therefore, the parentheses are bounded with respect to $n$. Renaming the constant leads to
\begin{equation}
	\frac{\tau^\alpha}{\Gamma(\alpha)}\sum_{k=0}^{n-1} (n-k)^{\alpha-1} \|R^{k+1}_t\|_{L^\infty(\mathbb{R}^d)} \leq C n^{2\alpha-1}\tau^\alpha = C t_n^{2\alpha-1} \tau^{1-\alpha}, 
\end{equation}
what concludes the proof.
\end{proof}
Therefore, the error in time depends on the range of $\alpha$ and is different for small and large times. More specifically, when $\alpha \in [1/2, 1)$ the order is globally in time equal to $1-\alpha$ since $t_n^{2\alpha-1}\tau^{1-\alpha} \leq T^{2\alpha-1}\tau^{1-\alpha}$. When $\alpha\in (0,1/2)$ the scheme is again accurate with order $1-\alpha$ but only away from $t=0$. Near the origin we have the order deteriorates to $\alpha$ as the first step yields $t_1^{2\alpha-1} \tau^{1-\alpha} = \tau^\alpha$. This loss of accuracy near $t=0$ is consistent with the literature findings in the case of local in space linear subdiffusion equation \cite{stynes2017error}. However, in contrast to these results, we do not require any a priori regularity assumptions. 

\section{Numerical experiments}	
In this section we present several numerical verifications of the devised numerical scheme \eqref{eq:NumSch}. Our implementation has been done in the Julia programming language with the use of multithreading. For example, the assembly of the spatial operator matrix $L_h$ can be performed in parallel, as well as various error computations. Where necessary, all integrals have been calculated using Gaussian quadrature. For illustration purposes, we restrict our attention to one spatial dimension. In each example concerning equations in the whole space we have truncated the domain to $[-X, X]$ with $X>0$ chosen sufficiently large for the truncation error to be negligible for our investigations of the discretization error. 

\subsection{Numerical experiments with exact solutions}
In some rare cases, an exact solution to the main problem \eqref{eq:main} can be found, and the numerical solution can be compared with it. For this purpose let us consider the space-time fractional diffusion equation
\begin{equation}\label{eq:FracDiff}
\partial^\alpha_t u + D(x,t)(-\Delta)^s u = 0, \quad x\in\mathbb{R}, \quad t\in(0,T), \quad \alpha\in(0,1), \quad s\in(0,1),
\end{equation}
where the fractional Laplacian is given by 
\begin{equation}
-(-\Delta)^s u(x) = k_s \int_\mathbb{R} \frac{u(x+z) + u(x-z) - 2 u(x)}{|x-z|^{1+2s}} \dd z,
\end{equation}
where $k_s$ is a known constant. For the discretization of the fractional Laplacian \eqref{eq:SpaceOperatorDiscrete}, we use the weights originating from taking powers of the discrete Laplacian (see \cite{ciaurri2015connection}) and have been proved to be of second order accuracy in \cite{dTEnJa18}. The precise expression of the weights is the following: 
\begin{equation}\label{eq:LaplacianWeights}
\omega_k(h) = \frac{(-1)^{k+1}}{2\pi h^{2s}} \int_0^{2\pi} \left(4\left(\cos\frac{x}{2}\right)^2\right)^s e^{i k x} \dd x = \frac{1}{h^{2s}} \frac{2^{2s}\Gamma(\frac{1}{2}+s)\Gamma(k-s)}{\sqrt{\pi}|\Gamma(-s)|\Gamma(k+1+s)}.
\end{equation}
When implementing the above formula, we have tested two methods. One is the evaluation of the Fourier integral by means of Gaussian quadrature, while the other is done by using the exact expression in terms of gamma functions. In the latter case, one must keep in mind to use the asymptotic form of the gamma function for large values of $k$. These two approaches gave satisfactory results when implemented with the use of parallelism. 

\subsubsection{Local-in-time problem with constant diffusivity}
Consider the fractional diffusion with constant diffusivity, say $D(x,t) = 1$. In that case, the exact solution is (\cite[formulas (3.2) and (3.19)]{mainardi2007fundamental}) 
\begin{equation}\label{eq:ExactSuperDiff}
u(x,t) = \int_\mathbb{R} G(x-z, t) u_0(z) \dd z,
\end{equation}
where the Green's function is given by
\begin{equation}\label{eq:ExactSuperDiffGreen}
G(x,t) = (2\pi t^{\frac{\alpha}{2s}})^{-1} \int_\mathbb{R} E_\alpha(-|\xi|^{2s}) e^{-i \xi x t^{-\frac{\alpha}{2s}}} \dd\xi.  
\end{equation}
For the local-in-time diffusion, that is $\alpha = 1$, the convolution can be evaluated explicitly when $u_0(x) = G(x,1)$ yielding
\begin{equation}\label{eq:FracDiffExactSuperDiff}
u(x,t) = G(x,t+1). 
\end{equation}
The results of the calculations are presented in Fig. \ref{fig:ErrorXExactSuperDiff}. The time discretization parameter has had to be chosen sufficiently small in order to extract the information about the sole influence of space discretization. We have taken $\tau = 2^{-12}$ with $T=1$, but other choices were also explored. As can be seen, numerical results are consistent with the second-order accuracy in space of the chosen discretization. Therefore, we can conclude that, due to high regularity of the solution, the scheme's empirical order reproduces the truncation error estimate. 

\begin{figure}[h!]
\centering
\includegraphics[scale = 0.6]{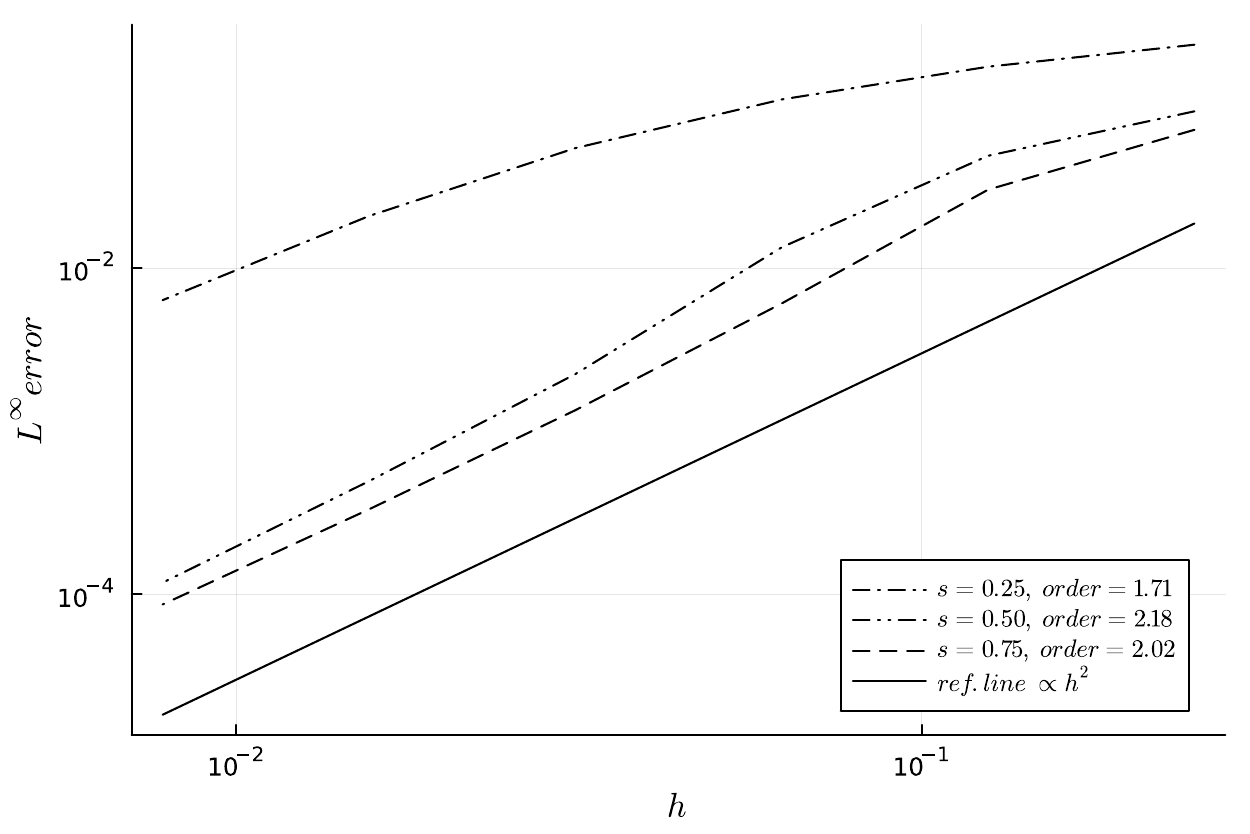}
\caption{Maximum in space and time error between numerical and analytical \eqref{eq:FracDiffExactSuperDiff} solution of the fractional diffusion equation \eqref{eq:FracDiff} with $D(x,t) = 1$ and $\alpha = 1$  computed for different discretization parameters $h$ with fixed value of $\tau=2^{-12}$.  }
\label{fig:ErrorXExactSuperDiff}
\end{figure}

\subsubsection{Fractional in space and time periodic solution}
To further investigate the performance of our scheme for $0<\alpha<1$ we can go back to an  exact solution \eqref{eq:ExactSuperDiff} and observe the following result concerning one of the eigenfunctions of the fractional Laplacian (see also \cite{barrios2019periodic}).

\begin{prop}
Let $f(x) = \sin(x)$. Then, for any $s\in [0,1]$ we have $(-\Delta )^s f(x) = f(x)$. 
\end{prop}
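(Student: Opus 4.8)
The plan is to reduce everything to the singular integral representation of $(-\Delta)^s$ recorded just above and to a single elementary trigonometric identity. Since $f(x)=\sin x$ is bounded and smooth, for $s\in(0,1)$ the integral
\[
-(-\Delta)^s f(x) = k_s \int_{\mathbb{R}} \frac{f(x+z)+f(x-z)-2f(x)}{|z|^{1+2s}}\,\dd z
\]
is absolutely convergent once one observes (see below) that after symmetrization the numerator is $O(z^2)$ near $z=0$, so no principal value is needed. The eigenvalue we expect to read off is $|\xi|^{2s}$ evaluated at $\xi=1$, i.e.\ $1$, which is exactly the claim.

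First I would apply the product-to-sum formula $\sin(x+z)+\sin(x-z)=2\sin(x)\cos z$, which turns the numerator into $2\sin(x)\,(\cos z-1)$. Pulling the $x$-dependent factor out of the integral gives
\[
-(-\Delta)^s f(x) = -2\sin(x)\,k_s \int_{\mathbb{R}} \frac{1-\cos z}{|z|^{1+2s}}\,\dd z = -\lambda_s \sin(x),
\]
where $\lambda_s := 2k_s \int_{\mathbb{R}} (1-\cos z)\,|z|^{-1-2s}\,\dd z$. The key point is that $k_s$ is precisely the normalization constant fixing the Fourier symbol of $(-\Delta)^s$ to be $|\xi|^{2s}$; equivalently it is chosen so that $\lambda_s=1$ for every $s\in(0,1)$, whence $(-\Delta)^s f = f$. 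Near $z=0$ one has $1-\cos z=\tfrac{z^2}{2}+O(z^4)$, so the integrand behaves like $|z|^{1-2s}$ and is integrable for $s<1$, which both legitimizes the manipulation and shows $\lambda_s<\infty$; near infinity the numerator is bounded, so convergence there is immediate.

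Finally I would dispatch the two endpoints separately, since the singular integral form degenerates there: for $s=0$ the operator is the identity and the claim is trivial, while for $s=1$ one has $(-\Delta)^1=-\partial_x^2$ and $-\partial_x^2\sin x=\sin x$ by direct differentiation. (Alternatively, a one-line distributional argument covers all $s\in[0,1]$ at once, since $\widehat{\sin}$ is supported on $\{\pm1\}$ and $(-\Delta)^s$ multiplies by $|\xi|^{2s}$, which equals $1$ there; I would keep the singular-integral computation as the main argument to stay within the elementary framework of the paper.) The only genuine subtlety — and the step I would be most careful about — is matching the paper's normalization of $k_s$ with the value $\big(\int_{\mathbb{R}}(1-\cos z)\,|z|^{-1-2s}\,\dd z\big)^{-1}$ up to the factor $2$ arising from the symmetric second-difference form of the integrand; everything else is a routine computation.
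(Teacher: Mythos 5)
Your argument is correct and lands on the same basic mechanism as the paper's proof: reduce to a trigonometric identity that factors out $\sin x$, and identify the remaining integral with the reciprocal of the normalization constant of the fractional Laplacian. The difference is in the representation you start from: the paper uses the principal-value form $(-\Delta)^s f(x)=c_s\,\mathrm{PV}\int (f(x)-f(y))|x-y|^{-1-2s}\,\dd y$ with $c_s^{-1}=\int_{|z|>0}(1-\cos z)|z|^{-1-2s}\dd z$, applies $\sin x-\sin y=2\sin\tfrac{x-y}{2}\cos\tfrac{x+y}{2}$, and must then discard an odd integrand via the principal value before recognizing $\int\sin^2(z/2)|z|^{-1-2s}\dd z=\tfrac12\int(1-\cos z)|z|^{-1-2s}\dd z$. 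You instead work with the symmetric second-difference form and the identity $\sin(x+z)+\sin(x-z)=2\sin x\cos z$, so the numerator factorizes immediately, no principal value or odd-part cancellation is needed, and absolute convergence is clear from $1-\cos z=O(z^2)$. What your route buys is a shorter computation; what it costs is exactly the point you flag: the constant $k_s$ in the second-difference form is $c_s/2$ (the paper only calls $k_s$ ``a known constant''), so the factor $2$ in your $\lambda_s=2k_s\int(1-\cos z)|z|^{-1-2s}\dd z$ is what makes $\lambda_s=1$; both proofs ultimately take the Fourier-symbol normalization of the constant as a known fact rather than verifying it. Your separate treatment of the endpoints $s=0$ and $s=1$ (and the alternative one-line Fourier argument) is a small completeness bonus over the paper, which implicitly works with $s\in(0,1)$.
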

\begin{proof}
Notice that the fractional Laplacian can be written as a principal value integral
\begin{equation}
	(-\Delta)^s f(x) = c_s  \int_{|x-y|>0} \frac{f(x)-f(y)}{|x-y|^{1+2s}} \dd y,
\end{equation}
where $c_s^{-1} = \int_{|z|>0} (1-\cos z)|z|^{-(1+2s)} \dd z$. Therefore, by elementary trigonometric formulas
\begin{equation}
\begin{split}
	(-\Delta)^s f(x) &= c_s \int_{|x-y|>0} \frac{\sin x-\sin y}{|x-y|^{1+2s}} \dd y = 2c_s \int_{|x-y|>0} \frac{\sin \frac{x-y}{2}\cos \frac{x+y}{2}}{|x-y|^{1+2s}} \dd y \\ 
	&= 2c_s \int_{|z|>0} \frac{\sin\left( -\frac{z}{2}\right)\cos\left(x+\frac{z}{2}\right)}{|z|^{1+2s}} \dd z = 2c_s \int_{|z|>0} \frac{\sin\left( -\frac{z}{2}\right)\left(\cos x\cos\frac{z}{2}-\sin x\sin\frac{z}{2}\right)}{|z|^{1+2s}} \dd z.
\end{split}
\end{equation}
Now, we can split the integral in two parts and notice that the one multiplying $\cos x$ has an odd integrand and vanishes due to the principal value. Hence,
\begin{equation}
	(-\Delta)^s f(x) = 2c_s \sin x \int_{|z|>0} \frac{\sin^2 \frac{z}{2}}{|z|^{1+2s}} \dd z = c_s \sin x \int_{|z|>0} \frac{1-\cos z}{|z|^{1+2s}} \dd z = \sin x  = f(x),
\end{equation}
and the proof is complete. 
\end{proof}
Therefore, the solution of \eqref{eq:ExactSuperDiff} with $D(x,t) = 1$ and $u_0(x) = \sin x$ is 
\begin{equation}\label{eq:ExactSuperDiffSin}
	u(x,t) = E_\alpha(-t^\alpha)\sin x, \quad \alpha\in(0,1), \quad s\in[0,1]. 
\end{equation}
The smooth function above can be used to verify the convergence of our scheme since the Mittag-Leffler function is an eigenfunction of the Caputo derivative (\cite[formula (2.4.58)]{kilbas2006theory}). In Fig. \ref{fig:ErrorTExactSuperDiff}, we have plotted the maximum in space and pointwise in time error computed at $t = T = 1$ for various $\tau$ and $\alpha$. We have conducted a series of other calculations with different parameters $s$, $X$, $h$, and all of them produced results similar to those presented in Fig. \ref{fig:ErrorTExactSuperDiff}. In particular, we see that pointwise in time the numerical scheme achieves order $1$, that is, all error lines have a unit slope in the log-log scale. According to Corollary \ref{cor:TimeOrder} the \emph{pointwise} at $t>0$ order of our numerical scheme should be at least equal to $1-\alpha$. As we see, the numerical order is always equal to $1$ which is typical for the L1 method. For example, in \cite{kopteva2019error} this behaviour was proved for local in space subdiffusion under smoothness assumptions that were more restrictive than ours. 

\begin{figure}[h!]
	\centering
	\includegraphics[scale = 0.6]{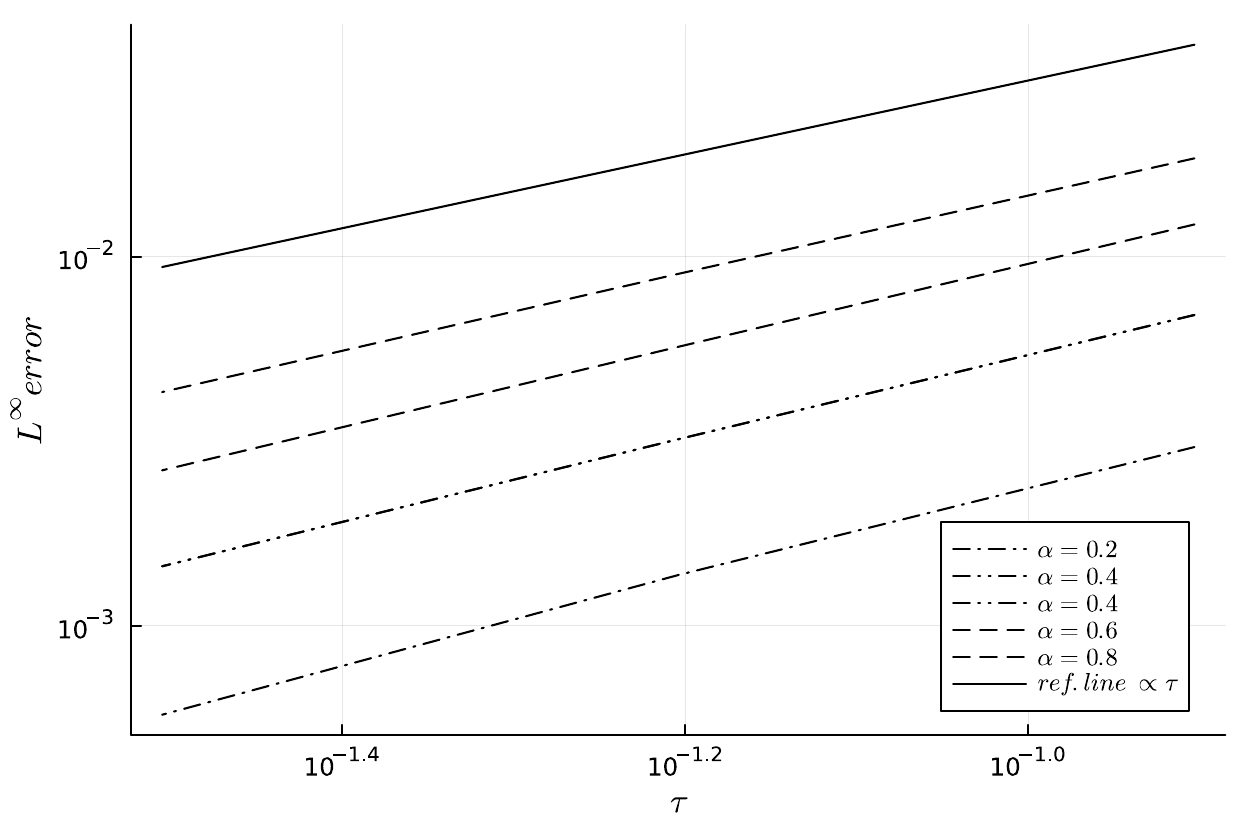}
	\caption{Maximum in space and pointwise in time error between numerical and analytical \eqref{eq:ExactSuperDiffSin} solution of the fractional diffusion equation \eqref{eq:FracDiff} with $D(x,t) = 1$ and $u_0(x) = \sin x$ computed for different discretization parameters $\tau$ with fixed value of $h=10^{-2}$. Here, $s=0.75$ and the size of domain is $X = 8\pi$. The error is at $t=1$. }
	\label{fig:ErrorTExactSuperDiff}
\end{figure}

\subsubsection{Compactly supported solution with spatially dependent diffusivity}
As a next example we construct an exact solution with compactly supported diffusivity $D$. Note that, in this case, there is no need to numerically compute the solution outside the support of $D$, since it remains constant in time. This fact is advantageous for implementation, since the domain truncation error vanishes. 

\begin{prop}
	Let the diffusivity be given by
	\begin{equation}\label{eq:DiffusivityExact}
		D(x,t) = \frac{1}{\Gamma(1+2s)}\left(1-x^2\right)^s_+.
	\end{equation}
	Then, the solution to the fractional diffusion equation \eqref{eq:FracDiff} is given by
	\begin{equation}\label{eq:FracDiffExact}
		u(x,t) = \frac{E_\alpha(-t^\alpha)}{\Gamma(1+2s)}\left(1-x^2\right)^s_+,
	\end{equation}
	where the Mittag-Leffler function $E_\alpha$ is defined by \eqref{eq:Mittag-Leffler}. The initial condition is $u_0(x) = \frac{(1-x^2)^s_+}{\Gamma(1+2s)}$. 
\end{prop}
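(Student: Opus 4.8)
The plan is to write the candidate solution in separated form, $u(x,t)=E_\alpha(-t^\alpha)\,u_0(x)$ with $u_0(x)=\Gamma(1+2s)^{-1}(1-x^2)^s_+$ (so $u_0$ is precisely the claimed initial datum, since $E_\alpha(0)=1$), and then verify \eqref{eq:FracDiff} by reducing everything to two independent identities: a temporal one, $\partial_t^\alpha E_\alpha(-t^\alpha)=-E_\alpha(-t^\alpha)$, and a spatial one, $(-\Delta)^s(1-x^2)^s_+=\Gamma(1+2s)$ for $x\in(-1,1)$. Granting these, the check is immediate. For $|x|<1$ one has $\partial_t^\alpha u(x,t)=u_0(x)\,\partial_t^\alpha E_\alpha(-t^\alpha)=-E_\alpha(-t^\alpha)\,u_0(x)$, while by \eqref{eq:DiffusivityExact}
\[
D(x,t)(-\Delta)^s u(x,t)=\frac{(1-x^2)^s}{\Gamma(1+2s)}\,E_\alpha(-t^\alpha)\,\frac{(-\Delta)^s(1-x^2)^s_+}{\Gamma(1+2s)}=E_\alpha(-t^\alpha)\,\frac{(1-x^2)^s}{\Gamma(1+2s)}=E_\alpha(-t^\alpha)\,u_0(x),
\]
so the two terms cancel and \eqref{eq:FracDiffExact} solves \eqref{eq:FracDiff}; for $|x|\geq 1$ both $D(x,t)$ and $u(x,\cdot)$ vanish identically, hence so do $\partial_t^\alpha u(x,t)$ and $D(x,t)(-\Delta)^s u(x,t)$ (the factor $(-\Delta)^s u(\cdot,t)$ being finite away from the boundary points). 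I would note in passing that the pointwise equation is asserted for $x\neq\pm 1$; the two corner points form a null set, do not affect the numerical comparison, and $u$ is in any case the viscosity solution / scheme limit from the earlier sections.

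For the temporal identity I would differentiate the Mittag-Leffler series \eqref{eq:Mittag-Leffler} for $E_\alpha(-t^\alpha)=\sum_{k\geq 0}(-1)^k t^{\alpha k}/\Gamma(\alpha k+1)$ term by term, which is legitimate because $E_\alpha$ is entire and the series converges locally uniformly. Using $\partial_t^\alpha 1=0$ and the Caputo power rule $\partial_t^\alpha t^{\alpha k}=\tfrac{\Gamma(\alpha k+1)}{\Gamma(\alpha(k-1)+1)}\,t^{\alpha(k-1)}$ for $k\geq 1$ (consistent with \eqref{eq:capdef}), one gets
\[
\partial_t^\alpha E_\alpha(-t^\alpha)=\sum_{k\geq 1}\frac{(-1)^k}{\Gamma(\alpha(k-1)+1)}\,t^{\alpha(k-1)}=-\sum_{j\geq 0}\frac{(-t^\alpha)^{j}}{\Gamma(\alpha j+1)}=-E_\alpha(-t^\alpha),
\]
after the index shift $j=k-1$; alternatively this is \cite[formula (2.4.58)]{kilbas2006theory}.

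The spatial identity is the fractional torsion computation on the interval: $u_0$ is exactly the normalized solution of the Dirichlet problem $(-\Delta)^s w=1$ in $(-1,1)$, $w\equiv 0$ on $\R\setminus(-1,1)$. The route I would take is to invoke the classical closed form of the fractional Laplacian of $(1-|x|^2)^s_+$ on the unit ball of $\R^d$, namely $(-\Delta)^s(1-|x|^2)^s_+=2^{2s}\,\Gamma(1+s)\,\Gamma(\tfrac{d}{2}+s)/\Gamma(\tfrac{d}{2})$ for $|x|<1$ (with the normalization of $(-\Delta)^s$ fixed by $c_s^{-1}=\int_{|z|>0}(1-\cos z)|z|^{-(1+2s)}\,\dd z$ as in the previous subsection), specialize to $d=1$, and reconcile the constant with $\Gamma(1+2s)$ using the Legendre duplication formula $\Gamma(s+\tfrac12)\,\Gamma(s+1)=2^{-2s}\sqrt{\pi}\,\Gamma(2s+1)$; this gives precisely $(-\Delta)^s(1-x^2)^s_+=\Gamma(1+2s)$ on $(-1,1)$, which is exactly why the normalizing factor $\Gamma(1+2s)^{-1}$ appears in \eqref{eq:DiffusivityExact}. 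A self-contained alternative, in the spirit of the $\sin$ computation carried out above, is to evaluate $(-\Delta)^s(1-x^2)^s_+$ directly from the principal-value singular-integral representation, reducing it to a one-dimensional integral that can be expressed through Beta functions and then simplified by the same duplication formula. I expect this spatial identity — and in particular pinning down the normalizing constant as $\Gamma(1+2s)$ — to be the only genuinely non-routine step; once it is in place, the assembly of \eqref{eq:FracDiffExact} is just the bookkeeping displayed above.
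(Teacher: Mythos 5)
Your proposal is correct and follows essentially the same route as the paper: verify the separated ansatz $u=E_\alpha(-t^\alpha)u_0$ using the Caputo eigenfunction identity for the Mittag-Leffler function and the known closed form of $(-\Delta)^s(1-x^2)^s_+$ on $(-1,1)$, with the constant reconciled to $\Gamma(1+2s)$ via the Legendre duplication formula, exactly as in the paper's proof. Your extra remarks (series proof of the temporal identity, the trivial check for $|x|\geq 1$ and the points $x=\pm1$) only add detail to the same argument.
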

\begin{proof}
	This is a straightforward calculation. Let $u$ be given by \eqref{eq:FracDiffExact} and recall that since the Mittag-Leffler function is an eigenfunction of the Caputo derivative we have $\partial^\alpha_t u = - u$. Moreover, by \cite[equation (6.4)]{huang2014numerical} we have
	\begin{equation}
		(-\Delta)^s \left(\frac{2^{-2s}\Gamma(\frac{1}{2})}{\Gamma(1+s)\Gamma(\frac{1}{2}+s)}(1-x^2)^s_+\right) = 1, \quad x\in (-1,1),
	\end{equation}
	and hence,
	\begin{equation}
		(-\Delta)^s u = E_\alpha(-t^\alpha) \frac{\Gamma(1+s)\Gamma(\frac{1}{2}+s)}{2^{-2s} \Gamma(\frac{1}{2})\Gamma(1+2s)}.
	\end{equation}
	The constant appearing above is actually equal to $1$, which can be seen by utilizing the duplication formula for the gamma function
	\begin{equation}
		\Gamma(1+s)\Gamma\left(\frac{1}{2}+s\right) = s \Gamma(s) \Gamma\left(\frac{1}{2}+s\right) \stackrel{\text{dupl.}}{=} s 2^{1-2s} \Gamma\left(\frac{1}{2}\right) \Gamma(2s) = 2^{-2s} \Gamma\left(\frac{1}{2}\right)\Gamma(1+2s),
	\end{equation}
	so that $D(x,t)(-\Delta)^s u = u = - \partial^\alpha_t u$. 
\end{proof}
In Fig. \ref{fig:ErrorTExact} on the left, we can see the maximum in space and time error plotted for $s=0.75$ and three different choices of $\alpha$ with varying $\tau$. The space grid parameter was fixed to be $h = 2^{-10}$ in order for the error to be dominated by the time discretization error, that is, to avoid saturation. As we can see, the numerical results are consistent with the claim that the temporal order is equal to $\alpha$. Corollary \ref{cor:TimeOrder} states that for small times this is indeed the case for $\alpha\in(0,1/2)$. Since we have numerically computed the maximal error for $t\in (0,T]$ we have been able to see the accuracy of the scheme near the origin and confirm that the order is $\alpha$. On the other hand, for $\alpha\in [1/2, 1)$ Corollary \ref{cor:TimeOrder} estimates the global order to be at least equal to $1-\alpha$ which is smaller than the numerically observed value. Note that, however, the regularity in time of the solution in this example is higher than the one assumed by us. Global order $\alpha$ for all $\alpha\in(0,1)$ is typical for L1 scheme applied to subdiffusion equations (see \cite{kopteva2019error} for the local in space problem).  

We have also conducted some other experiments that yielded the same conclusion. On the other hand, to the right of the same figure we can see the error with respect to the varying space discretization parameter $h$. The calculated order is much lower than the order of the truncation error, probably because of the low regularity of the solution and the diffusivity $D$. This case is not included in our theory, which requires that the diffusivity is at least $C^{2s+\epsilon}$. However, we wanted to present this example to show that even for diffusivities of low accuracy, the scheme converges and gives meaningful results.

\begin{figure}[h!]
	\centering
	\includegraphics[scale = 0.4]{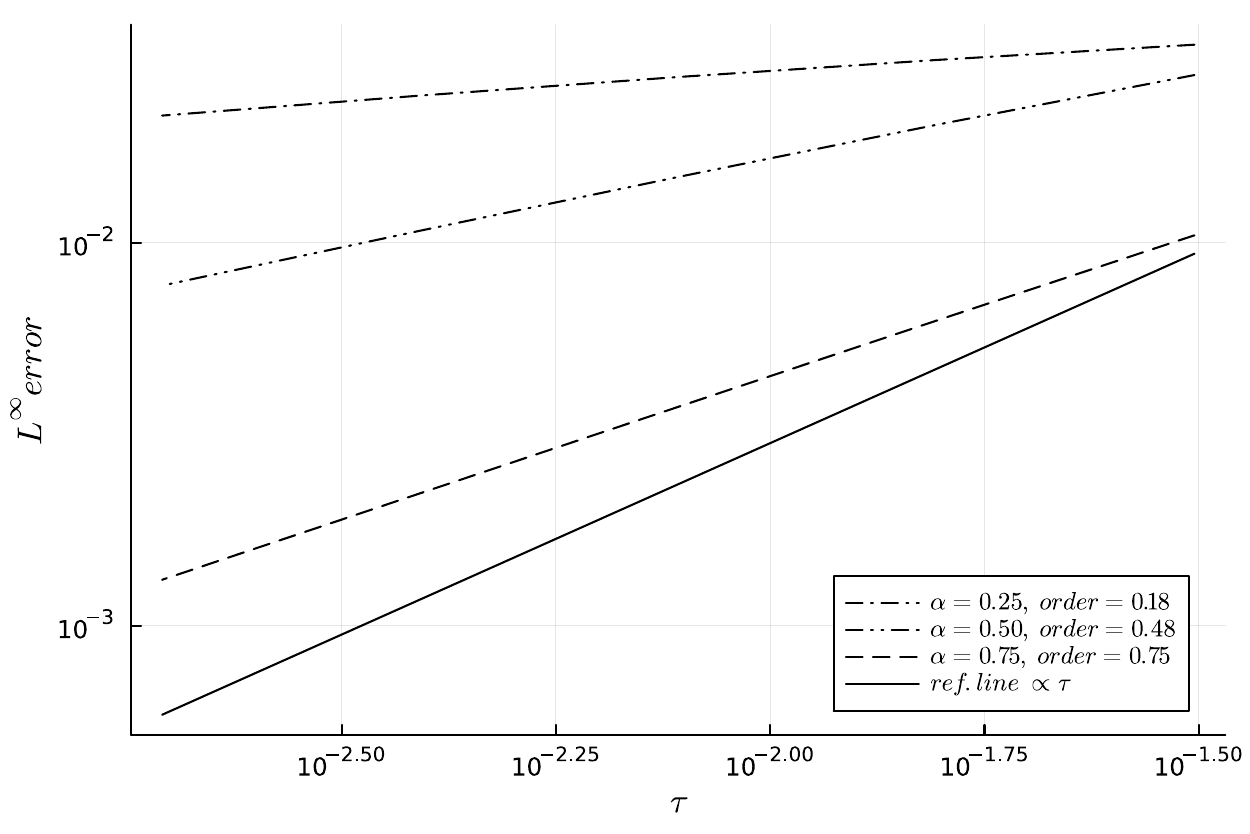}
	\includegraphics[scale = 0.4]{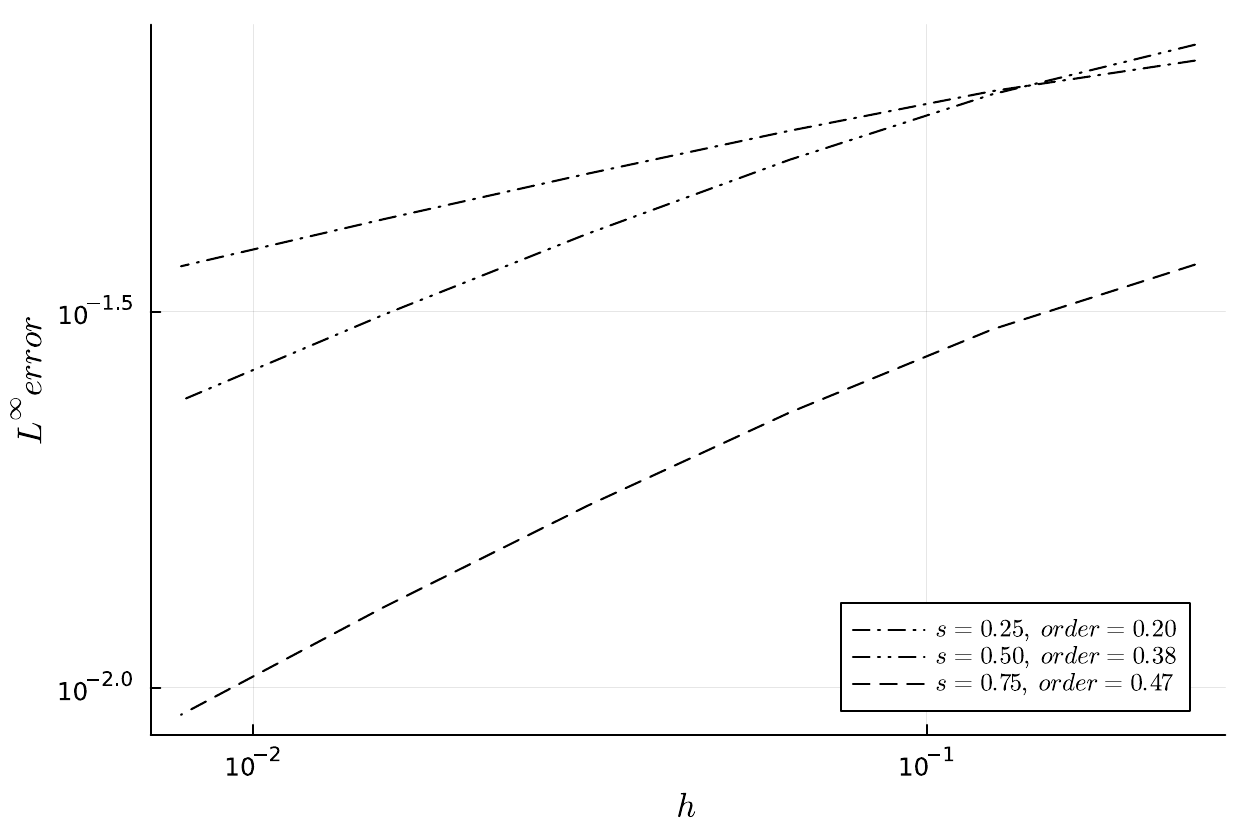}
	\caption{Maximum in space and time error between numerical and analytical \eqref{eq:FracDiffExact} solution of the fractional diffusion equation \eqref{eq:FracDiff} with diffusivity \eqref{eq:DiffusivityExact} computed for different discretization parameters: $\tau$ with fixed $h=2^{-10}$, $s = 0.75$ (left) and $h$  with fixed $\tau = 2^{-11}$, $\alpha = 0.75$ (right). }
	\label{fig:ErrorTExact}
\end{figure}

\subsection{Estimation of the order of convergence}
In the majority of situations it is impossible to obtain an exact solution to the nonlocal equation in time and space \eqref{eq:main}. However, to estimate the convergence order, we can use the extrapolation-based Aitken method (see, for example, \cite{Lin85}). Suppose that $U^n_\epsilon$ is the numerical approximation obtained with the grid parameter $\epsilon$. Then, assuming that the error behaves as $\|U^n_\epsilon - U^n_{\epsilon/2}\|_{L^\infty(\mathbb{R}\times (0,T])} \approx C \epsilon^p$ by extrapolating into half the grid we can estimate that
\begin{equation}
p \approx \log_2 \frac{\|U^n_\epsilon - U^n_{\epsilon/2}\|_{L^\infty(\mathbb{R}^d\times (0,T])}}{\|U^n_{\epsilon/2} - U^n_{\epsilon/4}\|_{L^\infty(\mathbb{R}^d\times (0,T])}}.
\end{equation}
When estimating order in time, we decrease $\tau$ while keeping $h$ fixed and \textit{vice versa}. 

\subsubsection{Fractional in space and time with $D=D(x,t)$ diffusivity}
As an example once again consider the space and time fractional diffusion with variable diffusivity and Gaussian initial condition
\begin{equation}\label{eq:DiffusivityInitial}
D(x,t) = (1+x^2+t^2)^{-1}, \quad u_0(x) = \frac{1}{\sqrt{4\pi \epsilon}} e^{-\frac{x^2}{4 \epsilon}},
\end{equation}
where $\epsilon = 0.01$. We have tested various scenarios with different grid parameters and diffusivities, and the results were mostly identical to those presented in Table \ref{tab:OrdersEst}. The first observation is that the estimated order in space is essentially equal to $2$, which coincides with the truncation error of the chosen weights \eqref{eq:LaplacianWeights}. This confirms observed fast convergence in space which justifies the choice of a moderate value of the space grid parameter $h = 2^{-6}$. On the other hand, estimation of the temporal order is more difficult to conduct numerically due to the fact that for smaller $\alpha$ the evolution of the solution proceeds more slowly. According to Corollary \ref{cor:TimeOrder} we know that the \emph{global} order in time will at least be equal to $\min\{\alpha,1-\alpha\}$.  For values of $\alpha \in (1/2, 1)$ the estimated order of convergence is consistent with $\alpha$ and increases for $\alpha \in (0,1/2)$ achieving a large value of $\approx 1.8$ for $\alpha = 0.1$.This numerical estimation may show that in reality the scheme performs better than predicted in Corollary \ref{cor:TimeOrder}. Note however, that Aitken extrapolation is only an estimation of the actual order of convergence and might need prohibitively fine grid to resolve the true order for small values of $\alpha$. However, we can conclude that again our numerical computations indicate that the scheme performs according to the previously mentioned literature findings for the L1 scheme (with better empirical results for smaller $\alpha$).

\begin{table}[h!]
\centering
\begin{tabular}{cccccc}
	\toprule
	$\alpha\backslash s$ & 0.1 &  0.25 & 0.5 & 0.75 & 0.9 \\
	\midrule 
	0.1 & 1.73 & 1.75 & 1.79 & 1.82 & 1.83 \\
	0.25 & 1.22 & 1.25 & 1.31 & 1.40 & 1.44 \\
	0.5 & 0.74 & 0.75 & 0.77 & 0.81 & 0.84 \\
	0.6 & 0.64 & 0.65 & 0.66 & 0.68 & 0.70 \\
	0.7 & 0.70 & 0.69 & 0.70 & 0.68 & 0.66 \\
	0.8 & 0.80 & 0.80 & 0.80 & 0.80 & 0.79 \\
	0.9 & 0.94 & 0.93 & 0.92 & 0.91 & 0.90 \\
	\bottomrule
\end{tabular}
\qquad
\begin{tabular}{cccccc}
	\toprule
	$\alpha\backslash s$ & 0.1 &  0.25 & 0.5 & 0.75 & 0.9 \\
	\midrule 
	0.1 & 2.03 & 2.04 & 2.02 & 2.00 & 2.00 \\
	0.25 & 2.03 & 2.04 & 2.02 & 2.00 & 2.00\\
	0.5 & 2.04 & 2.04 & 2.02 & 2.00 & 2.00 \\
	0.6 & 2.03 & 2.04 & 2.02 & 2.00 & 2.00 \\
	0.7 & 2.03 & 2.04 & 2.02 & 2.00 & 2.00 \\
	0.8 & 2.03 & 2.03 & 2.02 & 2.00 & 2.00 \\
	0.9 & 2.02 & 2.03 & 2.01 & 2.00 & 2.00 \\
	\bottomrule
\end{tabular}	
\caption{Estimated order of convergence in time (left) and space (right) for different values of $\alpha$ (rows) and $s$ (columns) based on solutions to \eqref{eq:FracDiff} with $D(x,t) = (1+x^2+t^2)^{-1}$. Here, the base values of grid parameters are $\tau = 2^{-9}$ and $h = 2^{-6}$. }
\label{tab:OrdersEst} 
\end{table}

\subsubsection{Discrete Laplacian diffusion problem}
Since our main equation \eqref{eq:main} allows for a very general class of spatial operators $\mathcal{L}^{\mu,\sigma}$, we can use this fact to test our scheme for an example that is always exact in space. That is to say, let us solve a nonlocal in time discrete in space PDE
\begin{equation}\label{eq:DiscreteLaplacian}
\partial^\alpha_t u(x,t) = D(x,t) \left(u(x+1)+u(x-1)-2u(x)\right),
\end{equation}
where the right-hand side is the central difference scheme for a unit grid spacing. Since the spatial operator is originally discrete, the numerical error of approximation vanishes, and we can closely investigate only the temporal discretization. As the diffusivity and initial condition we again take \eqref{eq:DiffusivityInitial}. The estimated order of convergence is presented in Tab. \ref{tab:OrderDiscLaplacian}. As we can see, the results are very similar to the previous case, indicating that the order is equal to $\alpha$ for $\alpha \in(1/2,1)$. For smaller values we observe a higher order approaching $1.8$. 

\begin{table}[h!]
\centering
\begin{tabular}{cccccccc}
	\toprule
	$\alpha$ & 0.1 & 0.25 & 0.5 & 0.6 & 0.7 & 0.8 & 0.9 \\
	\midrule 
	order in time
	& 1.81 & 1.32 & 0.76 & 0.66 & 0.69 & 0.80 & 0.93 \\
	\bottomrule
\end{tabular}
\caption{Estimated order of convergence in time for different values of $\alpha$ based on solutions to \eqref{eq:DiscreteLaplacian} with $D(x,t) = (1+x^2+t^2)^{-1}$. Here, the base values of grid parameters are $\tau = 2^{-9}$ and $h = 2^{-6}$. }
\label{tab:OrderDiscLaplacian}
\end{table}

\section*{Acknowledgement}
Ł.P. has been supported by the National Science Centre, Poland (NCN) under the grant Sonata Bis with a number NCN 2020/38/E/ST1/00153. F.d.T. was supported by the Spanish Government through RYC2020-029589-I, PID2021-127105NB-I00 and CEX2019-000904-S funded by the MICIN/AEI. Part of this material is based upon work supported by the Swedish Research Council under grant no. 2016-06596 while F. d. T. was in residence at Institut Mittag-Leffler in Djursholm, Sweden, during the research program "Geometric Aspects of Nonlinear Partial Differential Equations", fall of 2022.

\appendix	
\section{Discrete fractional Gr\"onwall inequalities}\label{app:Gronwall}

Here, we provide a proof of the discrete fractional Gr\"onwall inequalities for the Riemann-Liouville and Caputo L1 discretizations. The latter has previously been proved in the literature (see for ex. \cite{li2018analysis}) and below we focus mainly on the former case. However, we emphasize that our proof for the Riemann-Liouville discrete derivative is elementary, straightforward, and can be easily adapted for the Caputo case. The crucial result that we utilize is the classical version of the Gr\"onwall inequality for the discrete fractional integral.

\begin{prop}[Discrete fractional Gr\"onwall inequality (integral version) (\cite{dixon1985order}, Theorem 2.1)]
\label{prop:GronwallInt}
 Let $\alpha\in(0,1)$, $\tau>0$ and $N\in \N$. Let also $\{y^n\}_{0\leq n\leq N}$ be a nonnegative sequence satisfying 
\begin{equation}
	y^n \leq M_0 t_n^{\alpha-1} + M_1 \tau^{\alpha}\sum_{k=0}^{n-1} (n-k)^{\alpha-1} y^k + M_2, \quad \textup{for all} \quad 0\leq n\leq N,
\end{equation}
where $M_{i}$ with $i=0,1,2$ are some nonnegative constants (here  $0^{\alpha-1}=1$). Then,  we have
\[
	y^n \leq M_0\Gamma(\alpha) t_n^{\alpha-1} E_{\alpha,\alpha} (M_1 \Gamma(\alpha) t_n^\alpha) + M_2 E_\alpha(M_1 \Gamma(\alpha) t_n^\alpha), \quad \textup{for all} \quad 0\leq n\leq N,
\]
  where $E_\alpha$ and $E_{\alpha,\alpha}$ are the Mittag-Leffler functions defined in \eqref{eq:Mittag-Leffler}  and $t_n=\tau n$.

\end{prop}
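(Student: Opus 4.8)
The plan is a Picard-type iteration of the inequality combined with discrete Beta estimates for the iterated kernel. Write the hypothesis in operator form as $y \le f + \mathcal{A}y$ componentwise, where $f^n := M_0 t_n^{\alpha-1} + M_2$ and
\[
(\mathcal{A}v)^n := M_1\tau^{\alpha}\sum_{k=0}^{n-1}(n-k)^{\alpha-1}v^k .
\]
The operator $\mathcal{A}$ has nonnegative entries and is strictly lower triangular in the sense that $(\mathcal{A}v)^n$ depends only on $v^0,\dots,v^{n-1}$ and $(\mathcal{A}v)^0=0$; hence $(\mathcal{A}^{m}v)^n=0$ once $m>n$. Applying $\mathcal{A}$ repeatedly to $y\le f+\mathcal{A}y$ and using that $\mathcal{A}$ is order preserving gives, for every fixed $n$, the bound $y^n \le \sum_{j=0}^{n}(\mathcal{A}^j f)^n$. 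By linearity it then suffices to estimate $(\mathcal{A}^j f_1)^n$ and $(\mathcal{A}^j f_2)^n$ separately, where $f_1^n=M_0 t_n^{\alpha-1}$ and $f_2^n=M_2$.

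The core estimate is a discrete Beta inequality: for $\mu>0$,
\[
\tau^{\alpha}\sum_{k=0}^{n-1}(n-k)^{\alpha-1}t_k^{\mu-1} \;=\; \sum_{k=0}^{n-1}\tau\,(t_n-t_k)^{\alpha-1}t_k^{\mu-1} \;\le\; \int_0^{t_n}(t_n-s)^{\alpha-1}s^{\mu-1}\,\dd s \;=\; \frac{\Gamma(\alpha)\Gamma(\mu)}{\Gamma(\alpha+\mu)}\,t_n^{\alpha+\mu-1}.
\]
For $\mu\ge1$ this is immediate, since on each $[t_k,t_{k+1}]$ one has $(t_n-s)^{\alpha-1}\ge (t_n-t_k)^{\alpha-1}$ (the map $s\mapsto(t_n-s)^{\alpha-1}$ is increasing) and $s^{\mu-1}\ge t_k^{\mu-1}$, so the Riemann sum underestimates the integral. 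Granting this estimate, one proves by induction on $j$ that
\[
(\mathcal{A}^j f_2)^n \le M_2\,\frac{(M_1\Gamma(\alpha))^j\,t_n^{j\alpha}}{\Gamma(j\alpha+1)}, \qquad (\mathcal{A}^j f_1)^n \le M_0\Gamma(\alpha)\,\frac{(M_1\Gamma(\alpha))^j\,t_n^{(j+1)\alpha-1}}{\Gamma((j+1)\alpha)},
\]
the inductive step being exactly one application of the discrete Beta inequality with $\mu=j\alpha+1$, respectively $\mu=(j+1)\alpha$.

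Finally, summing these bounds over $j\ge0$ and setting $x:=M_1\Gamma(\alpha)t_n^{\alpha}$, we recognize $\sum_{j\ge0}x^j/\Gamma(j\alpha+1)=E_{\alpha}(x)$ and $\sum_{j\ge0}x^j/\Gamma((j+1)\alpha)=E_{\alpha,\alpha}(x)$, which yields $y^n\le M_0\Gamma(\alpha)t_n^{\alpha-1}E_{\alpha,\alpha}(x)+M_2 E_{\alpha}(x)$, the claimed bound. The main obstacle is the discrete Beta inequality in the regime $0<\mu<1$ (which occurs in the $f_1$ chain when $(j+1)\alpha<1$): there $s^{\mu-1}$ is itself singular at $s=0$, the naive interval-by-interval comparison fails on the first subinterval, and one must argue separately near the origin and dominate the boundary term produced by the convention $0^{\alpha-1}=1$ in order to retain the \emph{sharp} constant $\Gamma(\alpha)\Gamma(\mu)/\Gamma(\alpha+\mu)$; the remaining steps are routine bookkeeping.
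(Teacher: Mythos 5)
The paper itself offers no proof of this proposition: it is imported verbatim from Dixon--McKee (\cite{dixon1985order}, Theorem 2.1), so your attempt can only be judged on its own merits, not against a paper argument. The skeleton of your proposal is fine: $\mathcal{A}$ is order preserving and vanishes on index $n$ after $n+1$ applications, so $y^n\le\sum_{j=0}^{n}(\mathcal{A}^jf)^n$ is correct, and the $f_2$-chain is genuinely complete, since there every application of your discrete Beta inequality has $\mu=j\alpha+1\ge 1$, where the left-endpoint comparison with the integral is valid; this yields the $M_2E_\alpha$ part of the bound.

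The gap is the $f_1$-chain, and it is not deferrable bookkeeping. First, the singular regime is not confined to "$(j+1)\alpha<1$": the very first application of $\mathcal{A}$ to $f_1$ uses $\mu=\alpha<1$ for \emph{every} $\alpha\in(0,1)$. Second, the sharp-constant discrete Beta inequality you invoke there is simply false for unrestricted $\tau$, so no argument "near the origin" can rescue it: at $n=1$, with the convention $t_0^{\alpha-1}=1$, the left-hand side reduces to the single term $\tau^\alpha$, while the right-hand side is $\frac{\Gamma(\alpha)^2}{\Gamma(2\alpha)}t_1^{2\alpha-1}=\frac{\Gamma(\alpha)^2}{\Gamma(2\alpha)}\tau^{2\alpha-1}$, so the inequality requires $\tau^{1-\alpha}\le\Gamma(\alpha)^2/\Gamma(2\alpha)$, which fails for large $\tau$ (e.g.\ $\alpha=0.9$, $\tau=10$ gives $10^{0.1}\approx1.26>1.23$). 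Consequently your claimed intermediate bound $(\mathcal{A}f_1)^1\le M_0M_1\Gamma(\alpha)^2t_1^{2\alpha-1}/\Gamma(2\alpha)$ is already wrong, while the proposition is asserted for arbitrary $\tau>0$. To complete a proof along your lines one must treat the $k=0$ boundary term (and, for small $\alpha$, the first few singular iterations) separately and show that the resulting mesh-dependent corrections can still be absorbed into the two Mittag--Leffler series with the stated constants --- which is essentially the actual content of Dixon and McKee's argument --- or else accept non-sharp constants or a restriction on $\tau$, which would prove a weaker statement than the one quoted. As written, the proposal is incomplete exactly at its load-bearing step.
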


\begin{lem}[Discrete fractional Gr\"onwall inequality (differential version)]\label{lem:grondif}
Let $\alpha\in(0,1)$ and $\tau>0$. Let also $\{y^n\}_{0\leq n\leq N}$ and $\{F^n\}_{0\leq n\leq N}$ be nonnegative sequences satisfying, for all $1\leq n\leq N$, the inequalities
\begin{equation}\label{eq:HypGronRL}
\partial_\tau^\alpha y^{n} \leq \lambda_0 y^n + \lambda_1 y^{n-1}+ F^n, \quad
\end{equation}
and
\begin{equation}\label{eqn:GronRLFBound}
\frac{\tau^\alpha}{\Gamma(\alpha)} \sum_{k=0}^{n-1} (n-k)^{\alpha-1}F^{k+1} \leq F_1 + t_n^{\alpha-1} F_2,
\end{equation}
where $\partial_\tau^\alpha$ is either the L1 discrete Riemann-Liouville derivative \eqref{eq:discRLder} or the L1 discrete Caputo derivative \eqref{def:disccap},  $F_1$ and $F_1$ are nonnegative constants (possibly depending on $\tau$), $t_n:=\tau n$, and $\lambda_0,\lambda_1\geq0$. Finally, let $\tau_0,M>0$ be such that 
\[
\tau_0 < (\lambda_0 \Gamma(2-\alpha))^{-\frac{1}{\alpha}} \quad \textup{and} \quad M= \frac{\Gamma(\alpha)\Gamma(2-\alpha)}{1-\lambda_0 \tau_0^\alpha\Gamma(2-\alpha)}.
\]
The following hold for all $\tau < \tau_0$ and all $0\leq n\leq N$:
\begin{enumerate}[\rm (a)]
\item \emph{(Riemann-Liouville)} If $\partial_\tau^\alpha=  {^{RL}\partial_\tau^\alpha}$, then
\begin{equation}
		y^n \leq (y_0\tau^{1-\alpha} + M F_2) \Gamma(\alpha)E_{\alpha,\alpha}(2\max\{\lambda_0,\lambda_1\} M t_n^\alpha)t_n^{\alpha-1} + M E_{\alpha} (2\max\{\lambda_0,\lambda_1\} M t_n^\alpha)F_1.
	\end{equation}
\item \emph{(Caputo)} If $\partial_\tau^\alpha=  {^{C}\partial_\tau^\alpha}$, then
\begin{equation}\label{eq:GrongCap1}
		y^n \leq M \Gamma(\alpha)E_{\alpha,\alpha}(2\max\{\lambda_0,\lambda_1\} M t_n^\alpha) F_2 t_n^{\alpha-1} + \frac{E_{\alpha} (2\max\{\lambda_0,\lambda_1\} M t_n^\alpha)}{1-\lambda_0 \tau_0^\alpha\Gamma(2-\alpha)}\left(y^0+ \Gamma(\alpha)\Gamma(2-\alpha)F_1)\right).
	\end{equation}
\end{enumerate}
\end{lem}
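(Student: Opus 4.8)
The plan is to turn the differential-form hypothesis \eqref{eq:HypGronRL} into an integral-form inequality to which Proposition~\ref{prop:GronwallInt} applies verbatim; the Mittag-Leffler estimates in (a) and (b) are then just read off from its conclusion. The bridge is a discrete fractional fundamental theorem of calculus. Writing the L1 operator as a lower-triangular Toeplitz operator, $\partial_\tau^\alpha y^n=\frac{\tau^{-\alpha}}{\Gamma(2-\alpha)}\sum_{i\ge 0}d_i\,y^{n-i}$ with $d_0=1$ and $d_i=-(b_{i-1}-b_i)$ for $i\ge 1$ (this holds for all $n$ in the Riemann-Liouville case, and for $n\ge1$ in the Caputo case, up to how $y^0$ enters), the generating function factors as $D(\zeta)=\sum_i d_i\zeta^i=1-P(\zeta)$, where $P(\zeta)=\sum_{i\ge1}(b_{i-1}-b_i)\zeta^i$ has nonnegative coefficients summing to $b_0-\lim_i b_i=1$ because the L1 weights $b_i=(i+1)^{1-\alpha}-i^{1-\alpha}$ are positive and decreasing. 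Hence $1/D(\zeta)=\sum_{m\ge0}P(\zeta)^m$ has nonnegative coefficients $\gamma_0=1,\gamma_1,\gamma_2,\dots\ge 0$, which are exactly the weights of the inverse operator and satisfy the homogeneous renewal recursion $\gamma_j=\sum_{i=1}^{j}(b_{i-1}-b_i)\gamma_{j-i}$.

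First I would apply this inverse operator to \eqref{eq:HypGronRL}: since it has nonnegative weights the inequality is preserved, and a short induction on $n$ — using the nonnegativity of both the $\gamma_i$ and the original weights $b_{i-1}-b_i$ — yields the self-referential estimate
\[
y^n\ \le\ c_n\,y^0+\tau^\alpha\Gamma(2-\alpha)\sum_{k=1}^{n}\gamma_{n-k}\big(\lambda_0 y^k+\lambda_1 y^{k-1}+F^k\big),
\]
with $c_n=1$ in the Caputo case and $c_n=\gamma_n$ in the Riemann-Liouville case. The $k=n$ summand carries $\gamma_0=1$ and contributes $\lambda_0\tau^\alpha\Gamma(2-\alpha)y^n$; I move it to the left-hand side, which is legitimate precisely because $\tau<\tau_0$ forces $1-\lambda_0\tau^\alpha\Gamma(2-\alpha)\ge 1-\lambda_0\tau_0^\alpha\Gamma(2-\alpha)>0$ and produces the constant $M=\Gamma(\alpha)\Gamma(2-\alpha)/(1-\lambda_0\tau_0^\alpha\Gamma(2-\alpha))$. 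Using the single-term consequence $F^n\le\frac{\Gamma(\alpha)}{\tau^\alpha}(F_1+t_n^{\alpha-1}F_2)$ of \eqref{eqn:GronRLFBound}, then \eqref{eqn:GronRLFBound} itself at level $n-1$ to absorb the $F^k$-sum, together with the elementary shift bound $(n-1-k)^{\alpha-1}\le 2(n-k)^{\alpha-1}$ and $\lambda_0 y^k+\lambda_1 y^{k-1}\le\max\{\lambda_0,\lambda_1\}(y^k+y^{k-1})$, one arrives at an inequality of the exact shape
\[
y^n\ \le\ M_0\,t_n^{\alpha-1}+M_1\,\tau^\alpha\sum_{k=0}^{n-1}(n-k)^{\alpha-1}y^k+M_2 ,
\]
with $M_1$ a fixed multiple of $\max\{\lambda_0,\lambda_1\}M/\Gamma(\alpha)$, $M_0$ a multiple of $MF_2$, and $M_2$ a multiple of $y^0+MF_1$ (in the Riemann-Liouville case the $y^0$-term instead feeds the $t_n^{\alpha-1}$ coefficient, via $c_n=\gamma_n\le C\tau^{1-\alpha}t_n^{\alpha-1}$, whence the $y_0\tau^{1-\alpha}$ there). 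Proposition~\ref{prop:GronwallInt} then gives precisely \eqref{eq:GrongCap1} and its Riemann-Liouville analogue, the Mittag-Leffler arguments $2\max\{\lambda_0,\lambda_1\}Mt_n^\alpha$ coming from $M_1\Gamma(\alpha)t_n^\alpha$.

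The main obstacle is the passage from the inverse weights $\gamma_{n-k}$ to the pure power weights $(n-k)^{\alpha-1}$ demanded by Proposition~\ref{prop:GronwallInt}: one must show $\gamma_j\le C\,j^{\alpha-1}/(\Gamma(\alpha)\Gamma(2-\alpha))$ for all $j\ge1$, with $C=C(\alpha)$. Since $B(\zeta)=\sum_i b_i\zeta^i=\frac{1-\zeta}{\zeta}\,\mathrm{Li}_{\alpha-1}(\zeta)$, one has $D(\zeta)=(1-\zeta)B(\zeta)\sim\Gamma(2-\alpha)(1-\zeta)^{\alpha}$ and hence $1/D(\zeta)\sim\Gamma(2-\alpha)^{-1}(1-\zeta)^{-\alpha}$ as $\zeta\to1^-$, whose coefficients are $\binom{j+\alpha-1}{j}\sim j^{\alpha-1}/\Gamma(\alpha)$; the work is to turn this asymptotic into a bound valid for every $j$ without Tauberian machinery. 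I would do this elementarily, either by proving $\gamma_j\le C j^{\alpha-1}$ inductively from the renewal recursion $\gamma_j=\sum_{i=1}^{j}(b_{i-1}-b_i)\gamma_{j-i}$, exploiting that $i\mapsto b_{i-1}-b_i$ is positive, decreasing and summable, or by dominating the discrete convolution $\tau^\alpha\Gamma(2-\alpha)\sum_k\gamma_{n-k}(\cdot)$ by the continuous fractional integral $\frac{1}{\Gamma(\alpha)}\int_0^{t}(t-s)^{\alpha-1}(\cdot)\,ds$. Once this bound is in hand — and noting, for the Caputo statement, the identity ${}^{C}\partial_\tau^\alpha y^n={}^{RL}\partial_\tau^\alpha(y^\bullet-y^0)^n$, which follows since ${}^{RL}\partial_\tau^\alpha$ applied to the constant sequence $y^0$ equals $\frac{\tau^{-\alpha}}{\Gamma(2-\alpha)}b_n y^0$ and lets one reduce (b) to (a) — everything else is bookkeeping of constants.
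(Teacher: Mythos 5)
Your overall architecture is sound and genuinely different from the paper's: you invert the L1 operator as a lower-triangular Toeplitz/renewal convolution with nonnegative weights $\gamma_j$, drop the nonnegative remainder, move the diagonal term $\lambda_0\tau^\alpha\Gamma(2-\alpha)y^n$ to the left using $\tau<\tau_0$, and then feed the result into Proposition~\ref{prop:GronwallInt}. The paper instead never constructs the inverse: it proves, by induction on $n$, the integral-form inequality with the explicit kernel $(n-k+1)^{\alpha-1}$ (its Step 1), the induction resting on the weight inequality $\sum_{l=1}^{n}(b_{l-1}-b_l)(n-l+1)^{\alpha-1}\leq (n+1)^{\alpha-1}$ cited from Jin--Lazarov--Zhou; its Step 2 is then the same bookkeeping you do. The problem is that the step you yourself flag as ``the main obstacle'' --- the uniform pointwise bound $\gamma_j\leq C\,j^{\alpha-1}/(\Gamma(\alpha)\Gamma(2-\alpha))$ --- is exactly where all the difficulty of the lemma lives, and you do not prove it. The singularity analysis $D(\zeta)\sim\Gamma(2-\alpha)(1-\zeta)^{\alpha}$ gives only an asymptotic statement about the coefficients of $1/D$, not a bound valid for every $j$ with a controlled constant, and your two proposed repairs are not fillers of detail: the inductive route via the renewal recursion $\gamma_j=\sum_{i=1}^{j}(b_{i-1}-b_i)\gamma_{j-i}$ requires precisely an inequality of the type $\sum_{i=1}^{j}(b_{i-1}-b_i)(j-i)^{\alpha-1}\lesssim j^{\alpha-1}$, i.e.\ the same nontrivial weight estimate the paper imports, while dominating the discrete convolution by the continuous fractional integral again presupposes a pointwise comparison of $\gamma_j$ with $j^{\alpha-1}$. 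So the crux is deferred, not resolved.

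A secondary but real issue is the constants. The lemma is stated with explicit constants: $M=\Gamma(\alpha)\Gamma(2-\alpha)/(1-\lambda_0\tau_0^\alpha\Gamma(2-\alpha))$ and Mittag-Leffler argument $2\max\{\lambda_0,\lambda_1\}Mt_n^\alpha$. The paper's induction produces the kernel $(n-k+1)^{\alpha-1}$ with constant exactly $1$, and the factor $2$ comes only from merging the $\lambda_0$- and $\lambda_1$-sums. In your scheme every use of a generic $C(\alpha)$ in $\gamma_j\leq Cj^{\alpha-1}$, plus the extra factors from the shift bounds $(n-k)^{\alpha-1}\leq 2(n-k+1)^{\alpha-1}$ and the separate single-term treatment of $F^n$, inflates $M_1$ and $M_0$, so at best you would obtain the inequality with larger constants inside $E_{\alpha,\alpha}$ and $E_\alpha$ than those asserted. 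That may be harmless for the applications in the paper, but it is not the statement as written; recovering the stated constants again requires the sharp weight inequality, i.e.\ the very step that is missing. (Your closing remark reducing the Caputo case to Riemann--Liouville via $y^n-y^0$ is also unnecessary and slightly off, since $y^n-y^0$ need not be nonnegative as required by Proposition~\ref{prop:GronwallInt}; your direct treatment with $c_n=1$ is the right one, and it is what the paper's Step 3 does.)
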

\begin{proof}
We mainly focus only on the Riemann-Liouville case and comment on the easier Caputo version at the end of our proof. The strategy is to ``invert" the discrete Riemann-Liouville derivative and then use Proposition \ref{prop:GronwallInt} to obtain a closed bound for $y^n$ in terms of the Mittag-Leffler function. We start by observing that the inequality (\ref{eq:HypGronRL}) is equivalent to 
\begin{equation}
	\label{eqn:Gronwall1}
	y^n \leq (b_{n-1}-b_n)y^0+ \sum^{n-1}_{k=1} (b_{n-k-1}-b_{n-k}) y^k + \mu_0 y^n + \mu_1 y^{n-1}+ \mu_2 F^n,
\end{equation}
with $\mu_0 = \lambda_0 \tau^\alpha\Gamma(2-\alpha)$, $\mu_1 = \lambda_1 \tau^\alpha\Gamma(2-\alpha)$ and $\mu_2 =  \tau^\alpha\Gamma(2-\alpha)$. 
 From here, we proceed in several steps.
 
\textbf{Step 1.} We will show that the following integral-type inequality holds for all $n\geq1$: 
\begin{equation}
	\label{eqn:Gronwall2}
	y^n \leq y^0 (n+1)^{\alpha-1} + \mu_0 \sum_{k=1}^{n} (n-k+1)^{\alpha-1} y^k+ \mu_1 \sum_{k=0}^{n-1} (n-k)^{\alpha-1} y^k + \mu_2 \sum_{k=1}^{n} (n-k+1)^{\alpha-1} F^k.
\end{equation}
We proceed by induction. First, for $n=1$ from (\ref{eqn:Gronwall1}) we have
\begin{equation}
	\begin{split}
		y^1 &\leq (b_0 - b_1) y^0 + \mu_0 y^1 + \mu_1 y^0 + \mu_2 F^1 \\
		&=   (2-2^{1-\alpha}) y^0  + \mu_0 \sum_{k=1}^{1} (1-k+1)^{\alpha-1} y^k +\mu_1 \sum_{k=0}^{0} (1-k)^{\alpha-1} y^k + \mu_2 \sum_{k=1}^{1} (1-k+1)^{\alpha-1} F^k.
	\end{split}
\end{equation}
It remains to show that $2-2^{1-\alpha} \leq 2^{\alpha -1}$ which  follows easily since 
\begin{equation}
2^{\alpha -1} +2^{1-\alpha}-2= 2^{\alpha-1} (1 +2^{2-2\alpha}-2^{2-\alpha}) =2^{\alpha-1} (2^{1-\alpha}-1)^2\geq0
\end{equation}
 Now, assume that (\ref{eqn:Gronwall2}) is valid for $i=1,\ldots, n-1$. From  \eqref{eqn:Gronwall1}, we separate the term with $y^0$, and use the induction hypothesis to get 
\begin{equation}\label{eq:ind44}
	\begin{split}
		y^n 
		&\leq (b_{n-1}-b_n)y^0 + \sum_{k=1}^{n-1} (b_{n-k-1}-b_{n-k})y^k + \mu_0 y^n + \mu_1 y^{n-1} + \mu_2 F^n\\
		&\leq y^0 \sum_{k=0}^{n-1} (b_{n-k-1}-b_{n-k}) (k+1)^{\alpha-1} \\
		&\quad + \mu_0 \left( y^{n} + \sum_{k=1}^{n-1} (b_{n-k-1}-b_{n-k})\sum_{j=1}^{k} (k-j+1)^{\alpha-1} y^j \right) \\
		&\quad + \mu_1 \left( y^{n-1} + \sum_{k=1}^{n-1} (b_{n-k-1}-b_{n-k})\sum_{j=0}^{k-1} (k-j)^{\alpha-1} y^j \right) \\
		&\quad + \mu_2 \left(F^n + \sum_{k=1}^{n-1} (b_{n-k-1}-b_{n-k})\sum_{j=1}^{k} (k-j+1)^{\alpha-1}F^j\right) \\
		&=: y^0 S_{-1} + \mu_0 S_0 + \mu_1 S_1 + \mu_2 S_2.
	\end{split}
\end{equation} 
We now proceed to estimating each of the $S_i$ terms.  We crucially rely in the following estimate
\begin{equation}
	\label{eqn:GronwallMainIneq}
	\sum_{l=1}^{n} (b_{l-1}-b_{l}) (n-l+1)^{\alpha-1} \leq (n+1)^{\alpha-1},
\end{equation}
whose proof can be found in \cite[Lemma 3.4]{jin2017analysis}. From \eqref{eqn:GronwallMainIneq}, we get the bound $S_{-1} \leq (n+1)^{\alpha-1}$ by performing the   change $l = n-k$ in the summation variable.  
 To bound $S_0$, we interchange the order of summation and  perform the change $l=n-k$ to get 
\begin{equation}
	\begin{split}
		S_0 
		&= y^{n} + \sum_{j=1}^{n-1}y^j \left(\sum_{k=j}^{n-1}(b_{n-k-1}-b_{n-k}) (k-j+1)^{\alpha-1}\right)  \\
		&= y^{n} + \sum_{j=1}^{n-1} y^j \left(\sum_{l=1}^{n-j}(b_{l-1}-b_{l}) (n-j-l+1)^{\alpha-1}\right) \\
		& \leq y^{n} + \sum_{j=1}^{n-1} (n-j+1)^{\alpha-1} y^j \\
		&= \sum_{j=1}^{n} (n-j+1)^{\alpha-1} y^j,
	\end{split}
\end{equation}
where the inequality comes from (\ref{eqn:GronwallMainIneq}) with $n$ replaced by $n-j$.   The precise same calculations show
\[
S_2\leq  \sum_{j=1}^{n} (n-j+1)^{\alpha-1} F^j.
\]

 For the last remaining sum, $S_1$, we proceed in a similar way to get
\begin{equation}
\begin{split}
S_1&=y^{n-1} + \sum_{j=0}^{n-2} y^j \left(\sum_{l=1}^{n-j-1} (b_{l-1}-b_l) ((n-l-1) -j+1)\right)\leq \sum_{j=0}^{n-1} (n-j)^{\alpha-1}y^j.
\end{split}
\end{equation}
This concludes the proof of \eqref{eqn:Gronwall2}.

\textbf{Step 2:} We prove now the statement of our result in the Riemann-Liouville case.  
Note that from (\ref{eqn:Gronwall2}) we easily get 
\begin{equation}
	\label{eqn:GronRLAux}
	(1-\mu_0)y^n \leq (1-\mu_0) y^0 (n+1)^{\alpha-1} + \sum_{k=0}^{n-1} \left(\mu_0 (n-k+1)^{\alpha-1} +\mu_1 (n-k)^{\alpha-1}\right)y^k +\mu_2(F_1+t_n^{\alpha-1}F_2),
\end{equation}
where we used the assumption \eqref{eqn:GronRLFBound} on boundedness of the  discrete  fractional integral of $F^k$. Since $\tau\leq \tau_0$ implies $0<\mu_0<1$,
\begin{equation}
	y^n \leq y^0 (n+1)^{\alpha-1} + \frac{2\max\{\mu_0,\mu_1\}}{1-\mu_0}\sum_{k=0}^{n-1}  (n-k)^{\alpha-1} y^k + \frac{\mu_2}{1-\mu_0} (F_1+t_n^{\alpha-1}F_2).
\end{equation}
Now, by unravelling definitions of $\mu_i$ we can further write 

\begin{equation}
\begin{split}
	y^n &\leq y^0 \tau^{1-\alpha} t_n^{\alpha-1} + \frac{2\Gamma(2-\alpha)\max\{\lambda_0,\lambda_1\}}{1-\lambda_0\tau_0^\alpha\Gamma(2-\alpha)} \tau^\alpha \sum_{k=0}^{n-1} (n-k)^{\alpha-1} y^k + \frac{\Gamma(\alpha)\Gamma(2-\alpha)}{1-\lambda_0\tau_0^\alpha\Gamma(2-\alpha)} (F_1+t_n^{\alpha-1}F_2)\\
	&\leq (y^0 \tau^{1-\alpha} + M F_2) t_n^{\alpha-1} + \frac{2\max\{\lambda_0,\lambda_1\} M}{\Gamma(\alpha)} \tau^\alpha \sum_{k=0}^{n-1} (n-k)^{\alpha-1} y^k + M F_1,
	\end{split}
\end{equation}
where $M$ is the one defined in the statement of the result.

Now, we can use Proposition \ref{prop:GronwallInt} in order to solve for $y^n$ which immediately brings us to the conclusion for the Riemann-Liouville case. 

\textbf{Step 3:} We prove now the statement of our result in the Caputo case. This   can be proved essentially in the same manner as in the Riemann-Liouville case. Instead of \eqref{eqn:Gronwall1}, here we have  \begin{equation}
\begin{split}
	y^n \leq b_{n-1} y^0 + \sum_{k=1}^{n-1} (b_{n-k-1}-b_{n-k}) y^k + \mu_0 y^n + \mu_1 y^{n-1} + \mu_2 F^n,
	\end{split}
\end{equation}

and from here we prove by induction that
\begin{equation}\label{eqn:GronwallCaputoClaim}
	y^n \leq y^0 + \mu_0 \sum_{k=1}^{n} (n-k+1)^{\alpha-1} y^k+ \mu_1 \sum_{k=0}^{n-1} (n-k)^{\alpha-1} y^k + \mu_2 \sum_{k=1}^{n} (n-k+1)^{\alpha-1} F^k.
\end{equation}

The case $n=1$ is trivial to check since $b_0=1$ and the rest of the terms are the same as in the Riemann-Liouville case. Thus, \eqref{eqn:GronwallCaputoClaim} is proved by induction as before, since 
\[
y^n \leq y^0 \widetilde{S}_{-1} + \mu_0 S_0 + \mu_1 S_1 + \mu_2 S_2,
\]
with $S_0$, $S_1$ and $S_2$ as in the Riemann-Liouville case, and 
\[
\widetilde{S}_{-1}= \left(b_{n-1} + \sum_{k=1}^{n-1} (b_{n-k-1}-b_{n-k})\right) y^0 = b_0y^0= y^0.
\]
From \eqref{eqn:GronwallCaputoClaim}, proceeding as before, it is standard to get

\begin{equation}
	y^n \leq \frac{M}{\Gamma(\alpha)\Gamma(2-\alpha)}y^0 +\frac{2\max\{\lambda_0,\lambda_1\} M}{\Gamma(\alpha)} \tau^\alpha \sum_{k=0}^{n-1} (n-k)^{\alpha-1} y^k + M(F_1+t_n^{\alpha-1}F_2).
\end{equation}
Invoking Proposition \ref{prop:GronwallInt} finishes the proof.  
\end{proof}
An important special case of the above inequality arises when $\lambda_{0}=\lambda_1 = 0$ and $F^n = G$ for all $n$. 
The discrete fractional integral (\ref{eqn:GronRLFBound}) of $F^n$ now becomes
\begin{equation}
\frac{\tau^\alpha}{\Gamma(\alpha)} \sum_{k=0}^{n-1} (n-k)^{\alpha-1} F^{k+1} = G\frac{\tau^\alpha}{\Gamma(\alpha)} \sum_{k=1}^{n} k^{\alpha-1} \leq G\frac{\tau^\alpha}{\Gamma(\alpha)} \int_0^n x^{\alpha-1} \dd x = G\frac{t_n^\alpha}{\Gamma(1+\alpha)},
\end{equation} 
Finally, note that, for all $l\leq n$,
\[
\frac{\tau^\alpha}{\Gamma(\alpha)} \sum_{k=0}^{l-1} (l-k)^{\alpha-1} F^{k+1} \leq G\frac{t_l^\alpha}{\Gamma(1+\alpha)}\leq G\frac{t_n^\alpha}{\Gamma(1+\alpha)}.
\]
We can immediately apply Lemma  \ref{lem:grondif} for every $n$ to obtain the following result:

\begin{cor}\label{cor:GronwallSimple}
 Let $\alpha\in(0,1)$, $\tau>0$, $N\in \N$ and $G>0$. Let also $\{y^n\}_{0\leq n\leq N}$ be a nonnegative sequence. The following hold:
\begin{enumerate}
	\item (\textit{Riemann-Liouville}) If
	$
	^{RL}\partial_\tau^\alpha y^{n} \leq G
	$
	for $1\leq n\leq N$ then
	\begin{equation}\label{eq:RLCor}
		y^n \leq y^0 \tau^{1-\alpha} t_n^{\alpha-1} + \frac{\Gamma(2-\alpha)}{\alpha} t_n^\alpha G, \quad \textup{for all}\quad 0\leq n\leq N.
	\end{equation}
	\item (\textit{Caputo}) If
	$
	^{C}\partial_\tau^\alpha y^{n} \leq G, 
	$
	for  $1\leq n\leq N$ then
	\begin{equation}\label{eq:Gronw2}
		y^n \leq y^0 + \frac{\Gamma(2-\alpha)}{\alpha}t_n^\alpha G, \quad \textup{for all}\quad  0\leq n\leq N.
	\end{equation}
\end{enumerate}
\end{cor}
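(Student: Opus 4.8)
The plan is to obtain Corollary \ref{cor:GronwallSimple} as a direct specialization of Lemma \ref{lem:grondif}. First I would set $\lambda_0=\lambda_1=0$ and take the sequence $F^n\equiv G$; with these choices the differential-type hypothesis \eqref{eq:HypGronRL} of the lemma becomes precisely the assumption $\partial_\tau^\alpha y^n\le G$ of the corollary, for either the Riemann--Liouville or the Caputo discretization. The remaining hypothesis \eqref{eqn:GronRLFBound} on the discrete fractional integral of $F^n$ is verified as in the paragraph preceding the statement: since $F^n$ is constant, $\tfrac{\tau^\alpha}{\Gamma(\alpha)}\sum_{k=0}^{n-1}(n-k)^{\alpha-1}F^{k+1}=G\tfrac{\tau^\alpha}{\Gamma(\alpha)}\sum_{k=1}^{n}k^{\alpha-1}$, and comparing the sum with $\int_0^n x^{\alpha-1}\,\dd x$ (legitimate because $x\mapsto x^{\alpha-1}$ is decreasing) bounds this by $G\,t_n^\alpha/\Gamma(1+\alpha)$. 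Hence one may take $F_2=0$ and $F_1=G\,t_n^\alpha/\Gamma(1+\alpha)$.

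The only point needing a little care is that the bound just obtained depends on $n$, whereas Lemma \ref{lem:grondif} asks for constants $F_1,F_2$ valid for all indices up to a fixed horizon. This is handled by fixing an arbitrary $n\le N$ and applying the lemma on the truncated range $0\le k\le n$: for every $l\le n$ one still has $\tfrac{\tau^\alpha}{\Gamma(\alpha)}\sum_{k=0}^{l-1}(l-k)^{\alpha-1}F^{k+1}\le G\,t_l^\alpha/\Gamma(1+\alpha)\le G\,t_n^\alpha/\Gamma(1+\alpha)$ because $t_l^\alpha$ is nondecreasing in $l$, so $F_1=G\,t_n^\alpha/\Gamma(1+\alpha)$ is admissible on that range. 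Reading the lemma's conclusion at the top index $n$ then yields the stated estimate at $t_n$ rather than at $t_N$.

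Finally I would simplify the constants. Because $\max\{\lambda_0,\lambda_1\}=0$, every Mittag--Leffler factor in \eqref{eq:GrongCap1} and in its Riemann--Liouville counterpart is evaluated at $0$, and with $E_{\alpha,\alpha}(0)=1/\Gamma(\alpha)$, $E_\alpha(0)=1$ and $M=\Gamma(\alpha)\Gamma(2-\alpha)$ (the factor $1-\lambda_0\tau_0^\alpha\Gamma(2-\alpha)$ equals $1$) all growth terms disappear. In the Riemann--Liouville case this leaves $y^n\le y^0\tau^{1-\alpha}t_n^{\alpha-1}+\Gamma(\alpha)\Gamma(2-\alpha)F_1$, and substituting $F_1$ and using $\Gamma(\alpha)\Gamma(2-\alpha)/\Gamma(1+\alpha)=\Gamma(2-\alpha)/\alpha$ (since $\Gamma(1+\alpha)=\alpha\Gamma(\alpha)$) gives \eqref{eq:RLCor}; in the Caputo case the $t_n^{\alpha-1}$ term drops out because $F_2=0$, and the surviving $y^0+\Gamma(\alpha)\Gamma(2-\alpha)F_1$ collapses to \eqref{eq:Gronw2} via the same identity. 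There is no genuine obstacle here — the corollary is a routine specialization of Lemma \ref{lem:grondif} — and the only thing worth being explicit about is the $n$-by-$n$ application that keeps the right-hand side expressed through $t_n$.
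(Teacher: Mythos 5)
Your proposal is correct and follows essentially the same route as the paper: specializing Lemma \ref{lem:grondif} with $\lambda_0=\lambda_1=0$ and $F^n\equiv G$, bounding the discrete fractional integral by $G\,t_n^\alpha/\Gamma(1+\alpha)$ via comparison with $\int_0^n x^{\alpha-1}\,\dd x$, and applying the lemma for each fixed $n$ (which is exactly the paper's ``apply Lemma \ref{lem:grondif} for every $n$'' step). Your explicit simplification of the Mittag--Leffler factors at $0$ and of $M=\Gamma(\alpha)\Gamma(2-\alpha)$ matches the stated constants, so nothing is missing.
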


\section{Technical result}
\begin{lem}\label{lem:tech}
Let $a\geq b>0$ and $\beta\in(0,1]$. Then,
\[
\beta a^{\beta-1}(a-b) \leq a^{\beta}-b^{\beta} \leq a^{\beta-1}(a-b).
\]
\end{lem}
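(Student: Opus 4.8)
The plan is to normalize the two-variable inequality to a one-variable statement and then dispatch each side by an elementary monotonicity argument. Since $a \ge b > 0$, set $t := b/a \in (0,1]$ and divide every term by $a^\beta > 0$; using $a - b = a(1-t)$ one has $a^{\beta-1}(a-b) = a^\beta(1-t)$ and $\beta a^{\beta-1}(a-b) = \beta a^\beta(1-t)$, so the claim is equivalent to
\[
\beta(1-t) \;\le\; 1 - t^\beta \;\le\; 1 - t, \qquad t \in (0,1].
\]

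For the right-hand inequality I would simply note that it rearranges to $t \le t^\beta$, and this holds because $t \le 1$ and $\beta \le 1$ give $t^{\beta - 1} \ge 1$, hence $t^\beta = t \cdot t^{\beta-1} \ge t$.

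For the left-hand inequality I would introduce $g(t) := 1 - t^\beta - \beta(1-t)$ on $(0,1]$ and show $g \ge 0$. One computes $g(1) = 0$ and $g'(t) = -\beta t^{\beta-1} + \beta = \beta\bigl(1 - t^{\beta-1}\bigr)$, which is $\le 0$ on $(0,1]$ because $t^{\beta-1} \ge 1$ there. Thus $g$ is nonincreasing, so $g(t) \ge g(1) = 0$ for all $t \in (0,1]$, which is the desired bound. Alternatively, one may obtain the left inequality directly from the concavity of $f(x) = x^\beta$ on $(0,\infty)$: the tangent-line estimate $f(b) \le f(a) + f'(a)(b-a)$ reads $b^\beta \le a^\beta + \beta a^{\beta-1}(b-a)$, i.e.\ $a^\beta - b^\beta \ge \beta a^{\beta-1}(a-b)$.

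There is no real obstacle here; the only point requiring a moment's attention is that the stated upper bound involves $a^{\beta-1}$ rather than the more familiar tangent slope $\beta b^{\beta-1}$, so the two bounds are not obtained by a single symmetric concavity argument — the reduction to $t = b/a$ together with the elementary fact $t \le t^\beta$ on $[0,1]$ is the clean way to get the upper estimate. The endpoint cases ($\beta = 1$, trivial; $a = b$, both inequalities become $0 \le 0$) are immediate.
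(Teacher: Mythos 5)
Your proof is correct and, after the cosmetic normalization $t=b/a$, it is essentially the paper's argument: your elementary fact $t\le t^\beta$ is exactly the paper's inequality $a^{\beta-1}b\le b^\beta$ in rescaled form, and your monotonicity/tangent-line argument for the lower bound is equivalent to the paper's comparison $\beta a^{\beta-1}(a-b)\le \beta\int_b^a s^{\beta-1}\,\dd s = a^\beta-b^\beta$. No gaps; nothing further is needed.
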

\begin{proof}
Let us prove the right estimate first. Since $a\geq b$ then $a^{\beta-1}\leq b^{\beta-1}$ so that $a^{\beta-1}b\leq b^{\beta}$ and thus $-b^{\beta}\leq -a^{\beta-1}b$. Then
\[
a^{\beta}-b^{\beta} \leq a^{\beta}-a^{\beta-1}b\quad  \Longrightarrow \quad a^{\beta}-b^{\beta} \leq a^{\beta-1}(a-b). 
\]
The right estimate follows simply noting that
\[
\beta a^{\beta-1}(a-b) \leq \beta \int_b^a t^{\beta-1} \dd t= a^{\beta}-b^{\beta}.
\]
\end{proof}

\bibliography{biblio}
\bibliographystyle{plain}	

\end{document}